\numberwithin{equation}{section}
\theoremstyle{plain}
\newtheorem{theorem}{Theorem}[section]
\newtheorem{proposition}[theorem]{Proposition}
\newtheorem{lemma}[theorem]{Lemma}
\theoremstyle{remark}
\newtheorem{remark}[theorem]{Remark}
\newtheorem{example}[theorem]{Example}
\theoremstyle{definition}
\newtheorem*{question*}{Question}
\newcommand{\LL}{\mathcal{L}}
\newcommand{\CC}{\mathcal{C}}
\newcommand{\R}{\mathbb{R}}
\newcommand{\RP}{\mathbb{RP}^1}
\newcommand{\Z}{\mathbb{Z}}
\newcommand{\N}{\mathbb{N}}
\newcommand{\iii}{\mathtt{i}}
\newcommand{\jjj}{\mathtt{j}}
\newcommand{\fii}{\varphi}
\newcommand{\A}{\mathsf{A}}
\newcommand{\Xl}{T_{\lambda}}
\newcommand{\projspan}[1]{\langle#1\rangle}
\renewcommand{\ge}{\geqslant}
\renewcommand{\le}{\leqslant}
\renewcommand{\geq}{\geqslant}
\renewcommand{\leq}{\leqslant}
\DeclareMathOperator{\Bad}{Bad}
\DeclareMathOperator{\dimloc}{dim_{loc}}
\DeclareMathOperator{\udimloc}{\overline{dim}_{loc}}
\DeclareMathOperator{\ldimloc}{\underline{dim}_{loc}}
\DeclareMathOperator{\dimm}{dim_M}
\DeclareMathOperator{\udimm}{\overline{dim}_M}
\DeclareMathOperator{\ldimm}{\underline{dim}_M}
\DeclareMathOperator{\dimh}{dim_H}
\DeclareMathOperator{\diml}{dim_L}
\DeclareMathOperator{\dimaff}{dim_{aff}}
\DeclareMathOperator{\dima}{dim_A}
\DeclareMathOperator{\Tan}{Tan}
\DeclareMathOperator{\dist}{dist}
\DeclareMathOperator{\diam}{diam}
\DeclareMathOperator{\proj}{proj}
\DeclareMathOperator{\rank}{rank}
\DeclareMathOperator{\im}{im}
\DeclareMathOperator{\id}{Id}
\renewcommand{\atop}[2]{\genfrac{}{}{0pt}{}{#1}{#2}}
\def\MarkRightAngle[size=#1](#2,#3,#4){
  \draw[black] ($(#3)!#1!(#2)$) -- ($($(#3)!#1!(#2)$)!#1!90:(#2)$) -- ($(#3)!#1!(#4)$)
}
\begin{document}

\title{Slices of the Takagi function}

\author{Roope Anttila}
\address[Roope Anttila]
        {Research Unit of Mathematical Sciences \\ 
         P.O.\ Box 8000 \\ 
         FI-90014 University of Oulu \\ 
         Finland}
\email{roope.anttila@oulu.fi}

\author{Bal\'azs B\'ar\'any}
\address[Bal\'azs B\'ar\'any]
        {Department of Stochastics \\
        	Institute of Mathematics \\
        	Budapest University of Technology and Economics \\
        	M\H{u}egyetem rkp. 3 \\
        	H-1111 Budapest,
        	Hungary}
\email{balubsheep@gmail.com}

\author{Antti K\"aenm\"aki}
\address[Antti K\"aenm\"aki]
        {Research Unit of Mathematical Sciences \\ 
         P.O.\ Box 8000 \\ 
         FI-90014 University of Oulu \\ 
         Finland}
\email{antti.kaenmaki@oulu.fi}

\thanks{R. Anttila was financially supported by the Magnus Ehrnrooth foundation. B. B\'ar\'any was financially supported by the grants NKFI FK134251, K142169, and the grant NKFI KKP144059 ``Fractal geometry and applications''.}
\subjclass[2000]{Primary 28A80; Secondary 28A75, 26A27.}
\keywords{Takagi function, self-affine set, Marstrand's slicing theorem, Assouad dimension}
\date{\today}

\begin{abstract}
  We show that the Hausdorff dimension of any slice of the graph of the Takagi function is bounded above by the Assouad dimension of the graph minus one, and that the bound is sharp. The result is deduced from a statement on more general self-affine sets, which is of independent interest. We also prove that the Marstrand's slicing theorem on the graph of the Takagi function extends to all slices if and only if the upper pointwise dimension of every projection of the length measure on the $x$-axis lifted to the graph is at least one.
\end{abstract}

\maketitle

\section{Introduction}

The \emph{Takagi function} $T_{\lambda} \colon [0,1] \to \R$ for the parameter $\tfrac12 < \lambda < 1$ is defined by setting
\begin{equation} \label{eq:takagi-def}
  T_{\lambda}(x) = \sum_{n=0}^\infty \lambda^n \dist(2^nx,\Z)
\end{equation}
for all $x \in [0,1]$. In mathematical writing it is customary to distinguish a function from its graph. Notwithstanding, we stick to the definition of a function as a total and univalent binary relation which in our case is convenient notation-wise as then $T_\lambda$ denotes both the function and its graph. The Takagi function, being continuous yet having at no point a finite derivative, is one of the famous examples of ``pathological functions''. For the basic properties of the Takagi function and a summary of recent research the reader is referred to the surveys of Allaart and Kawamura \cite{AllaartKawamura2012} and Lagarias \cite{Lagarias2012}.

Level sets of the Takagi function, i.e.\ the sets of points $x \in \R$ at which $T_\lambda(x)$ equals a given value, have been studied extensively; see \cite{LagariasMaddock2012,LiuLi2015,Allaart2014,Allaart2013,Buczolich2008,Allaart2012,Allaart2012b,LagariasMaddock2011,AmoBhouriDiazFernandez2011}. Such level sets appear as horizontal slices of $T_\lambda$ meaning that they are intersections $T_\lambda \cap (V_0+x)$, where $V_0$ is the $x$-axis and $x \in \R^2$. When $\lambda = \tfrac12$, it has been proven that the Hausdorff dimension of slices with integer slope is at most $\tfrac12$, and the bound is attained by some slice; see \cite{AmoDiazFernandez2013,Maddock2010}. In this paper, we obtain a sharp bound for the Hausdorff dimensions of all slices of $T_\lambda$, when $\tfrac12 < \lambda < 1$, in terms of the Assouad dimension of $T_\lambda$.

The study of the dimensions of slices has a rich history. The classical Marstrand's slicing theorem \cite{Marstrand1954} shows that almost every fiber of a projection does not store more dimension than what is the surplus. We denote the Hausdorff dimension by $\dimh$ and the collection of all lines in $\R^2$ passing through the origin by $\RP$. The slicing theorem states that, given a Borel set $X \subset \R^2$ and $V \in \RP$, we have
\begin{equation} \label{eq:marstrand-slicing}
  \dimh(X \cap (V+x)) \le \max\{0,\dimh(X) - 1\}
\end{equation}
for Lebesgue almost all $x \in V^\bot$. Often when the set $X$ has some additional arithmetic or geometric structure, stronger statements can be made about dimensions of all slices. For example, if $X=A\times B$ where $A$ and $B$ are invariant under the maps $x\mapsto 2x\mod1$ and $x\mapsto3x\mod1$, then the bound in \eqref{eq:marstrand-slicing} holds for all slices, except those in the directions of the coordinate axes. This celebrated result was first conjectured by Furstenberg \cite{Furstenberg1970} and recently proved independently and simultaneously by Shmerkin \cite{Shmerkin2019} and Wu \cite{Wu2019}.

The Takagi function is an example of a self-affine set. For many sets in this class, the Hausdorff dimensions of slices are closely connected to the Assouad dimensions of the sets; see Section \ref{sec:preliminaries} for the relevant definitions. This was first observed by Mackay \cite{Mackay2011}, who expressed the Assouad dimensions of a special class of self-affine sets called Bedford-McMullen carpets in terms of the dimensions of their projections on the $x$-axis and the dimensions of their slices in the direction of the $y$-axis. Algom \cite{Algom} showed that the Minkowski dimension of any slice, which is not in the direction of the coordinate axes, of certain Bedford-McMullen carpets $X$, is bounded above by $\max\{0,\dima(X)-1\}$. Here $\dima(X)$ denotes the Assouad dimension of $X$ and is always bounded from below by the Hausdorff dimension. Recently, B\'ar\'any, K\"aenm\"aki, and Yu \cite[Theorem 1.3]{BaranyKaenmakiYu2021-preprint} showed that a similar phenomenon is also present for certain totally disconnected self-affine sets. In fact, in the class of self-affine sets they consider, the upper bound $\max\{0,\dima(X) - 1\}$ is achieved and there are examples of self-affine sets in this class for which $\dimh(X) < \dima(X)$. However, since the Takagi function is connected, the results in \cite{BaranyKaenmakiYu2021-preprint} do not apply.

Utilizing the self-affinity of the Takagi function, B\'ar\'any, Hochman, and Rapaport \cite[Corollary 7.6]{BaranyHochmanRapaport2019} proved that
\begin{equation} \label{eq:takagi-hausdorff-dim}
  \dimh(T_\lambda) = 2 - \frac{\log\lambda}{\log \frac12} < 2;
\end{equation}
see also Ledrappier \cite{Ledrappier1992}. The Assouad dimension of the Takagi function was studied by Yu \cite{Yu2020} in some special cases. He showed that, if $2$ in the definition of the Takagi function \eqref{eq:takagi-def} is replaced by an integer greater than $3$, there exist parameters for which the Assouad dimension is strictly larger than the Hausdorff dimension. In the online version of the paper, he also conjectured that $\dima(T_\lambda)=2$ for all $\tfrac12 < \lambda < 1$. The following theorem is the first main result of the paper.

\begin{theorem} \label{thm:main1}
  If $T_\lambda$ is the Takagi function, then
  \begin{equation*}
      \max_{\atop{x\in T_\lambda}{V\in\RP}}\dimh(T_\lambda \cap (V+x)) = \dima(T_\lambda)-1 < 1
  \end{equation*}
\end{theorem}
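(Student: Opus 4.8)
The plan is to realize $T_\lambda$ as the self-affine attractor of the two affine maps
$$f_0(x,y) = \bigl(\tfrac12 x,\ \tfrac12 x + \lambda y\bigr), \qquad f_1(x,y) = \bigl(\tfrac12 x + \tfrac12,\ -\tfrac12 x + \lambda y + \tfrac12\bigr),$$
which encode the functional identities $T_\lambda(\tfrac{x}{2}) = \tfrac{x}{2} + \lambda T_\lambda(x)$ and $T_\lambda(\tfrac{x+1}{2}) = \tfrac{1-x}{2} + \lambda T_\lambda(x)$ coming from \eqref{eq:takagi-def}. The common linear part is lower triangular with diagonal entries $\tfrac12$ and $\lambda$: it contracts the horizontal axis by exactly $\tfrac12$ and preserves the vertical axis with the weaker factor $\lambda$. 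Since $\tfrac12 < \lambda$, the ratio of the singular values of the $n$-fold products grows like $(2\lambda)^n \to \infty$, so the system has a dominated splitting, placing $T_\lambda$ among the self-affine sets to which my general argument applies. A decisive structural feature is that $\proj T_\lambda = [0,1]$ has full length.

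For the inequality $\dimh(T_\lambda \cap (V+x)) \le \dima(T_\lambda) - 1$, valid for every $V \in \RP$ and every $x$, I would isolate and prove a general self-affine slicing statement (of independent interest, as announced in the abstract) and specialize it to $T_\lambda$. Its mechanism is that, after using the dynamics to straighten the direction $V$ and exploiting that the horizontal fibres all contract at the single rate $\tfrac12$ while $\proj T_\lambda = [0,1]$, one embeds a bi-Lipschitz copy of the product $(T_\lambda \cap (V+x)) \times [0,1]$ inside a weak tangent of $T_\lambda$; since the Hausdorff dimension of any weak tangent is at most $\dima(T_\lambda)$ and $\dimh(A \times [0,1]) = \dimh A + 1$, this gives $1 + \dimh(T_\lambda \cap (V+x)) \le \dima(T_\lambda)$.

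The reverse inequality, the identification of $\dima(T_\lambda) - 1$, and the attainment of the maximum would all be extracted from a single weak tangent. Writing $\dima(T_\lambda)$ as the largest Hausdorff dimension of a weak tangent and taking a tangent $E$ realising it, I would show that the full horizontal projection together with the dominated splitting forces $E \subseteq S_\ast \times [0,1]$, where $S_\ast$ is an affine image of a genuine slice $T_\lambda \cap (V_\ast + x_\ast)$. This yields $\dima(T_\lambda) = \dimh E \le 1 + \dimh S_\ast = 1 + \dimh(T_\lambda \cap (V_\ast + x_\ast))$, so that, combined with the upper bound of the previous paragraph, the slice $V_\ast + x_\ast$ attains the value $\dima(T_\lambda) - 1$ and the maximum in the theorem is attained rather than merely a supremum. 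The engine behind both bounds is a counting estimate, governed by the subadditive pressure of the singular-value functions, for the number of generation-$n$ cylinders meeting a thin tube around a line of prescribed slope; this count simultaneously controls the slice dimensions and the dimensions of weak tangents and forces them to agree.

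The strict inequality $\dima(T_\lambda) - 1 < 1$, equivalently $\dimh(T_\lambda \cap (V+x)) < 1$ for every slice, is the step I expect to be the main obstacle, since it asserts that no slice carries full dimension and in particular refutes the conjecture $\dima(T_\lambda) = 2$. The plan is to bound the above cylinder count uniformly in the slope and the intercept strictly below the trivial growth rate: the shear terms $\pm\tfrac12 x$ in $f_0$ and $f_1$ keep the generation-$n$ cylinders from aligning along any single line, so a tube of width comparable to the horizontal size of a cylinder can meet only a strictly sub-maximal proportion of the cylinders crossing its horizontal extent. Quantifying this with the gap $2\lambda > 1$ in the dominated splitting should produce a maximal slice dimension $s(\lambda) < 1$, uniform over all $V$ and $x$, whence $\dima(T_\lambda) = 1 + s(\lambda) < 2$.
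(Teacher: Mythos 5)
Your architecture does track the paper's: the same two affine maps, domination from the singular-value gap $(2\lambda)^n$, a general tangent-based identity $\dima(T_\lambda)=1+\max\dimh(\text{slice})$, and a uniform cylinder-counting bound to push the maximum strictly below $1$. But two steps are genuine gaps. First, your claim that ``the full horizontal projection together with the dominated splitting forces $E\subseteq S_\ast\times[0,1]$'' is false as a general mechanism: the paper constructs (its Example 3.3) a dominated self-affine set satisfying the strong open set condition for which $\dima(X)=2$ while every slice in a backward Furstenberg direction has Hausdorff dimension strictly less than $1$, so the tangent-to-slice bound cannot follow from domination and large projections alone. The missing ingredient is a separation hypothesis --- the (weak) bounded neighborhood condition --- which bounds the number of cylinders of comparable height meeting a ball and yields a decomposition of a weak tangent into \emph{finitely many} pieces $T_i$, each mapped by a rank-one affine map $G_i$ into a slice $X\cap(\im(G_i)+y_i)$ with $\im(G_i)\in X_F$; a weak tangent is a finite union of such pieces, not a single product. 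For $T_\lambda$ this condition does hold, precisely because of the single horizontal rate $\tfrac12$ you point out (level-$n$ cylinders sit over distinct dyadic columns), but verifying it is a necessary step your plan omits, and without it both of your tangent arguments have no engine. (Your bi-Lipschitz product embedding for the lower bound is also stronger than needed and than what is proved: the paper instead puts a Frostman measure on the tangent of the slice and applies Marstrand's slicing theorem to the dimension-one fibres in the direction $Y_F=\{V_\infty\}$; for the Takagi function the fibres do straighten to vertical segments, so your version is repairable, but the general statement is obtained by the measure-theoretic route.)

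Second, the step you yourself flag as the main obstacle --- the uniform strictly sub-maximal count --- is stated as an expectation, not proved, and your heuristic (``the shear terms keep the cylinders from aligning'') does not identify which cylinder a given line avoids. The paper's argument here is quite specific and elementary, with no pressure machinery: choosing $n_\lambda$ so that $2(K_\lambda+M_\lambda)\lambda^{n_\lambda}\le 1$, it uses the Mishura--Schied maximum $M_\lambda=\tfrac{1}{3(1-\lambda)}$ and the exact value $T_\lambda(\tfrac12)=\tfrac12$ to run a trichotomy: if a line meets the cylinder $\overline{1}|_{n_\lambda}$, contained in $[0,2^{-n_\lambda}]\times[0,(K_\lambda+M_\lambda)\lambda^{n_\lambda}]$, then either it passes below $(\tfrac12,\tfrac12)$ and misses the cylinder $1\overline{2}|_{n_\lambda}$, or its slope is at least $1-2(K_\lambda+M_\lambda)\lambda^{n_\lambda}>0$ and it passes above the cylinder $\overline{2}|_{n_\lambda}$; hence at least one of the $2^{n_\lambda}$ cylinders is avoided. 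The induction $\#\Sigma_{kn_\lambda}(V+x)\le(2^{n_\lambda}-1)^k$ then needs the self-similarity of the slicing problem: $\varphi_\iii^{-1}$ maps the line to a line whose slope $t_\iii=\sum_{k}(-1)^{i_k}2^{-k}\lambda^{-k}+(2\lambda)^{-|\iii|}t$ stays in a controlled interval (this is exactly where $X_F=\{V_t\colon |t|\le K_\lambda\}$ enters). Note also that counting cylinders meeting the \emph{line} suffices for the Hausdorff bound, since the pieces $\varphi_\iii(T_\lambda)\cap(V_t+x)$ have diameter at most $2^{-n}\sqrt{t^2+1}$; your proposed tube-counting is only needed for the measure statements of the paper's Theorem B and is substantially harder. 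Until an explicit geometric input of this kind is supplied, the strict inequality $\dima(T_\lambda)-1<1$ --- and with it the refutation of the conjecture $\dima(T_\lambda)=2$ --- remains unproven in your outline.
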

This theorem is based on a result for a more general class of self-affine sets, Theorem \ref{thm:dimasosc}, which generalizes the results in \cite[\S 5]{BaranyKaenmakiRossi2021} and \cite[\S 5]{BaranyKaenmakiYu2021-preprint}.

As our second main result, we investigate when the bound \eqref{eq:marstrand-slicing} of the Marstrand's slicing theorem can be extended to all slices of the Takagi function. By Theorem \ref{thm:main1}, this happens precisely when $\dima(T_\lambda)=\dimh(T_\lambda)$. For a given $t \in \R$, let $\proj_t \colon \R^2 \to \R$, $\proj_t(x_1,x_2) = (x_1,x_2) \cdot (t,1)$. The pushforward of a measure $\mu$ is denoted by $f_*\mu$ whenever $f$ is a measurable mapping. We let $\nu = (\id,T_\lambda)_*\LL^1$ be the Lebesgue measure $\LL^1$ lifted to the Takagi function $T_\lambda$ and $\ldimloc(\mu,x)$ be the lower pointwise dimension of a measure $\mu$ at $x$. It follows from \eqref{eq:marstrand-slicing} and \eqref{eq:takagi-hausdorff-dim}, that
\begin{equation*}
  \ldimloc({\proj_t}_*\nu,\proj_t(x)) \ge 1
\end{equation*}
for $\nu$-almost all $x \in T_\lambda$ and Lebesgue almost all $t \in \R$. The following theorem is the second main result of the paper. We show that if the above lower bound holds for all $x$ and $t$, then the Marstrand's slicing theorem \eqref{eq:marstrand-slicing} is extended to all slices.

\begin{theorem} \label{thm:main2}
  If $T_\lambda$ is the Takagi function and $\nu = (\id,T_\lambda)_*\LL^1$ is the Lebesgue measure $\LL^1$ lifted to the Takagi function $T_\lambda$, then
  \begin{equation*}
      \max_{\atop{x\in T_\lambda}{V\in\RP}}\dimh(T_\lambda \cap (V+x))= 1 - \frac{\log\lambda}{\log \frac12}
  \end{equation*}
  if and only if
  \begin{equation*}
      \dimloc({\proj_t}_*\nu,\proj_t(x))\geq 1
  \end{equation*}
  for all $x \in T_\lambda$ and $t \in \R$, where $\dimloc$ denotes either the lower or the upper pointwise dimension.
\end{theorem}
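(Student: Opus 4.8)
The plan is to reduce the claimed equivalence to the single equality $\dima(T_\lambda) = \dimh(T_\lambda)$ and then to read this equality off a formula expressing the Assouad dimension through the local dimensions of the projected measures $(\proj_{V^\perp})_*\nu$. First I would record, using \eqref{eq:takagi-hausdorff-dim}, that $\dimh(T_\lambda) - 1 = 1 - \frac{\log\lambda}{\log\frac12}$, while Theorem \ref{thm:main1} gives $\max_{x,V}\dimh(T_\lambda \cap (V+x)) = \dima(T_\lambda) - 1$. Hence the displayed equality in the statement is equivalent to $\dima(T_\lambda) = \dimh(T_\lambda)$, and it suffices to characterise this equality through the measures $(\proj_{V^\perp})_*\nu$.

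The engine is Theorem \ref{thm:dimasosc}. I would apply it to the self-affine set $T_\lambda$ together with the measure $\nu$, which projects to $\LL^1$ on the $x$-axis and is exact dimensional with $\dim\nu = \dimh(T_\lambda)$, to obtain a formula of the shape
\[
  \dima(T_\lambda) = \dimh(T_\lambda) + 1 - \inf_{V \in X_F}\;\inf_{x \in T_\lambda}\ldimloc\bigl((\proj_{V^\perp})_*\nu,\proj_{V^\perp}(x)\bigr).
\]
The infimum over directions is restricted to the compact set $X_F$ of slicing directions generated by the linear parts of the defining maps, since the weak tangents realising $\dima(T_\lambda)$ split, in precisely these directions, as a product of the full one-dimensional $x$-axis projection with a fiber whose size is controlled by the concentration of $(\proj_{V^\perp})_*\nu$.

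Granting this formula, the version of the statement with $\dimloc = \ldimloc$ is immediate. Indeed, since $\dima(T_\lambda) \ge \dimh(T_\lambda)$ always, the formula forces $\inf_{V \in X_F}\inf_{x}\ldimloc(\cdots) \le 1$. Therefore $\dima(T_\lambda) = \dimh(T_\lambda)$ holds if and only if this infimum equals $1$, and by the previous bound this is the same as requiring $\ldimloc\bigl((\proj_{V^\perp})_*\nu,\proj_{V^\perp}(x)\bigr) \ge 1$ for all $x \in T_\lambda$ and $V \in X_F$.

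It remains to treat the upper pointwise dimension, and this is where I expect the main obstacle. The lower condition trivially implies the upper one because $\ldimloc \le \udimloc$, so the real point is that the upper condition already forces the equality of dimensions; equivalently, that the infimum in the displayed formula is unchanged upon replacing $\ldimloc$ by $\udimloc$, that is,
\[
  \inf_{V \in X_F}\inf_{x}\ldimloc\bigl((\proj_{V^\perp})_*\nu,\cdot\bigr) = \inf_{V \in X_F}\inf_{x}\udimloc\bigl((\proj_{V^\perp})_*\nu,\cdot\bigr).
\]
The difficulty is that liminf and limsup local dimensions may genuinely differ at individual points, so one cannot equate them pointwise. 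To equate their infima I would exploit the self-affinity of $\nu$: its projections inherit a dynamical self-similarity along the symbolic coding, so the extremal local dimension admits a subadditive, thermodynamic description that does not distinguish the lower from the upper scale behaviour. Concretely, using compactness of $X_F$ and upper semicontinuity of $V \mapsto \inf_x \ldimloc$, the infimum is attained along a tangent measure that is itself self-affine and exact dimensional, whence its lower and upper local dimensions coincide and the two infima above agree. This identifies the upper condition with the lower one and completes the equivalence for both choices of $\dimloc$.
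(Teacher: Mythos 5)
Your opening reduction is correct and matches the paper: by \eqref{eq:takagi-hausdorff-dim} and Theorem \ref{thm:main1}, the slicing equality is equivalent to $\dima(T_\lambda)=\dimh(T_\lambda)$. But the engine of your argument is a genuine gap. Theorem \ref{thm:dimasosc} is a statement purely about \emph{sets}; it contains no measure, and no amount of ``applying it together with $\nu$'' produces a formula relating $\dima(T_\lambda)$ to $\inf_{V\in X_F}\inf_x \ldimloc((\proj_{V^\perp})_*\nu,\proj_{V^\perp}(x))$. That relation is precisely the hard content of the theorem, and the paper has to prove it from scratch as Proposition \ref{prop:weak-dimension-conservation}: for every $x\in\Xl$ and $V\in\RP\setminus\{V_\infty\}$ one has the exact pointwise identities pairing $\udimloc$ of the projected measure with $\ldimm$ of the slice, and $\ldimloc$ with $\udimm$. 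The delicate direction is the upper bound on $\nu([V+x]_{c\lambda^n})$: cylinders $\fii_\iii(\Xl)$, $\iii\in\Sigma_n$, can come $c\lambda^n$-close to the line $V+x$ without meeting it, and a priori their number could swamp $\#\Sigma_n(V+x)$. Controlling these ``bad words'' is Lemma \ref{lemma:few-bad}, whose proof (via Lemma \ref{lemma:cylinder-bound} and the geometric Lemmas \ref{lemma:mishura-schied}--\ref{lemma:crux} analysing $T_\lambda$ near its maximizers $\frac13,\frac23$) occupies all of Subsection \ref{subsec:crux}. Your proposal never confronts this issue. As a side remark, your formula is also off by a scaling factor even granting the mechanism: the correct relation implied by the paper is $\dima(T_\lambda)=\dimh(T_\lambda)+\frac{\log\lambda}{\log\frac12}\bigl(1-\inf_{V\in X_F}\inf_x\ldimloc\bigr)$, since one dyadic step in the count $\#\Sigma_n(V+x)$ corresponds to scale $\lambda^n$, not $2^{-n}$; this does not affect the qualitative equivalence, but it signals that the formula is not a formal consequence of anything you cite.

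Your treatment of the upper pointwise dimension is also not a proof. Tangent measures of self-affine measures need not be self-affine, the claimed semicontinuity of $V\mapsto\inf_x\ldimloc$ is not established, and exact dimensionality of some limit object would not equate $\liminf$ and $\limsup$ local dimensions of the original measure at extremal points. Interestingly, the equality of the two infima that you want is \emph{true}, but for a reason you never invoke: Theorem \ref{prop:takagi-assouad-formula} gives the maximal slice dimension simultaneously for Hausdorff and Assouad dimension, so the chain $\dimh\le\ldimm\le\udimm\le\dima$ of slice dimensions collapses at the maximum, and Proposition \ref{prop:weak-dimension-conservation} translates this into $\inf\ldimloc=\inf\udimloc$ over $X_F$. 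The paper exploits exactly this asymmetry to close a cycle of implications in Theorem \ref{thm:equiv-marstrand}: $\udimloc\ge1$ everywhere $\Rightarrow$ the slicing equality (via the $\udimloc$--$\ldimm$ identity) $\Rightarrow$ $\ldimloc\ge1$ everywhere (via the $\ldimloc$--$\udimm$ identity and the Assouad statement for slices) $\Rightarrow$ $\udimloc\ge1$ everywhere, which yields the theorem for both choices of $\dimloc$ without ever comparing $\liminf$ and $\limsup$ pointwise. In short: your reduction and the shape of your target formula are reasonable, but the two pillars carrying the proof --- the dimension-conservation identity and the lower/upper equivalence --- are asserted rather than proved, and the cited results do not supply them.
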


We remark that
\begin{equation*}
  \ldimloc({\proj_t}_*\nu,\proj_t(x))\geq 1
\end{equation*}
holds for all $x \in T_\lambda$ and $t \in \R$ if and only if the $L^q$-dimension of the measure ${\proj_t}_*\nu$ equals to one for all $t \in \R$ and $q>0$, i.e.,
\begin{equation*}
  \inf_{t \in \R}\liminf_{r \downarrow 0}\min_{x\in T_{\lambda}}\frac{\log({\proj_t}_*\nu(B(\proj_t(x),r))}{\log r}=1.
\end{equation*}
We refer to the definition and basic properties of the $L^q$-dimension for \cite[\S 1.3]{Shmerkin2019survey}, and leave the proof of the above fact as an exercise to the interested reader.

The rest of the paper is organized as follows. In Section \ref{sec:preliminaries}, we recall some basic results in dimension theory and establish the general setting of self-affine sets we will be working with. The results in the general setting are presented in Sections \ref{sec:tangent-decompo} and \ref{sec:self-affine-large}. We will then specialize to the Takagi function and prove Theorem \ref{thm:main1} in Section \ref{sec:proof-main1} and Theorem \ref{thm:main2} in Section \ref{sec:proof-main2}.

\section{Notation and preliminaries}\label{sec:preliminaries}

\subsection{Dimensions and weak tangents}
Let us briefly recall definitions of some of the basic notions of dimension used in fractal geometry. The \emph{Hausdorff dimension} of a set $X\subset \R^2$ is
\begin{align*}
    \dimh(X)= \inf \{s>0\colon &\text{for every } \varepsilon>0 \text{ there is } \{U_i\}_{i\in\N} \text{ such} \\ &\text{that } X\subset \bigcup_{i\in\N}U_i\text{ and }\sum_{i\in\N}\diam(U_i)^s<\varepsilon\}.
\end{align*}
The \emph{lower and upper pointwise dimensions} of a Borel measure $\mu$ at $x \in \R^2$ are
\begin{align*}
\ldimloc(\mu,x) &= \limsup_{r \downarrow 0} \frac{\log \mu(B(x,r))}{\log r}, \\
\udimloc(\mu,x) &= \liminf_{r \downarrow 0} \frac{\log \mu(B(x,r))}{\log r},
\end{align*}
respectively. We assume familiarity with the basic properties of the Hausdorff dimension and pointwise dimensions, and how they are connected; see for example \cite{Mattila1995,Falconer1997}. If $X$ is bounded, then the \emph{$r$-covering number} of $X$,
\begin{equation*}
  N_r(X) = \min\{ k \in \N \colon X \subset \bigcup_{i=1}^k B(x_i,r) \text{ for some } x_1,\ldots,x_k \in \R^2 \},
\end{equation*}
is the least number of closed balls of radius $r>0$ needed to cover $X$. The \emph{lower and upper Minkowski dimensions} of a bounded set $X\subset \R^2$ are
\begin{align*}
    \ldimm(X)&=\liminf_{r \downarrow 0}\frac{\log N_{r}(X)}{-\log r}, \\
    \udimm(X)&=\limsup_{r\downarrow 0}\frac{\log N_{r}(X)}{-\log r},
\end{align*}
respectively. In the case that the limit exists, it is denoted by $\dimm(X)$ and called the \emph{Minkowski dimension} of $X$. The \emph{Assouad dimension} of $X \subset \R^2$ is
\begin{align*}
    \dima(X)=\inf\{s>0\colon &\text{there exists } C>0 \text{ such that} \\ 
    &\text{for every }x\in X \text{ and } 0<r<R \\
    &\text{it holds that } N_{r}(X\cap B(x,R))\leq C(\tfrac{R}{r})^s\}.
\end{align*}
The Assouad dimension is designed to capture the extremal scaling behaviour of the set by quantifying the size of the least doubling parts of the set in question. The basic inequality we will use repeatedly is
\begin{equation*}
    \dimh(X)\leq\ldimm(X)\leq \udimm(X)\leq\dima(X)
\end{equation*}
for all bounded sets $X\subset \R^2$. For the proof of this and other basic properties of the Assouad dimension, we refer to \cite{Fraser14}.

The concept of weak tangents has proven to be very useful in the study of the Assouad dimension. Let $X$ be a compact subset of $\R^2$. For $x\in X$ and $r>0$ we denote by $M_{x,r}\colon \R^2\to\R^2$ the linear map
\begin{equation*}
    M_{x,r}(y)=\frac{y-x}{r}.
\end{equation*}
Note that $M_{x,r}(B(x,r))=B(0,1)$. A set $T$ which intersects the interior of $B(0,1)$ is called a \emph{weak tangent} of $X$ if there are a sequence $(x_n)_{n\in\N}$ of points in $X$ and a sequence $(r_n)_{n\in\N}$ of positive real numbers converging to $0$ such that
\begin{equation*}
    M_{x_n,r_n}(X)\cap B(0,1)\to T,
\end{equation*}
in Hausdorff distance. The collection of all weak tangents of $X$ is denoted by $\Tan(X)$. It is easy to see that a dimension of a weak tangent is a lower bound for the Assouad dimension of $X \subset \R^2$, i.e.\ $\dima(X)\geq \dima(T)$ for all $T\in\Tan(X)$; see e.g.\ \cite[Theorem 5.1.2]{Fraser2020}. Käenmäki, Ojala, and Rossi \cite[Proposition 5.7]{KOR} proved the following stronger result, which shows that the Assouad dimension of a compact set is realized by the maximal Hausdorff dimension of its weak tangents.

\begin{lemma}\label{lemma:kaenmakiojalarossi}
  If $X\subset \R^2$ is compact, then $\dima(X)=\max\{\dimh(T)\colon T\in\Tan(X)\}$.
\end{lemma}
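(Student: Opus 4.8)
The inequality $\dima(X)\ge\max\{\dimh(T):T\in\Tan(X)\}$ is the routine half. The excerpt already records $\dima(X)\ge\dima(T)$ for every $T\in\Tan(X)$, and $\dimh(T)\le\dima(T)$ always, so every weak tangent has Hausdorff dimension at most $\dima(X)$. The entire content is therefore to produce a single weak tangent $T$ with $\dimh(T)\ge\dima(X)$: this gives the reverse inequality and simultaneously shows that the supremum is attained at $T$, so that it is a maximum.

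To build such a tangent I would blow up at places where the Assouad covering estimate fails and carry a Frostman measure along. Write $s=\dima(X)$ and fix a small $\varepsilon>0$. By the definition of $\dima$ there is no constant $C$ for which $N_r(X\cap B(x,R))\le C(R/r)^{s-\varepsilon}$ holds at all $x,r,R$; moreover, since $X\subset\R^2$ gives $N_r(X\cap B(x,R))\le C'(R/r)^2$ and $s-\varepsilon<2$, the estimate can in fact only fail for arbitrarily large ratios $R/r$. Hence for each $m$ we may pick $x_0\in X$ and $0<r<R$ with $R/r=2^m$ and $N_r(X\cap B(x_0,R))\ge(R/r)^{s-\varepsilon}$.

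The obstruction is that a covering lower bound at the single deep scale $r$ controls neither the intermediate scales nor, consequently, the Hausdorff dimension of the Hausdorff limit of the rescaled sets, since the mass could concentrate. I would recover multiscale regularity by a running-minimum pigeonhole. Setting $f(j)=\log_2 N_{R2^{-j}}(X\cap B(x_0,R))$ for $0\le j\le m$, the increments of $f$ are bounded (ambient dimension two) and $f(m)\ge(s-\varepsilon)m$, so $h(j)=f(j)-(s-2\varepsilon)j$ has bounded increments yet $h(m)\ge\varepsilon m$; if $p^*$ minimises $h$ then $f(j)-f(p^*)\ge(s-2\varepsilon)(j-p^*)$ for all $j\in[p^*,m]$, and the window length $m-p^*$ is a fixed proportion of $m$, hence arbitrarily large. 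On this window the covering number grows at rate at least $s-2\varepsilon$ at every scale, which is exactly what is needed to spread a probability measure whose rescaling to $B(0,1)$ obeys a Frostman bound of exponent $s-2\varepsilon$ down to the scale $r/R\to0$. Passing to Blaschke limits of the rescaled sets and weak-$*$ limits of the measures along a subsequence then yields a weak tangent $T$ carrying a limit measure $\mu$; using closed balls and a vanishing inflation of the radius, the Frostman bound survives to all scales $\rho\in(0,1]$, so the mass distribution principle gives $\dimh(T)\ge\dimh(\mu)\ge s-2\varepsilon$. A diagonal argument letting $\varepsilon\downarrow0$ (and the lower cutoff $\downarrow0$) along one blow-up sequence then produces a single tangent with $\dimh\ge s$.

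The step I expect to be genuinely delicate is reconciling the two competing requirements above: the running-minimum argument controls the covering number of the whole ball $B(x_0,R)$ across the window, whereas a Frostman measure that actually survives a blow-up around a single point requires this multiscale growth to persist after localising into individual sub-balls. Closing this gap calls for a careful recursive selection of sub-balls rather than a single pass, and the bookkeeping needed to keep the Frostman exponent and the vanishing lower cutoff under control through the weak-$*$ limit is where the real work lies.
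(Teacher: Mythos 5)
The paper does not actually prove this lemma: it is quoted verbatim from K\"aenm\"aki, Ojala, and Rossi \cite[Proposition 5.7]{KOR}, so the comparison here is against that known proof, whose overall architecture (single-scale failure of the Assouad estimate, pigeonholing to a window of scales with uniform covering growth, a Frostman measure carried through the blow-up, the mass distribution principle, and a final limiting argument to turn the supremum into an attained maximum) your sketch reproduces faithfully. The easy half and the reduction to arbitrarily large ratios $R/r$ are correct as you state them, and the running-minimum trick for $h(j)=f(j)-(s-2\varepsilon)j$ is exactly the standard device.

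However, as a proof the proposal has a genuine gap, and it sits precisely where you flag it. The window estimate $f(j)-f(p^*)\geq(s-2\varepsilon)(j-p^*)$ is a \emph{global} lower bound on covering numbers of the whole ball $B(x_0,R2^{-p^*})$, and such bounds are compatible with total concentration: at each scale the required number of balls may exist while almost all of the deep-scale balls cluster inside a single intermediate-scale ball, in which case the naive normalized counting measure violates any Frostman bound with exponent near $s$. Repairing this is not bookkeeping but the core of the KOR argument: one must run a recursive sub-ball selection (a Furstenberg/Bishop--Peres-style tree construction, re-applying the pigeonhole \emph{inside} each selected sub-ball) to manufacture branching lower bounds at every node, and that construction is named but not carried out in your proposal. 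A second, smaller gap is the closing diagonalization: Hausdorff dimension is not upper semicontinuous under Hausdorff convergence, so a limit of tangents $T_\varepsilon$ with $\dimh(T_\varepsilon)\geq s-2\varepsilon$ need not have dimension $s$. One must either arrange Frostman constants uniform in $\varepsilon$ so the weak-$*$ limit measure obeys the exponent-$s$ bound, or work with the Hausdorff content $\mathcal{H}^s_\infty$, which \emph{is} upper semicontinuous along Hausdorff limits, together with the fact that a Hausdorff limit of weak tangents is again a weak tangent (itself a diagonal lemma needing proof). As written, ``a diagonal argument produces a single tangent with $\dimh\geq s$'' asserts the conclusion of these steps rather than supplying them.
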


The result introduces a way to obtain an upper bound for the Assouad dimension by bounding the Hausdorff dimension of every weak tangent.

\subsection{Real projective line and matrices}
Define an equivalence relation $\sim$ on $\R^2\setminus\{0\}$ by setting $v\sim w$ if and only if $v=cw$ for some $c\in\R$. Denote the equivalence class of $v \in \R^2\setminus \{0\}$ under this relation by $\projspan{ v}$. An elementary observation is that for any $0\ne c\in\R$ and $v\in \R^2\setminus\{0\}$ we have $\projspan{ cv} =\projspan{ v}$. Geometrically, $\projspan{ v}= \{w\in\R^2\colon w=cv \text{ and }c\in\R\}\subset \R^2$ is a line in $\R^2$ in the direction of $v$ passing through the origin. The \emph{real projective line} is $\RP= \{\projspan{ v}\colon v\in\R^2\setminus\{0\}\}$. An element of $\RP$ is called a \emph{line}. If the representative of an element of $\RP$ is left implicit, we use capital letters such as $V$ or $W$ to refer to the element. We let $\sphericalangle\colon \RP\to\R$ denote the metric on $\RP$ given by
\begin{equation*}
    \sphericalangle(\projspan{v},\projspan{w})=\arccos\left(\frac{|v\cdot w|}{\| v\|\| w\|}\right)=\arcsin\left(\frac{\|v\wedge w\|}{\|v\|\|w\|}\right),
\end{equation*}
where $v\cdot w$ and $v\wedge w$ denote the inner product and exterior product of the vectors $v$ and $w$, respectively. In other words, the distance between two lines is given by the smaller of the angles between them. A ball in this metric is called a \emph{projective interval}. With the topology induced by the metric, the map $v\mapsto \projspan{ v}$ from $\R^2\setminus\{0\}$ to $\RP$ is continuous.

The group of invertible $2\times 2$ matrices is denoted by $GL_2(\R)$. A matrix $A\in GL_2(\R)$ induces an action on $\RP$ by
\begin{equation*}
    A\projspan{ v} = \projspan{ Av}.
\end{equation*}
For any $V\in\RP$, we denote by $\proj_{V}\colon \R^2\to V$ the \emph{orthogonal projection} onto the subspace $V$, that is, $\proj_V$ is the unique linear map satisfying $\proj_V|_V=\mathrm{Id}|_V$ and $\ker(\proj_V)=V^{\perp}$. It is easy to see, consult e.g.\ \cite[Lemma 2.1]{KaenmakiNissinen-preprint}, that a rank one $2\times 2$ matrix $A$ is bi-Lipschitz equivalent to $\proj_{\ker(A)^{\perp}}$.

The \emph{singular values} $\alpha_1(A)$ and $\alpha_2(A)$ of a matrix $A\in GL_2(\R)$ are the square roots of the non-negative eigenvalues of the positive definite matrix $A^{\top}A$, ordered so that $\alpha_1(A)\geq \alpha_2(A)$. Note that $\alpha_1(A)$ and $\alpha_2(A)$ are the lengths of the semiaxes of the ellipse $A(B(0,1))$. If $A \in GL_2(\R)$ is such that $\alpha_1(A)>\alpha_2(A)$, then we let $\eta_1(A)$ be one of the two unit eigenvectors of $A^{\top}A$ corresponding to the eigenvalue $\alpha_1(A)^2$. If $\alpha_1(A)=\alpha_2(A)$, then we write $\eta_1(A)=S^1=\{x\in\R^2\colon|x|=1\}$. Observe that $\alpha_1(A)=\|A\|=\|A|\langle\eta_1(A)\rangle\|$, $\alpha_2(A)=\|A^{-1}\|^{-1}=\|A^{-1}|\langle\eta_1(A^{-1})\rangle\|^{-1}$, and $\alpha_1(A)\alpha_2(A)=|\det(A)|$.

\subsection{Self-affine set and shift space}
An \emph{iterated function system (IFS)} is a finite tuple of contractive maps $\Phi=(\varphi_1,\ldots,\fii_N)$ acting on $\R^2$. By a classical result of Hutchinson \cite{Hutchinson1981}, $\Phi$ admits a unique non-empty compact set, denoted by $X$, satisfying
\begin{equation*}
    X=\bigcup_{i=1}^N\varphi_i(X).
\end{equation*}
We call $X$ the \emph{limit set} of $\Phi$. We say that $\Phi$ is an \emph{affine IFS} if the maps $\varphi_i$ are affine, i.e.\ $\varphi_i(x)=A_ix+b_i$, where $A_i\in GL_2(\R)$ and $b_i\in\R^2$. In this case, the corresponding limit set is called a \emph{self-affine set}. We use the convention that whenever we speak about a self-affine set $X$, then it is automatically accompanied with a tuple of affine maps which defines it. A self-affine set is said to satisfy the \emph{strong separation condition (SSC)} if $\varphi_i(X)\cap \varphi_j(X) = \emptyset$ for all $i\ne j$, and the \emph{strong open set condition (SOSC)} if there exists an open set $U$ such that $X\cap U\ne\emptyset$, $\varphi_i(U)\subset U$ for all $i\in\{1,\ldots,N\}$, and $\varphi_i(U)\cap \varphi_j(U) = \emptyset$ whenever $i\ne j$.

Given an IFS, we consider the symbolic representation of the limit set $X$ as follows. Let $\Sigma =\{1,\ldots,N\}^{\N}$ denote the collection of all \emph{infinite words} obtained by concatenating digits in $\{1,\ldots,N\}$. Similarly, $\Sigma_n=\{1,\ldots,N\}^n$ is the set of \emph{finite words of length} $n \in \N$, and $\Sigma_*=\bigcup_{n\in\N}\Sigma_n$ is the set of finite words of any length. Given $\iii=i_1i_2\cdots\in\Sigma$, we define $\iii|_n=i_1\cdots i_n$ to be the restriction of $\iii$ to its first $n$ indices, and given $\iii=i_1\cdots i_n\in\Sigma_n$, let $\iii^-=\iii|_{n-1}=i_1\cdots i_{n-1}\in\Sigma_{n-1}$ and $\overleftarrow{\iii}=i_n\cdots i_1$ be the word obtained from $\iii$ by reversing the order of its digits. The concatenation of two words $\iii\in\Sigma_*$ and $\jjj\in\Sigma_*\cup\Sigma$ is denoted by $\iii\jjj$. Given $\iii\in\Sigma_*$, the infinite word obtained by concatenating $\iii$ with itself infinitely many times is denoted by $\overline{\iii}$, that is, $\overline{\iii}=\iii\iii\cdots$. For two finite or infinite words $\iii$ and $\jjj$, their longest common prefix is denoted by $\iii\wedge\jjj$, and the length of a word $\iii$ is denoted by $|\iii|$. We define $\sigma\colon \Sigma\to\Sigma$ by setting $\sigma\iii=\sigma(\iii)=i_2i_3\cdots$ for all $\iii=i_1i_2\cdots\in\Sigma$, and call it the \emph{left shift}. Given $n\in\N$ and $\iii\in\Sigma_n$, we define the \emph{cylinder set} by $[\iii]=\{\jjj\in\Sigma\colon \jjj|_n=\iii\}$. The \emph{shift space} $\Sigma$ is a compact topological space in the topology whose base is the collection of all cylinder sets. Alternatively, a metric $\varrho$ on $\Sigma$ defined by
\begin{equation*}
    \varrho(\iii,\jjj)=2^{-|\iii\wedge\jjj|},
\end{equation*}
with the interpretation that $2^{-\infty}=0$, induces the same topology as the open balls in this metric are precisely the cylinder sets. It is also worth pointing out that the cylinder sets are open and closed in this topology and generate the Borel $\sigma$-algebra. A map $f\colon \Sigma\to M$, where $(M,d)$ is a metric space, is \emph{H\"older continuous}, if there are constants $C,\alpha>0$ such that
\begin{equation*}
    d(f(\iii),f(\jjj))\leq C\alpha^{|\iii\wedge\jjj|},
\end{equation*}
for all $\iii,\jjj\in\Sigma$. Finally, for a given IFS $(\fii_1,\ldots,\fii_N)$ and its limit set $X$, we define the \emph{canonical projection} $\pi \colon \Sigma \to X$ by setting
\begin{equation*}
  \pi\iii = \pi(\iii) = \lim_{n \to \infty} \fii_{i_1} \circ \cdots \circ \fii_{i_n}(\bar{0})
\end{equation*}
for all $\iii = i_1i_2\cdots \in \Sigma$, where $\bar{0}=(0,0)$. It is evident that $\pi$ is H\"older continuous.

\subsection{Semigroup and domination}
Understanding the semigroup generated by $\A = (A_1,\ldots,A_N)\in GL_2(\R)^N$ is crucial in the study of self-affine sets. In this context, it is rather standard practise to use $\Sigma_*$ to index the elements in the semigroup. Indeed, we write
\begin{equation*}
  A_{\iii}=A_{i_1}\cdots A_{i_n}
\end{equation*}
for all $\iii=i_1\cdots i_n \in \Sigma_n$ and $n \in \N$. Our standing assumption is that $\A$ is \emph{dominated}, that is, there exist constants $C>0$ and $0<\tau<1$ such that
\begin{equation} \label{eq:domination-def}
    \alpha_2(A_{\iii})\leq C\tau^{|\iii|}\alpha_1(A_{\iii})
\end{equation}
for all $\iii\in\Sigma_*$. Domination ensures that when iteratively applying the matrices in $\A$ to the unit ball, the resulting ellipses get thinner and thinner at an exponential rate. We say that a self-affine set $X$ is dominated if the tuple consisting of the linear parts of the maps in the associated affine IFS is. A proper subset $\CC\subset \RP$ is called a \emph{multicone} if it is a finite union of closed projective intervals. A multicone $\CC\subset \RP$ is \emph{strongly invariant} for $\A$ if $A_i\CC\subset \CC^{\circ}$ for all $i\in\{1,\ldots,N\}$, where $\CC^{\circ}$ denotes the interior of $\CC$. By \cite[Theorem B]{BochiGourmelon2009}, $\A$ admits a strongly invariant multicone if and only if $\A$ is dominated. It is a simple fact that if $\CC$ is a strongly invariant multicone for $\A$, then $\overline{\RP\setminus \CC}$ is a strongly invariant multicone for $\A^{-1}= (A_1^{-1},\ldots,A_{N}^{-1})$. Write
\begin{align*}
    A_{\overleftarrow{\iii}}^{\top}&=(A_{\overleftarrow{\iii}})^{\top}=A_{i_1}^{\top}\cdots A_{i_n}^{\top},\\
    A_{\overleftarrow{\iii}}^{-1}&=(A_{\overleftarrow{\iii}})^{-1}=A_{i_1}^{-1}\cdots A_{i_n}^{-1},
\end{align*}
and let
\begin{align*}
    \vartheta_1(\iii)&=\projspan{ A_{\iii}\eta_1(A_{\iii})},\\
    \vartheta_2(\iii)&=\projspan{ A_{\overleftarrow{\iii}}^{-1}\eta_1(A_{\overleftarrow{\iii}}^{-1})},
\end{align*}
for all $\iii\in\Sigma_n$ and $n\in\N$. The geometric interpretation is that $\vartheta_1(\iii)$ and $\vartheta_2(\iii)$ correspond to the orientation of the principal semiaxis of the ellipses $A_{\iii}(B(0,1))$ and $A_{\overleftarrow{\iii}}^{-1}(B(0,1))$, respectively. We also define
\begin{equation*}
  \overline{\vartheta}_k(\iii) = \lim_{n\to\infty}\vartheta_k(\iii|_n)
\end{equation*}
for all $\iii \in \Sigma$ and $k \in \{1,2\}$ whenever the limit exists. The following lemma guarantees that under domination, the limit exists at every point and therefore, we have defined a map $\overline{\vartheta}_k \colon \Sigma \to \RP$.

\begin{lemma}\label{lemma:rossi}
  If $\A=(A_1,\ldots,A_N) \in GL_2(\R)^N$ is dominated and $\CC \subset \RP$ is a strongly invariant multicone for $\A$, then, for $k\in\{1,2\}$,
  \begin{enumerate}
    \item\label{it:rossi1} the limit $\overline{\vartheta}_k(\iii)=\lim_{n\to\infty}\vartheta_k(\iii|_n)$ exists for all $\iii\in\Sigma$ and the convergence is uniform,
    \item\label{it:rossi2} the map $\overline{\vartheta}_k\colon \Sigma\to \RP$ is Hölder continuous,
    \item\label{it:rossi3} the set $\overline{\vartheta}_k(\Sigma)$ is compact and contains the accumulation points of $\{\vartheta_k(\iii)\colon \iii\in\Sigma_*\}$,
    \item\label{it:rossi4} $A_{\iii}\overline{\vartheta}_1(\jjj)=\overline{\vartheta}_1(\iii\jjj)$ and $A_{\overleftarrow{\iii}}^{-1}\overline{\vartheta}_2(\jjj)=\overline{\vartheta}_2(\iii\jjj)$ for all $\iii\in\Sigma_*$ and $\jjj\in\Sigma$,
    \item\label{it:rossi5} $\overline{\vartheta}_1(\Sigma)\subset \CC^{\circ}$ and $\overline{\vartheta}_2(\Sigma)\subset \RP\setminus \CC$.
  \end{enumerate}
\end{lemma}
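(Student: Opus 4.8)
The plan is to reduce every item to a single quantitative fact: domination, together with the strong invariance of $\CC$, forces the maps $A_\iii$ to contract $\CC$ uniformly in the projective metric. Concretely, I would first establish that there are constants $C\ge 1$ and $0<\kappa<1$, depending only on $\A$ and $\CC$, such that
\begin{equation*}
  \diam(A_\iii\CC)\le C\kappa^{|\iii|}
\end{equation*}
for every $\iii\in\Sigma_*$. This follows by combining the defining inequality \eqref{eq:domination-def} with the elementary estimate that a matrix $A$ with $\alpha_2(A)\ll\alpha_1(A)$ sends every line $V$ lying at angle at least $\theta_0$ from its contracting direction $\projspan{\eta_2(A)}$ to within $O(\alpha_2(A)/\alpha_1(A))$ of $\projspan{A\eta_1(A)}$; the uniform separation $\theta_0>0$ between $\CC$ and the contracting directions is guaranteed because the latter accumulate on $\overline{\RP\setminus\CC}$, the strongly invariant multicone of $\A^{-1}$. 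Strong invariance also gives $A_{\iii|_{n+1}}\CC\subset A_{\iii|_n}\CC$, so for each fixed $\iii\in\Sigma$ the sets $(A_{\iii|_n}\CC)_n$ form a nested family of compacta with vanishing diameter, whose intersection is a single line; I denote it $\overline{\vartheta}_1(\iii)$.

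It then remains to identify this limit with $\lim_n\vartheta_1(\iii|_n)$, which proves \eqref{it:rossi1}. Fixing a reference line $V_0\in\CC^\circ$, the point $A_{\iii|_n}V_0$ lies in $A_{\iii|_n}\CC$ and hence converges to $\overline{\vartheta}_1(\iii)$. Since $\vartheta_1(\iii|_n)=\projspan{A_{\iii|_n}\eta_1(A_{\iii|_n})}$ is exactly the major-axis direction of the ellipse $A_{\iii|_n}(B(0,1))$, the eccentricity estimate above applied to $V_0$—which is separated from the contracting directions uniformly in $n$—gives $\sphericalangle(\vartheta_1(\iii|_n),A_{\iii|_n}V_0)\le C\kappa^n$. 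Combining the two bounds yields $\sphericalangle(\vartheta_1(\iii|_n),\overline{\vartheta}_1(\iii))\le C\kappa^n$ with $C,\kappa$ independent of $\iii$, which is \eqref{it:rossi1}. For \eqref{it:rossi2}, if $\iii\wedge\jjj=\iii|_m$ then both $\overline{\vartheta}_1(\iii)$ and $\overline{\vartheta}_1(\jjj)$ lie in $A_{\iii|_m}\CC$, so $\sphericalangle(\overline{\vartheta}_1(\iii),\overline{\vartheta}_1(\jjj))\le C\kappa^m=C\varrho(\iii,\jjj)^{\log_2(1/\kappa)}$, which is H\"older continuity.

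The remaining items follow formally. For \eqref{it:rossi3}, compactness of $\overline{\vartheta}_1(\Sigma)$ is immediate from continuity and compactness of $\Sigma$; any accumulation point of $\{\vartheta_1(\iii)\colon\iii\in\Sigma_*\}$ arises from finite words of growing length, and extending each to an infinite word and using the uniform estimate \eqref{it:rossi1} together with compactness of $\Sigma$ lets one pass to the limit inside $\overline{\vartheta}_1$. For \eqref{it:rossi4}, applying the fixed invertible map $A_\iii$ to the nested cones gives $A_\iii(A_{\jjj|_n}\CC)=A_{\iii\jjj|_n}\CC$; the left side shrinks to $A_\iii\overline{\vartheta}_1(\jjj)$ by continuity of $A_\iii$ on $\RP$ and the right side shrinks to $\overline{\vartheta}_1(\iii\jjj)$, forcing the two to coincide. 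Finally, \eqref{it:rossi5} is read off the nesting, since $\overline{\vartheta}_1(\iii)\in A_{i_1}\CC\subset\CC^\circ$. The statements for $k=2$ are proved by the identical argument applied to $\A^{-1}$, whose strongly invariant multicone is $\overline{\RP\setminus\CC}$ and for which the products $A_{\overleftarrow{\iii|_n}}^{-1}$ grow on the right exactly as the $A_{\iii|_n}$ do, giving $\overline{\vartheta}_2(\iii)\in A_{i_1}^{-1}\overline{\RP\setminus\CC}\subset\RP\setminus\CC$. I expect the main obstacle to be the first paragraph: proving the uniform contraction estimate and, in particular, checking that the major-axis directions $\vartheta_1(\iii|_n)$—which need not lie in $A_{\iii|_n}\CC$ a priori—stay within $O(\kappa^n)$ of that cone, which is precisely where the separation between $\CC$ and the contracting directions, and hence the domination hypothesis, is genuinely used.
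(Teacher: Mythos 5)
Your overall architecture is sound and is essentially the standard nested-cone argument: it is, in substance, the proof of \cite[Lemma 2.1]{Rossi2021}, which the paper does not reproduce but simply cites for items \eqref{it:rossi1}, \eqref{it:rossi3}, and \eqref{it:rossi4}, deriving \eqref{it:rossi5} from strong invariance; the only item the paper proves in full is the H\"older continuity \eqref{it:rossi2}, which it obtains by telescoping the successive angles $\theta_m=\sphericalangle(\vartheta_1(\iii|_m),\vartheta_1(\iii|_{m+1}))$ via the bound $\sin\theta_m\le c\,\alpha_2(A_{\iii|_m})/\alpha_1(A_{\iii|_m})$ from \cite{KaenmakiKoivusaloRossi2017}. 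Your route to \eqref{it:rossi2} --- both limits lie in $A_{\iii\wedge\jjj}\CC$, whose projective diameter is at most $C\kappa^{|\iii\wedge\jjj|}$ --- is a clean equivalent, and your arguments for \eqref{it:rossi3}--\eqref{it:rossi5} are correct as written, granted the first paragraph.

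The genuine gap is exactly where you suspected it, but your proposed justification does not close it. You claim uniform separation $\theta_0>0$ between $\CC$ and the contracting directions $\projspan{\eta_2(A_\iii)}$ ``because the latter accumulate on $\overline{\RP\setminus\CC}$.'' This fails on two counts. First, $\overline{\RP\setminus\CC}$ contains $\partial\CC$, so accumulation there is perfectly compatible with contracting directions approaching $\CC$, and gives no uniform angle between the \emph{whole} cone $\CC$ and these directions --- which is precisely what your estimate $\diam(A_\iii\CC)\le C\kappa^{|\iii|}$ consumes, since you apply the eccentricity bound to every $V\in\CC$. Second, the accumulation claim itself is circular in this approach: $\projspan{\eta_2(A_\iii)}=\projspan{A_\iii^{-1}\eta_1(A_\iii^{-1})}=\vartheta_2(\overleftarrow{\iii})$, so locating these directions near the strongly invariant multicone of $\A^{-1}$ is exactly the finite-level version of ``major axes stay near the cone'' for the inverse tuple, i.e.\ the statement you are in the middle of proving (for $k=2$). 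The non-circular fix, already in the paper's toolkit (it is used in the proof of Lemma \ref{lemma:alternate-furstenberg-dir}), is \cite[Lemma 2.3]{BochiMorris2015}: there is $\kappa_0>0$ with $\|A_\iii v\|\ge\kappa_0\|A_\iii\|\|v\|$ whenever $\projspan{v}\in\CC$. Writing $\theta=\sphericalangle(\projspan{v},\projspan{\eta_2(A_\iii)})$, this reads $\alpha_1(A_\iii)^2\sin^2\theta+\alpha_2(A_\iii)^2\cos^2\theta\ge\kappa_0^2\alpha_1(A_\iii)^2$, whence $\sin\theta\ge\bigl(\kappa_0^2-(\alpha_2(A_\iii)/\alpha_1(A_\iii))^2\bigr)^{1/2}\ge\kappa_0/2$ once $|\iii|$ exceeds a threshold supplied by \eqref{eq:domination-def}; the finitely many shorter levels are absorbed into the constant $C$, since $\diam(A_\iii\CC)$ is trivially bounded. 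With this separation lemma in place, both your diameter estimate and your identification step $\sphericalangle(\vartheta_1(\iii|_n),A_{\iii|_n}V_0)\le C\kappa^n$ go through, and the rest of your proof is complete.
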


\begin{proof}
For $k=1$, the claims \eqref{it:rossi1}, \eqref{it:rossi3}, and \eqref{it:rossi4} are proved in \cite[Lemma 2.1]{Rossi2021} and \eqref{it:rossi5} follows from the definition of the strongly invariant multicone. One can repeat the proofs for the dominated tuple $\A^{-1}=(A_1^{-1},\ldots,A_2^{-1})$ to obtain the claims for $k=2$. Similarly, it suffices to prove \eqref{it:rossi2} for $k=1$.

To that end, let $\iii\in\Sigma$, $m\in\N$, and
\begin{equation*}
    \theta_m=\sphericalangle(\vartheta_1(\iii|_m),\vartheta_1(\iii|_{m+1})).
\end{equation*}
In the proof of \cite[Lemma 2.1]{KaenmakiKoivusaloRossi2017}, it was shown that there is $c>1$ not depending on $m$ such that
\begin{equation*}
    \sin(\theta_m)\leq c\frac{\alpha_2(A_{\iii|_m})}{\alpha_1(A_{\iii|_m})}.
\end{equation*}
Since $\vartheta_1(\iii|_m) \to \overline{\vartheta}_1(\iii)$ as $m \to \infty$ there exists $n_0 \in \N$ such that for every $m\ge n_0$ we have $\theta_m \le 2\sin(\theta_m)$ and, by recalling the definition of domination from \eqref{eq:domination-def},
\begin{align*}
    \sphericalangle(\vartheta_1(\iii|_n),\overline{\vartheta}_1(\iii))&\leq \sum_{m=n}^{\infty}\theta_m\leq 2c\sum_{m=n}^{\infty}\frac{\alpha_2(A_{\iii|_m})}{\alpha_1(A_{\iii|_m})}\leq 2cC\sum_{m=n}^{\infty}\tau^m=\frac{2cC}{1-\tau}\tau^n
\end{align*}
for all $n \ge n_0$. For every $\iii,\jjj\in\Sigma$ with $n = |\iii \wedge \jjj| \ge n_0$, we thus have
\begin{equation*}
    \sphericalangle(\overline{\vartheta}_1(\iii),\overline{\vartheta}_1(\jjj))\leq\sphericalangle(\overline{\vartheta}_1(\iii),\vartheta_1(\iii|_n))+ \sphericalangle(\vartheta_1(\iii|_n),\overline{\vartheta}_1(\jjj))\leq \frac{4cC}{1-\tau}\tau^{|\iii\wedge\jjj|}
\end{equation*}
and the map $\overline{\vartheta}_1 \colon \Sigma \to \RP$ is Hölder continuous.
\end{proof}

For a dominated matrix tuple $\A = (A_1,\ldots,A_N) \in GL_2(\R)^N$, the sets
\begin{align*}
  Y_F &= \{\im(A)\in\RP\colon A\in\overline{\{cA_{\iii}\colon c\in\R\text{ and }\iii\in\Sigma_*\}} \text{ has rank one}\}, \\
  X_F &= \{\im(A)\in\RP\colon A\in\overline{\{cA_{\overleftarrow{\iii}}^{-1}\colon c\in\R\text{ and }\iii\in\Sigma_*\}} \text{ has rank one}\},
\end{align*}
are the collections of \emph{forward and backward Furstenberg directions}, respectively. The following lemma gives useful characterizations for the sets $X_F$ and $Y_F$.

\begin{lemma}\label{lemma:alternate-furstenberg-dir}
  If $\A=(A_1,\ldots,A_N)\in GL_2(\R)^N$ is dominated and $\CC \subset \RP$ is a strongly invariant multicone for $\A$, then
  \begin{equation*}
      Y_F=\overline{\vartheta}_1(\Sigma)=\bigcap_{n=1}^{\infty}\bigcup_{\iii\in\Sigma_n}A_{\iii}\CC
      \qquad\text{and}\qquad
      X_F=\overline{\vartheta}_2(\Sigma)=\bigcap_{n=1}^{\infty}\bigcup_{\iii\in\Sigma_n}A^{-1}_{\overleftarrow{\iii}}\overline{\RP\setminus\CC}.
  \end{equation*}
\end{lemma}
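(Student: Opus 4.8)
The plan is to establish only the characterisation of $Y_F$; the one for $X_F$ then follows by applying the $Y_F$ case to the inverse tuple $\A^{-1}=(A_1^{-1},\ldots,A_N^{-1})$, which is dominated and has $\overline{\RP\setminus\CC}$ as a strongly invariant multicone. Indeed, the products of $\A^{-1}$ are the matrices $A_{\overleftarrow\iii}^{-1}$, so the forward Furstenberg directions of $\A^{-1}$ coincide with $X_F$, the map $\overline{\vartheta}_1$ built from $\A^{-1}$ is exactly $\overline{\vartheta}_2$, and $\bigcup_{\iii\in\Sigma_n}A_\iii\CC$ turns into $\bigcup_{\iii\in\Sigma_n}A_{\overleftarrow\iii}^{-1}\overline{\RP\setminus\CC}$. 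For the $Y_F$ case, write $\CC_n=\bigcup_{\iii\in\Sigma_n}A_\iii\CC$; since $A_i\CC\subset\CC$ for every $i$, these sets are compact and nested decreasingly, and I will prove the two equalities $\overline{\vartheta}_1(\Sigma)=\bigcap_n\CC_n$ and $\overline{\vartheta}_1(\Sigma)=Y_F$ separately.

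For $\overline{\vartheta}_1(\Sigma)=\bigcap_n\CC_n$, the inclusion $\subseteq$ is immediate: given $\iii\in\Sigma$ and $n\in\N$, Lemma \ref{lemma:rossi}\eqref{it:rossi4} yields $\overline{\vartheta}_1(\iii)=A_{\iii|_n}\overline{\vartheta}_1(\sigma^n\iii)$, while \eqref{it:rossi5} gives $\overline{\vartheta}_1(\sigma^n\iii)\in\CC^\circ\subset\CC$, so $\overline{\vartheta}_1(\iii)\in A_{\iii|_n}\CC\subset\CC_n$. For $\supseteq$, take $V\in\bigcap_n\CC_n$ and choose, for each $n$, a word $\iii^{(n)}\in\Sigma_n$ and $W_n=\projspan{w_n}\in\CC$ with $V=A_{\iii^{(n)}}W_n$. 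Decomposing $w_n$ in the singular basis of $A_{\iii^{(n)}}$ gives
\begin{equation*}
  \tan\sphericalangle(V,\vartheta_1(\iii^{(n)}))=\frac{\alpha_2(A_{\iii^{(n)}})}{\alpha_1(A_{\iii^{(n)}})}\,|\tan\phi_n|,
\end{equation*}
where $\phi_n$ is the angle between $w_n$ and $\eta_1(A_{\iii^{(n)}})$; domination bounds the first factor by $C\tau^n$, so everything hinges on bounding $|\tan\phi_n|$, that is, on separating $\projspan{\eta_2(A_{\iii^{(n)}})}$ from $\CC$.

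The separation rests on the identity $\projspan{\eta_2(A_\iii)}=\vartheta_2(\overleftarrow\iii)$, which is read off from the singular value decomposition of $A_\iii$ together with the definition $\vartheta_2(\kkk)=\projspan{A_{\overleftarrow\kkk}^{-1}\eta_1(A_{\overleftarrow\kkk}^{-1})}$. By Lemma \ref{lemma:rossi}\eqref{it:rossi1} and \eqref{it:rossi5} for $k=2$, the values $\vartheta_2(\kkk)$ with $|\kkk|$ large lie arbitrarily close to the compact set $\overline{\vartheta}_2(\Sigma)\subset\RP\setminus\CC$, and since $\RP\setminus\CC$ is open we have $\dist(\overline{\vartheta}_2(\Sigma),\CC)>0$. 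Hence there are $M\in\N$ and $\delta>0$ with $\sphericalangle(\projspan{\eta_2(A_\iii)},\CC)\ge\delta$ whenever $|\iii|\ge M$, so $|\tan\phi_n|\le\cot\delta$ for all large $n$ and the displayed estimate gives $\vartheta_1(\iii^{(n)})\to V$. As $|\iii^{(n)}|=n\to\infty$, the uniform convergence and Hölder continuity from Lemma \ref{lemma:rossi}\eqref{it:rossi1},\eqref{it:rossi2} identify $V$ with $\overline{\vartheta}_1(\hat\iii)$ for a subsequential limit $\hat\iii\in\Sigma$ of suitable extensions of the $\iii^{(n)}$, whence $V\in\overline{\vartheta}_1(\Sigma)$.

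It remains to show $Y_F=\overline{\vartheta}_1(\Sigma)$, for which I normalise by $c_\iii=\|A_\iii\|^{-1}$, so that $c_\iii A_\iii$ has singular values $1$ and $\alpha_2(A_\iii)/\alpha_1(A_\iii)$. By domination, any subsequential limit of $c_\iii A_\iii$ taken along words with $|\iii|\to\infty$ is a rank-one matrix whose image is $\lim\vartheta_1(\iii)$, computed from the left singular factors. Applying this to the prefixes $\hat\iii|_n$ of a fixed $\hat\iii\in\Sigma$ realises $\overline{\vartheta}_1(\hat\iii)$ as the image of a rank-one element of $\overline{\{cA_\jjj\}}$, giving $\overline{\vartheta}_1(\Sigma)\subseteq Y_F$; conversely, a scalar multiple $cA_\iii$ can degenerate to a rank-one matrix in the closure only when $\alpha_2(A_\iii)/\alpha_1(A_\iii)\to0$, which by domination forces the word lengths to diverge, and then the same computation places its image among the accumulation values of $\vartheta_1$ along words of diverging length, i.e.\ in $\overline{\vartheta}_1(\Sigma)$ by the argument of the previous paragraph. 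The main obstacle is precisely the separation in the third paragraph: the projective action of $A_\iii$ fails to contract directions near its most contracted axis $\projspan{\eta_2(A_\iii)}$, and it is the identity $\projspan{\eta_2(A_\iii)}=\vartheta_2(\overleftarrow\iii)$ that converts this difficulty into the backward information already supplied by Lemma \ref{lemma:rossi} for $k=2$; verifying this identity and the resulting uniform separation for all long words is the delicate point.
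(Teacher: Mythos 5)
Your proposal is correct, and on the crux it takes a genuinely different route from the paper. The paper closes a cycle of three inclusions, $Y_F\subset\overline{\vartheta}_1(\Sigma)\subset\bigcap_{n}\bigcup_{\iii\in\Sigma_n}A_{\iii}\CC\subset Y_F$: the first via the Banach--Steinhaus theorem and the convergence $\eta_1(A_{\iii_n})\to\eta$ (essentially your SVD normalisation in different clothing), the second exactly as you argue via Lemma \ref{lemma:rossi}\eqref{it:rossi4}--\eqref{it:rossi5}, and the third by invoking \cite[Lemma 2.3]{BochiMorris2015}, which supplies the uniform lower bound $\|A_{\iii_n}v_n\|\ge\kappa\|A_{\iii_n}\|$ for unit vectors $v_n$ with $\projspan{v_n}\in\CC$. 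You replace this citation by a self-contained separation argument: the identity $\projspan{\eta_2(A_{\iii})}=\vartheta_2(\overleftarrow{\iii})$ (which is correct, by the singular value decomposition and the definition of $\vartheta_2$), combined with compactness of $\overline{\vartheta}_2(\Sigma)\subset\RP\setminus\CC$ and the uniform convergence in Lemma \ref{lemma:rossi}\eqref{it:rossi1}, yields a uniform angle $\delta>0$ between $\CC$ and the most contracted axis of every sufficiently long product --- and this uniform transversality is precisely the content of the Bochi--Morris bound in the two-dimensional dominated setting. Your explicit estimate $\tan\sphericalangle(V,\vartheta_1(\iii^{(n)}))\le C\tau^n\cot\delta$ then proves $\bigcap_n\CC_n\subset\overline{\vartheta}_1(\Sigma)$ directly and with a rate, whereas the paper obtains that inclusion only indirectly by routing through $Y_F$; correspondingly you prove four inclusions where the paper proves three. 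What the paper's route buys is brevity via an external lemma; what yours buys is self-containedness within Lemma \ref{lemma:rossi} and quantitative convergence of $\vartheta_1$ along the defining words. Two cosmetic slips, neither a gap: the fact that a rank-one limit of $c_nA_{\iii_n}$ forces $|\iii_n|\to\infty$ follows not from domination but simply from there being finitely many invertible products of each bounded length, whose ratios $\alpha_2/\alpha_1$ are therefore bounded away from zero; and the decomposition of $w_n$ in the singular basis requires $\alpha_1(A_{\iii^{(n)}})>\alpha_2(A_{\iii^{(n)}})$, which domination guarantees only for large $n$ --- harmless, since only large $n$ enter your limit.
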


\begin{proof}
We prove the claims for $Y_F$ and note that the claims for $X_F$ follow similarly by considering the dominated tuple $\A^{-1}=(A_1^{-1},\ldots,A_N^{-1})$. Let us first show that $Y_F \subset \overline{\vartheta}_1(\Sigma)$. To that end, let $V\in Y_F$ and choose a sequence $(\iii_n)_{n\in\N}$ of finite words and a sequence $(c_n)_{n\in\N}$ of real numbers such that $c_nA_{\iii_n}\to A$ and $\im(A)=V$. By passing to a sub-sequence if necessary, we may assume that
\begin{equation*}
    \eta_1(A_{\iii_n})\to \eta
\end{equation*}
for some $\eta\in S^1$. Since the maps $A_{\iii_n}$ are linear and $\sup_{n\in\N}\|c_nA_{\iii_n}\|<\infty$, it follows from the Banach-Steinhaus theorem that $c_nA_{\iii_n}\eta_1(A_{\iii_n})\to A\eta$ and therefore,
\begin{equation*}
    \|A\eta\|=\lim_{n\to\infty}c_n\|A_{\iii_n}\eta_1(A_{\iii_n})\|=\lim_{n\to\infty}c_n\|A_{\iii_n}\|=\|A\|.
\end{equation*}
In particular $\|A\eta\|>0$, so $A\eta$ is a non-zero vector in $\im(A)$. Thus, by the continuity of the map $v\mapsto \projspan{ v}$,
\begin{equation*}
    V=\im(A)=\projspan{ A\eta}=\lim_{n\to\infty}\projspan{ c_nA_{\iii_n}\eta(A_{\iii_n})}=\lim_{n\to\infty}\vartheta_1(\iii_n),
\end{equation*}
and $V\in \overline{\vartheta}_1(\Sigma)$ by Lemma \ref{lemma:rossi}\eqref{it:rossi3}.

Let us then show that $\overline{\vartheta}_1(\Sigma) \subset \bigcap_{n=1}^{\infty}\bigcup_{\iii\in\Sigma_n}A_{\iii}\CC$. Fix $V \in \overline{\vartheta}_1(\Sigma)$ and let $\iii \in \Sigma$ be such that $\overline{\vartheta}_1(\iii)=V$. Observe that, by Lemma \ref{lemma:rossi}\eqref{it:rossi4},
\begin{equation*}
  \vartheta_1(\iii)=A_{\iii|_n}\vartheta_1(\sigma^n\iii)
\end{equation*}
for all $n \in \N$. Since, by Lemma \ref{lemma:rossi}\eqref{it:rossi5}, $\vartheta_1(\sigma^n\iii)\in \CC$ for all $n\in\N$, we have
\begin{equation*}
    V\in\bigcap_{n=1}^{\infty}\bigcup_{\iii\in\Sigma_n}A_{\iii}\CC
\end{equation*}
as required.

Finally, let us show that $\bigcap_{n=1}^{\infty}\bigcup_{\iii\in\Sigma_n}A_{\iii}\CC \subset Y_F$. To that end, suppose that $V \in \bigcap_{n=1}^{\infty}\bigcup_{\iii\in\Sigma_n}A_{\iii}\CC$. Then for any $n\in\N$, we may choose $\iii_n\in\Sigma_n$ and $V_n\in\CC$, such that $V= A_{\iii_n}V_n$. Let $v_n$ be a unit vector such that $V_n=\projspan{ v_n}$. Note that the set
\begin{equation*}
    S=\{A\in GL_2(\R)\colon \|A\|=1\}
\end{equation*}
is a compact subset of $GL_2(\R)$. By passing to a sub-sequence if necessary, we may assume that $v_n\to v$ for some $v \in S^1$ and
\begin{equation*}
    \frac{A_{\iii_n}}{\|A_{\iii_n}\|}\to A
\end{equation*}
for some $A \in S$. Now, by recalling the definition of domination from \eqref{eq:domination-def}, there exist $C>0$ and $0<\tau<1$ such that
\begin{equation*}
    |\det(\|A_{\iii_n}\|^{-1}A_{\iii_n})|=\frac{\alpha_1(A_{\iii_n})\alpha_2(A_{\iii_n})}{\|A_{\iii_n}\|}=\frac{\alpha_2(A_{\iii_n})}{\alpha_1(A_{\iii_n})}\leq C\tau^{n}.
\end{equation*}
Consequently, $\det(A)=0$, which together with $\|A\|=1$, implies that $\rank(A)=1$. Recall that, by \cite[Lemma 2.3]{BochiMorris2015}, there is a positive constant $\kappa$ such that
\begin{equation*}
    \|A_{\iii_n}v_n\|\geq\kappa\|A_{\iii_n}\|\|v_n\|=\kappa\|A_{\iii_n}\|
\end{equation*}
for all $n \in \N$. Since the maps $A_{\iii_n}$ are linear, it follows from the Banach-Steinhaus theorem that $\|A_{\iii_n}\|^{-1}A_{\iii_n}v_n\to Av$ and therefore,
\begin{equation*}
    \|Av\| = \lim_{n\to\infty}\frac{\|A_{\iii_n}v_n\|}{\|A_{\iii_n}\|} \ge \kappa
\end{equation*}
and $Av$ is a non-zero vector in $\im(A)$. Thus, by the continuity of the map $v\mapsto \projspan{ v}$,
\begin{equation*}
    V=\lim_{n \to \infty}A_{\iii_n}V_n=\lim_{n\to\infty}\projspan{ A_{\iii_n}v_n}=\projspan{ Av}=\im(A)\in Y_F.
\end{equation*}
Therefore, $V \in Y_F$ and the proof is finished.
\end{proof}

\subsection{Bounded neighborhood condition}
To finish this section, we introduce a geometric separation condition for self-affine sets, which we call the bounded neighborhood condition. We remark that a similar condition has already been introduced in \cite{Kaenmaki2004}. We also define a weaker variant which allows exact overlaps in the construction. Let $X$ be a self-affine set and
\begin{equation*}
    \Phi(x,r)= \{\varphi_{\iii}\colon \alpha_2(A_{\iii})\leq r < \alpha_2(A_{\iii^-})\text{ and }\varphi_{\iii}(X)\cap B(x,r)\ne\emptyset\}
\end{equation*}
for all $x \in X$ and $r>0$. We say that $X$ satisfies the \emph{weak bounded neighborhood condition (WBNC)}, if
\begin{equation*}
    \sup_{\atop{x \in X}{r>0}}\#\Phi(x,r) < \infty.
\end{equation*}
Furthermore, $X$ satisfies the \emph{bounded neighborhood condition (BNC)} if it satisfies the WBNC and $\varphi_{\iii} \ne \varphi_{\jjj}$ whenever $\iii,\jjj \in\Sigma_*$ such that $\iii \ne \jjj$. It turns out that if the SSC is not satisfied, then the WBNC is the right separation condition for studying the tangent structure of $X$. Let us comment on how the BNC and the WBNC are related to other separation conditions. It is not difficult to see that the SSC implies the BNC, but we will give an example of a self-affine set satisfying the SOSC but not the BNC later in Example \ref{ex:counterexample}. This also shows that it is not possible to replace the WBNC with the SOSC in the assumptions of the main result of Section \ref{sec:tangent-decompo}.

\section{Tangent decompositions and slices} \label{sec:tangent-decompo}
We begin to study the structure of weak tangent sets of dominated self-affine sets satisfying the bounded neighborhood condition. In the presence of the WBNC, we show the existence of tangent decompositions and demonstrate how they can be used to study slices of the set. Our main observation in this section is the following proposition which generalizes B\'ar\'any, K\"aenm\"aki, and Rossi \cite[Theorem 5.2]{BaranyKaenmakiRossi2021}. By $A+x$ we mean the set $\{a+x\colon a\in A\}$ for all $A\subset \R^2$ and $x\in\R^2$.

\begin{proposition}\label{prop:assouad-upper-bound}
  If $X$ is a self-affine set satisfying the WBNC, then for every $T\in\Tan(X)$ there exist $x\in X$ and $V\in X_F$ such that
  \begin{equation*} 
    \dimh(T)\leq\max\{\dimh(X), 1+\dimh(X\cap (V+x))\}.
  \end{equation*}
  In particular,
  \begin{equation*}
    \dima(X)\leq\max\{\dimh(X), 1+\sup_{\atop{x\in X}{V\in X_F}}\dimh(X\cap (V+x))\}.
  \end{equation*}
  If $X$ is dominated, then $\dimh(X)$ can be removed from both maxima above.
\end{proposition}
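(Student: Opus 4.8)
The plan is to use the weak bounded neighborhood condition to reduce an arbitrary weak tangent to a single, extremely elongated rescaled cylinder, and then to read off from that cylinder both a line in a direction $V\in X_F$ and a bound on the transversal dimension by a genuine slice. Fix $T\in\Tan(X)$, realized by sequences $(x_n)\subset X$ and $r_n\downarrow0$ with $M_{x_n,r_n}(X)\cap B(0,1)\to T$. For each $n$ I would stop the construction at scale $r_n$, that is, consider the cylinders in $\Phi(x_n,r_n)$; these cover $X\cap B(x_n,r_n)$, and by the WBNC there are at most $K=\sup_{x,r}\#\Phi(x,r)<\infty$ of them. Hence $M_{x_n,r_n}(X)\cap B(0,1)$ is contained in a union of at most $K$ rescaled cylinders $\psi_\iii^{(n)}(X)\cap B(0,1)$, where $\psi_\iii^{(n)}=M_{x_n,r_n}\circ\fii_\iii$ has linear part $r_n^{-1}A_\iii$. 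Passing to a subsequence along which the number of cylinders is a constant $k$ and each rescaled cylinder converges in Hausdorff distance to a compact set $T_j$, one gets $T\subset\bigcup_{j=1}^kT_j$, so by monotonicity and finite stability of $\dimh$ it suffices to bound $\dimh(T_j)$ for a single $j$.

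So I would fix one sequence of words $\iii^{(n)}$ with $A_n:=A_{\iii^{(n)}}$, $\alpha_2(A_n)\le r_n<\alpha_2(A_{(\iii^{(n)})^-})$ and $\psi_n(X)\cap B(0,1)\to T_j$, and write $\psi_n(y)=r_n^{-1}A_ny+c_n$. Using the singular value decomposition of $A_n$, with domain vectors $\eta_1(A_n),\eta_2(A_n)$ and range vectors $u_1^{(n)},u_2^{(n)}$, the singular values of $r_n^{-1}A_n$ are $a_n=\alpha_1(A_n)/r_n$ and $b_n=\alpha_2(A_n)/r_n\le1$, and domination together with $|\iii^{(n)}|\to\infty$ (forced by $\alpha_2(A_n)\le r_n\to0$) gives $a_n\to\infty$. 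After a further subsequence I may assume $u_k^{(n)}\to u_k$, $\eta_k(A_n)\to v_k$ (orthonormal limits), $b_n\to b\in[0,1]$, and, using that $\psi_n(X)$ meets $B(0,1)$ while $X$ is bounded, that $p_n=-a_n^{-1}\la c_n,u_1^{(n)}\ra\to p$ and $\la c_n,u_2^{(n)}\ra\to d$. The identity $\la\psi_n(y),u_1^{(n)}\ra=a_n\la y,\eta_1(A_n)\ra+\la c_n,u_1^{(n)}\ra$ then shows that every $y\in X$ with $\psi_n(y)\in B(0,1)$ satisfies $|\la y,\eta_1(A_n)\ra-p_n|\le a_n^{-1}$, i.e. it lies in a slab perpendicular to $\eta_1(A_n)$ whose width tends to $0$.

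The heart of the argument is the limit. Every $t\in T_j$ is a limit of points $t_n=\psi_n(y_n)$ with $y_n\in X$ in this slab; by compactness I may assume $y_n\to y\in X$, whence $\la y,v_1\ra=p$, so $y$ lies on the line $\ell=\{z:\la z,v_1\ra=p\}$, which has direction $V=\la v_2\ra$. Thus $X\cap\ell\ne\emptyset$ whenever $T_j\ne\emptyset$, and writing $\ell=V+x$ for some $x\in X\cap\ell$ produces a slice. Moreover $\la\psi_n(y),u_2^{(n)}\ra=b_n\la y,v_2^{(n)}\ra+\la c_n,u_2^{(n)}\ra$ passes to the limit as $\la t,u_2\ra=b\la y,v_2\ra+d$, so the orthogonal projection of $T_j$ onto $\la u_2\ra$ is contained in the image of $X\cap\ell$ under the Lipschitz map $y\mapsto b\la y,v_2\ra+d$, and therefore has Hausdorff dimension at most $\dimh(X\cap\ell)$. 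Since $T_j\subset[-1,1]$ in the $u_1$-coordinate, its projection onto $\la u_1\ra$ has upper Minkowski dimension at most $1$, and the inclusion $T_j\subset\proj_{\la u_1\ra}(T_j)\times\proj_{\la u_2\ra}(T_j)$ in the orthonormal frame $(u_1,u_2)$, combined with the product inequality $\dimh(A\times B)\le\udimm(A)+\dimh(B)$, yields $\dimh(T_j)\le1+\dimh(X\cap\ell)$.

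It remains to verify $V\in X_F$, which is bookkeeping with reversals. The slice direction is $\la v_2\ra=\lim_n\la\eta_2(A_n)\ra$, and a short singular-value computation gives $\la\eta_2(A_n)\ra=\la A_n^{-1}\eta_1(A_n^{-1})\ra$; writing $\B=\A^{-1}=(A_1^{-1},\dots,A_N^{-1})$ and using $A_\iii^{-1}=B_{\overleftarrow{\iii}}$, this equals $\vartheta_1^{\B}(\overleftarrow{\iii^{(n)}})$. By Lemma \ref{lemma:rossi}\eqref{it:rossi3} and Lemma \ref{lemma:alternate-furstenberg-dir} applied to the dominated tuple $\B$, these accumulate in $\overline{\vartheta}{}_1^{\B}(\Sigma)=Y_F^{\B}$, and the identity $A_{\overleftarrow{\iii}}^{-1}=B_\iii$ shows $Y_F^{\B}=X_F$. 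Hence $V\in X_F$, and taking a maximizing $j$ gives $\dimh(T)\le\max_j\dimh(T_j)\le1+\dimh(X\cap(V+x))$; the ``in particular'' statement then follows from Lemma \ref{lemma:kaenmakiojalarossi} upon taking the supremum over $x\in X$ and $V\in X_F$. I expect the main obstacle to be the limiting step of the third paragraph: although the slabs collapse to $\ell$, the sets $X\cap(\text{slab})$ need not converge to $X\cap\ell$ in a dimension-preserving way, so one cannot argue slice by slice; the resolution is to pass to the limit only in the single transversal coordinate $\la\,\cdot\,,u_2\ra$ and to pay for the elongated direction with the cheap factor $\udimm\le1$ through the asymmetric product inequality.
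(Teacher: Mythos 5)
Your proposal is correct and follows essentially the same route as the paper: your first paragraph reconstructs the tangent decomposition of Lemma~\ref{lemma:tangent-decomp} from the WBNC, and your SVD/slab analysis of the forward maps $\psi_n=M_{x_n,r_n}\circ\fii_{\iii^{(n)}}$ is the coordinate form of the paper's limit of the inverse maps $\fii_{\jjj_{n_k}^i}^{-1}\circ M_{\pi\iii_{n_k},r_{n_k}}^{-1}\to G_i+y_i$, with your containment $\proj_{\la u_2\ra}(T_j)\subset f(X\cap\ell)$ for the Lipschitz map $f(y)=b\la y,v_2\ra+d$ being the mirror image of the paper's $G_i(T_i)+y_i\subset X\cap(\im(G_i)+y_i)$. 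The remaining ingredients coincide: domination forces the rank-one degeneration exactly as in Lemma~\ref{lemma:tangent-decomp}, your asymmetric product bound $\dimh(A\times B)\le\udimm(A)+\dimh(B)$ plays the role of the paper's identity $\dimh(\R\times\proj_{\ker(G_i)^{\perp}}(T_i))=1+\dimh(\proj_{\ker(G_i)^{\perp}}(T_i))$, your $X_F$-bookkeeping via $\la\eta_2(A_{\iii})\ra=\la A_{\iii}^{-1}\eta_1(A_{\iii}^{-1})\ra=\vartheta_2(\overleftarrow{\iii})$ is equivalent to the paper's one-line observation that the matrices $r_{n_k}A_{\jjj_{n_k}^i}^{-1}$ are of the form $cA_{\overleftarrow{\kkk}}^{-1}$ so that $\im(G_i)\in X_F$ by definition, and the final appeal to Lemma~\ref{lemma:kaenmakiojalarossi} is identical.
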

In Example \ref{ex:counterexample}, we show that the proposition can fail if the WBNC is not satisfied and in fact, this is even possible under the SOSC. In particular, the previous proposition is not true if one replaces the WBNC by the SOSC.

The proof of Proposition \ref{prop:assouad-upper-bound} relies on finding suitable decompositions of the tangents of self-affine sets into finitely many components, where each component can be affinely mapped to a slice of the original set. This is made formal by the following lemma.
\begin{lemma}\label{lemma:tangent-decomp}
  If $X$ is a self-affine set satisfying the WBNC and $T\in \Tan(X)$, then there exists a finite index set $I$ such that for every $i\in I$ there is a set $T_i\subset T$, a point $y_i\in X$, and a linear map $G_i$ for which
  \begin{enumerate}
      \item\label{it:decomp1} $T=\bigcup_{i\in I}T_i$,
      \item\label{it:decomp2} $\mathrm{rank}(G_i)\geq 1$,
      \item\label{it:decomp3} $G_i(T_i)+y_i\subset X$.
  \end{enumerate}
  Furthermore, if $X$ is dominated, then $\mathrm{rank}(G_i)=1$ and $\im(G_i)\in X_F$ for all $i\in I$.
\end{lemma}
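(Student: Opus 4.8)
The plan is to realize the weak tangent $T$ as a finite union of rescaled cylinder pieces and then invert the rescaling on each piece. Fix sequences $(x_n) \subset X$ and $r_n \downarrow 0$ with $M_{x_n,r_n}(X) \cap B(0,1) \to T$ in Hausdorff distance. For each $n$ the stopping set $\{\iii : \alpha_2(A_\iii) \le r_n < \alpha_2(A_{\iii^-})\}$ partitions $\Sigma$, so $X$ is the union of the corresponding $\fii_\iii(X)$, and consequently $X \cap B(x_n, r_n) \subset \bigcup_{\fii_\iii \in \Phi(x_n,r_n)} \fii_\iii(X)$. The WBNC furnishes $\kappa := \sup_{x,r}\#\Phi(x,r) < \infty$; passing to a subsequence I may assume $\#\Phi(x_n,r_n) \equiv p \le \kappa$ and enumerate the words as $\iii_n^1,\dots,\iii_n^p$. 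Applying $M_{x_n,r_n}$ and intersecting with $\overline{B(0,1)}$, the sets $E_n^i := M_{x_n,r_n}(\fii_{\iii_n^i}(X)) \cap \overline{B(0,1)}$ satisfy $M_{x_n,r_n}(X)\cap\overline{B(0,1)} = \bigcup_{i=1}^p E_n^i$.

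Since the hyperspace of closed subsets of $\overline{B(0,1)}$ is compact under the Hausdorff metric, I pass to a further subsequence so that $E_n^i \to T_i$ for each $i$. As a Hausdorff limit of a fixed finite union is the union of the limits, this yields \eqref{it:decomp1}, namely $T = \bigcup_{i=1}^p T_i$ with each $T_i \subset T$. The crux is to control the inverse maps $\psi_n^i := \fii_{\iii_n^i}^{-1} \circ M_{x_n,r_n}^{-1}$, which are affine with linear part $r_n A_{\iii_n^i}^{-1}$. Here the stopping rule is essential: from $\alpha_2(A_{\iii_n^i}) \le r_n$ I get $\|r_n A_{\iii_n^i}^{-1}\| = r_n/\alpha_2(A_{\iii_n^i}) \ge 1$, while the supermultiplicativity $\alpha_2(AB) \ge \alpha_2(A)\alpha_2(B)$ together with $c_0 := \min_j \alpha_2(A_j) > 0$ and $r_n < \alpha_2(A_{(\iii_n^i)^-})$ gives $\alpha_2(A_{\iii_n^i}) \ge c_0\,\alpha_2(A_{(\iii_n^i)^-}) > c_0 r_n$, hence $\|r_n A_{\iii_n^i}^{-1}\| < 1/c_0$. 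Thus the linear parts are bounded away from both $0$ and $\infty$; anchoring each map at a point of $\fii_{\iii_n^i}(X) \cap B(x_n,r_n)$ keeps the translation parts $\psi_n^i(0)$ bounded as well. Passing to a subsequence, $\psi_n^i \to \psi^i$ uniformly on compacta, an affine map with linear part $G_i := \lim r_n A_{\iii_n^i}^{-1}$ satisfying $\|G_i\| \ge 1$, so $\rank(G_i) \ge 1$, which is \eqref{it:decomp2}.

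For \eqref{it:decomp3} I push each tangent piece into $X$. Given $y \in T_i$, Hausdorff convergence supplies $y_n \in E_n^i$ with $y_n \to y$; then $\psi_n^i(y_n) = \fii_{\iii_n^i}^{-1}(r_n y_n + x_n) \in X$, and uniform convergence together with compactness of $X$ forces $\psi^i(y) \in X$. Hence $\psi^i(T_i) \subset X$, i.e.\ $G_i(T_i) + y_i \subset X$ with $y_i := \psi^i(0)$. To anchor the base point in $X$, choose $p_n^i \in \fii_{\iii_n^i}(X) \cap B(x_n,r_n)$: then $\fii_{\iii_n^i}^{-1}(p_n^i) \in X$, the points $u^i := \lim M_{x_n,r_n}(p_n^i) \in T_i$ and $w^i := \psi^i(u^i) = \lim \fii_{\iii_n^i}^{-1}(p_n^i) \in X$ satisfy $w^i = G_i(u^i)+y_i$, and since $G_i(T_i) \subset \im(G_i)$ the entire image lies on the line $\im(G_i) + w^i$ through $w^i \in X$; this is exactly the data fed into Proposition \ref{prop:assouad-upper-bound}, and verifying the translation literally lands in $X$ is the one delicate bookkeeping point.

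Finally, under domination I upgrade \eqref{it:decomp2}. Since $\alpha_2(A_{\iii_n^i}) \le r_n \to 0$ while $\alpha_2(A_{\iii_n^i}) \ge c_0^{|\iii_n^i|}$, the lengths satisfy $|\iii_n^i| \to \infty$; then \eqref{eq:domination-def} gives $\alpha_2(A_{\iii_n^i})/\alpha_1(A_{\iii_n^i}) \le C\tau^{|\iii_n^i|} \to 0$, so the smaller singular value $r_n/\alpha_1(A_{\iii_n^i}) \le (1/c_0)\,\alpha_2(A_{\iii_n^i})/\alpha_1(A_{\iii_n^i})$ of $r_nA_{\iii_n^i}^{-1}$ tends to $0$. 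Hence $\alpha_2(G_i) = 0$ and, with $\|G_i\| \ge 1$, we conclude $\rank(G_i) = 1$. Writing $r_n A_{\iii_n^i}^{-1} = r_n A_{\overleftarrow{\jjj_n}}^{-1}$ with $\jjj_n := \overleftarrow{\iii_n^i}$ exhibits $G_i$ as a rank-one element of $\overline{\{c A_{\overleftarrow{\iii}}^{-1} : c \in \R,\ \iii \in \Sigma_*\}}$, so $\im(G_i) \in X_F$ by definition. The main obstacle is the simultaneous limiting passage: extracting a single subsequence along which all $p$ cylinder pieces converge in the hyperspace \emph{and} all $p$ inverse maps converge, while keeping the norms of $r_n A_{\iii_n^i}^{-1}$ pinned in $[1,1/c_0]$ by the stopping rule so that no $G_i$ degenerates to $0$ or blows up — it is this pinning, rather than any isolated estimate, that makes the decomposition work and is precisely where the WBNC is used.
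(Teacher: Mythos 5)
Your proposal is correct and follows essentially the same route as the paper's proof: the stopping family $\Phi(x_n,r_n)$ made constant in cardinality along a subsequence via the WBNC, hyperspace compactness to extract the pieces $T_i$, convergence of the inverse affine maps whose linear parts $r_nA_{\iii_n^i}^{-1}$ are pinned in norm by the stopping rule, and domination to force rank one (the paper sends $\det(r_{n_k}A_{\jjj_k}^{-1})\to 0$ where you send $\alpha_2(r_nA_{\iii_n^i}^{-1})\to 0$, which is equivalent), together with the reversed-word identity placing $\im(G_i)$ in $X_F$. The bookkeeping point you flag --- that the translation $y_i=\psi^i(0)$ is not literally shown to lie in $X$ --- is present at the same level in the paper's own argument (which anchors at $\varphi_{\jjj_{n_k}^i}^{-1}(\pi\iii_{n_k})$ and asserts the limit lies in $X^K$), and your re-anchoring at a point $w^i\in G_i(T_i)+y_i\subset X$ is exactly what the downstream application in Proposition \ref{prop:assouad-upper-bound} needs, so this is not a gap.
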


\begin{proof}
Let $T\in\Tan(X)$. By definition, we may choose a sequence $(\iii_n)_{n\in\N}$ of infinite words and a sequence $(r_n)_{n\in\N}$ of positive real numbers converging to $0$ such that
\begin{equation*}
    M_{\pi\iii_n,r_n}(X)\cap B(0,1)\to T
\end{equation*}
in Hausdorff distance. Since $X$ satisfies the WBNC, there exists $M>0$, such that
\begin{equation*}
    \#\Phi(\pi\iii_n,r_n)\leq M,
\end{equation*}
for all $n \in \N$. Hence, there is $K \in \{1,\ldots,M\}$ such that $\#\Phi(\pi\iii_n,r_n)=K$ for infinitely many $n$. In other words, there exists a sequence $(n_k)_{k \in \N}$ of natural numbers such that $\#\Phi(\pi\iii_{n_k},r_{n_k})=K$ for all $k\in\N$. Write
\begin{equation*}
  \Phi(\pi\iii_{n_k},r_{n_k}) = \{\varphi_{\jjj_{n_k}^i}\}_{i=1}^K
\end{equation*}
for all $k \in \N$. By passing to a sub-sequence, if necessary, we see that for every $i\in\{1,\ldots,K\}$, there exists a set $T_i$ such that
\begin{equation*}
  (M_{\pi\iii_{n_k},r_{n_k}} \circ \varphi_{\jjj_{n_k}^1},\ldots,M_{\pi\iii_{n_k},r_{n_k}} \circ \varphi_{\jjj_{n_k}^K})(X^K) \cap B(0,1)^K \to T_1 \times \cdots \times T_K
\end{equation*}
in Hausdorff distance. Noting that
\begin{equation*}
    M_{\pi\iii_{n_k},r_{n_k}}(X)\cap B(0,1)= \bigcup_{i=1}^KM_{\pi\iii_{n_k},r_{n_k}}\circ \varphi_{\jjj_{n_k}^{i}}(X)\cap B(0,1)
\end{equation*}
for all $k \in \N$, we see that \eqref{it:decomp1} holds.

Since $X^K$ is compact, we may assume that
\begin{equation*}
  (\varphi_{\jjj_{n_k}^{1}}^{-1}(\pi\iii_{n_k}),\ldots,\varphi_{\jjj_{n_k}^{K}}^{-1}(\pi\iii_{n_k})) \to (y_1,\ldots,y_K)\in X^K,
\end{equation*}
and therefore for each $i \in \{1,\ldots,K\}$ there exists a linear map $G_i$ such that
\begin{equation*}
    (\varphi_{\jjj_{n_k}^{1}}^{-1}\circ M_{\pi\iii_{n_k},r_{n_k}}^{-1},\ldots,\varphi_{\jjj_{n_k}^{K}}^{-1}\circ M_{\pi\iii_{n_k},r_{n_k}}^{-1}) \to (G_1+y_1,\ldots,G_K+y_K)
\end{equation*}
in the uniform convergence in $X^K$. Clearly, 
\begin{equation*}
   \varphi_{\jjj_{n_k}^{i}}^{-1}\circ M_{\pi\iii_{n_k},r_{n_k}}^{-1}(M_{\pi\iii_{n_k},r_{n_k}}\circ \varphi_{\jjj_{n_k}^{i}}(X)\cap B(0,1))\subset X,
\end{equation*}
so by taking the limit, we see that $G_i(T_i)+y_i\subset X$ which proves \eqref{it:decomp2}.

Finally, to prove \eqref{it:decomp3}, denote by $A_{\jjj_{n_k}^{i}}$ the linear part of $\varphi_{\jjj_{n_k}^{i}}$. Then, by the definition of $\Phi(\pi\iii_{n_k},r_{n_k})$, we have that
\begin{equation*}
    \|r_{n_k}A_{\jjj_{n_k}^{i}}^{-1}\| = r_{n_k}\alpha_2(A_{\jjj_{n_k}^{i}})^{-1} \geq 1.
\end{equation*}
Since $A\mapsto\|A\|$ is continuous, we have $\|G_i\|=\lim_{k\to\infty}\|r_{n_k}A_{\jjj_{n_k}^{i}}^{-1}\|\geq 1>0$ and, in particular, $\mathrm{rank}(G_i)\geq 1$ for all $i \in \{1,\ldots,K\}$.

Let us next assume that $X$ is dominated. Fix $i\in I$ and for simplicity, denote $\jjj_{n_k}^{i}$ by $\jjj_k$. First observe that the sequence $(|\jjj_k|)_{k \in \N}$ is unbounded, since if it was bounded by some number $L\in\N$, we would have
\begin{equation*}
    r_{n_k} \ge \alpha_2(A_{\jjj_k})\geq (\min_{j \in \{1,\ldots,N\}} \alpha_2(A_j))^L > 0
\end{equation*}
for all $k\in\N$ contradicting the fact that $\lim_{k \to \infty} r_{n_k} = 0$. By domination, there exist $C>0$ and $0<\tau<1$ such that
\begin{align*}
    |\det(r_{n_k}A_{\jjj_k}^{-1})|&=\frac{r_{n_k}^2}{\det(A_{\jjj_k})}=\frac{r_{n_k}^2}{\alpha_1(A_{\jjj_k})\alpha_2(A_{\jjj_k})}\\
    &\leq\frac{\alpha_2(A_{\jjj_k})}{\alpha_1(A_{\jjj_k})\min_{j\in\{1,\ldots,N\}}\alpha_2(A_j)} \leq \frac{C}{\min_{j\in\{1,\ldots,N\}}\alpha_2(A_j)}\tau^{|\jjj_k|}.
\end{align*}
Since $|\jjj_k|$ is unbounded, we see that $\mathrm{rank}(G_i)=1$. Finally, since $r_{n_k}A_{\jjj_{n_k}^i}^{-1}$ converges to the linear map $G_i$, which is a rank one map, $\im(G_i)\in X_F$.
\end{proof}

We are now ready to prove Proposition \ref{prop:assouad-upper-bound}.

\begin{proof}[Proof of Proposition \ref{prop:assouad-upper-bound}]
  Let $T\in\Tan(X)$ and $\{T_i\}_{i\in I}$ be a tangent decomposition of $T$ given by Lemma \ref{lemma:tangent-decomp}. Notice that, since $T=\bigcup_{i\in I}T_i$, we have $\dimh(T)=\max_{i\in I}\dimh(T_i)$. Let $i \in I$ be the index which achieves this maximum. By Lemma \ref{lemma:tangent-decomp}, we have $G_i(T_i)+y_i\subset X\cap (\im(G_i)+y_i)$ and hence,
  \begin{equation*}
    \dimh(G_i(T_i)+y_i) \le \dimh(X \cap (\im(G_i)+y_i)).
  \end{equation*}
If $\mathrm{rank}(G_i)=2$ then $\dimh(G_i(T_i)+y_i)\leq\dimh(X)$. On the other hand, if $\mathrm{rank}(G_i)=1$ then $x \mapsto G_ix+y_i$ is bi-Lipschitz equivalent to $\proj_{\ker(G_i)^\bot}$, thus we have
  \begin{align*}
    \dimh(T_i) &\le \dimh(\R \times \proj_{\ker(G_i)^\bot}(T_i)) \\ 
    &= 1+\dimh(\proj_{\ker(G_i)^\bot}(T_i)) = 1+\dimh(G_i(T_i)+x).
  \end{align*}
  Therefore,
  \begin{equation*}
    \dimh(T) = \dimh(T_i) \le 1+\dimh(X \cap (\im(G_i)+y_i))
  \end{equation*}
  and we have shown the first claim. By Lemma \ref{lemma:kaenmakiojalarossi}, the second claim follows immediately from the first claim.

 If $X$ is dominated, then $\mathrm{rank}(G_i)=1$ above by the last assertion of Lemma \ref{lemma:tangent-decomp} and the last claim follows.
\end{proof}

\begin{figure}
    \centering
    \includegraphics[width=0.8\linewidth]{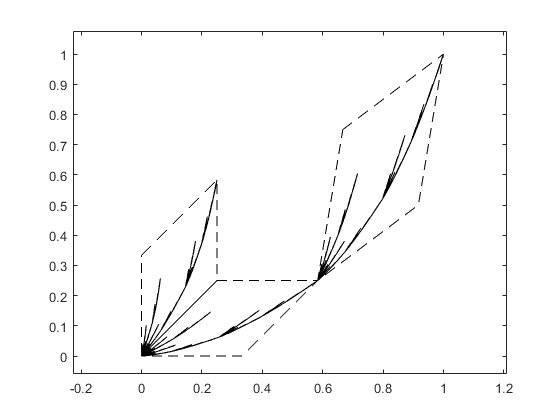}
    \caption{The self-affine set of Example \ref{ex:counterexample}. The first level cylinders are illustrated by dashed lines.}
    \label{fig:counterexample}
\end{figure}

\begin{example}\label{ex:counterexample}
In this example, we exhibit an affine IFS $(\fii_1,\fii_2,\fii_3)$, where $\fii_i(x)=A_ix+b_i$ for all $x \in \R^2$, with self-affine set $X$ satisfying the SOSC but not the WBNC such that $(A_1,A_2,A_3)$ is dominated and 
\begin{equation}\label{eq:counterexample}
1+\sup_{\substack{x\in X\\ V\in X_F}}\dimh(X\cap (V+x))<\dima(X) = 2.
\end{equation}
In particular, this shows that the upper bound of Proposition \ref{prop:assouad-upper-bound} can fail if the WBNC is replaced by the SOSC. 
Let
\begin{equation*}
  A_1=
  \begin{pmatrix}
    \frac{1}{3}& \frac{1}{4}\\
    0&\frac{1}{4}
  \end{pmatrix},\qquad
  A_2=
  \begin{pmatrix}
    \frac{1}{4}& 0\\
    \frac{1}{4}&\frac{1}{3}
  \end{pmatrix},\qquad
  A_3=
  \begin{pmatrix}
    \frac{1}{3}& \frac{1}{12}\\
    \frac{1}{4}&\frac{1}{2}
  \end{pmatrix}
\end{equation*}
and
\begin{equation*}
  b_1=(0,0), \qquad b_2=(0,0), \qquad b_3=(\tfrac{7}{12},\tfrac{1}{4}).
\end{equation*}
For illustration of the associated self-affine set $X$, see Figure \ref{fig:counterexample}. Since $\bar{0}=(0,0)$ is a fixed point for both $\fii_1$ and $\fii_2$, we have $\bar{0} \in \fii_1(X) \cap \fii_2(X)$ and $X$ does not satisfy the SSC. However, $X$ clearly satisfies the SOSC with the open set $U=(0,1)^2$. Furthermore, it is not difficult to see that for any $M\in\N$ there exists $r>0$ such that
\begin{equation*}
    \#\Phi(\bar{0},r)\geq M,
\end{equation*}
so $X$ does not satisfy the WBNC.

For each $\varepsilon\geq0$ let $\CC_\varepsilon\subset\RP$ be the cone having the lines $\langle(1,-\varepsilon)\rangle$ and $\langle(-\varepsilon,1)\rangle$ as boundaries, and containing the line $\langle(1,1)\rangle$. It is easy to see that $\CC_\varepsilon$ is strongly invariant with respect to $(A_1,A_2,A_3)$ for every sufficiently small $\varepsilon>0$ and hence, $(A_1,A_2,A_3)$ is dominated. We also have $A_i\CC_0\subset\CC_0$ for all $i \in \{1,2,3\}$. Let $\mathcal{D}_0 \subset \RP$ be the cone having the lines $\langle(3,-1)\rangle$ and $\langle (1,-3)\rangle$ as boundaries, and containing the line $\langle(1,-1)\rangle$. It is easy to see that $A_i^{-1}\mathcal{D}_0\subset \mathcal{D}_0$ and that $(3,-1)$ and $(1,-3)$ are eigenvectors of $A_1$ and $A_2$, respectively. Therefore, $X_F$ is not a singleton, and furthermore, by Lemma~\ref{lemma:alternate-furstenberg-dir}, $X_F\subset\mathcal{D}_0$. Simple algebraic manipulations show that
\begin{equation}\label{eq:normbound}
    \|A_i\|<0.62,\qquad \min_{V\in\CC_0}\|A_i|_V\|\geq\frac13,\qquad\max_{V\in\mathcal{D}_0}\|A_i^{-1}|_V\|^{-1}\leq\frac{7}{12}\sqrt{\frac{5}{17}}<0.32
\end{equation}
for all $i \in \{1,2,3\}$.

Let us now show that $\dima(X)=2$. As $X \subset \R^2$, it is enough to prove $\dima(X) \ge 2$, and for this, by recalling Lemma \ref{lemma:kaenmakiojalarossi}, we construct a suitable weak tangent. Let us define $T_n \colon \R^2 \to \R^2$ by setting
\begin{equation*}
    T_n(x)=M_{\bar{0},3^{-n}}(x)=3^nx
\end{equation*}
for all $x \in \R^2$ and $n\in\N$, and let $T$ be the Hausdorff limit of the sequence $T_n(X)\cap B(0,1)$. If we can show that $T = Q_1$, where $Q_1 = B(0,1)\cap [0,1]^2$ is the closed first quadrant of the unit ball, then $\dima(X) \ge \dimh(T) \ge 2$ as required. To that end, consider the set $\Sigma^{1,2}_n= \{i_1\cdots i_n\in\Sigma_n\colon i_k\in\{1,2\} \text{ for all }k \in \{1,\ldots,n\}\}$. Let $e_1=(1,0)$ and $e_2=(0,1)$. Note that, since $\|A_{\iii}e_k\| \ge 3^{-n}$ for both $k \in \{1,2\}$ and $\iii\in\Sigma_n^{1,2}$ by \eqref{eq:normbound}, we have
$$
  T_n(\varphi_{\iii}(U))\cap T_n(\varphi_{\jjj}(U))=\emptyset\qquad\text{and}\qquad\bigcup_{\iii\in\Sigma_n^{1,2}} T_n(\varphi_{\iii}(\overline{U}))\cap B(0,1)=Q_1,
$$  
for all $\iii,\jjj\in\Sigma_n^{1,2}$ with $\iii\neq\jjj$. Then for any $\iii\in\Sigma_n^{1,2}$, the central angle $\alpha_\iii$ of the sector $T_n(\varphi_{\iii}(U))\cap B(0,1)$ is 
\begin{equation}\label{eq:central-angle}
  \alpha_{\iii}=\sphericalangle(\langle A_{\iii}e_1\rangle,\langle A_{\iii}e_2\rangle)=\arcsin\left(\frac{|\det(A_{\iii})|}{\| A_{\iii}e_1\|\| A_{\iii}e_2\|}\right)\leq\arcsin((\tfrac{3}{4})^n),
\end{equation}
where in the last inequality, we used \eqref{eq:normbound}. Let $y\in Q_1$ and notice that for every $n\in\N$ there exists $\iii\in\Sigma^{1,2}_n$ such that $y\in T_n(\varphi_{\iii}(\overline{U}))$. Since $X$ contains a continuous path between the points $(0,0)$ and $(1,1)$, there is a point $x\in\partial B(0,1)\cap T_n(\varphi_{\iii}(U))$ such that $T_n(\varphi_{\iii}(X))\cap B(0,1)$ contains a continuous path between the points $(0,0)$ and $x$. In particular, together with \eqref{eq:central-angle}, this implies that there exists a point $z_n\in T_n(\varphi_{\iii}(X))\cap B(0,1)$ such that
\begin{equation*}
    |y-z_n|\leq \arcsin((\tfrac{3}{4})^n).
\end{equation*}
Therefore, $T_n(X)\cap B(0,1)\to Q_1$ finishing the proof of $\dima(X)\geq 2$.

It suffices to show that there exists $c<1$ such that $\dimh(X\cap (V+x))\leq c$ for all $x\in X$ and $V\in X_F$. Fix $x\in X$ and $V\in X_F$, and notice that, by \eqref{eq:normbound}, we have
\[
\begin{split}
\diam(\varphi_{\iii}(\overline{U})\cap(V+x))&=\|A_{\iii}|_{A_{\iii}^{-1}V}\|\diam(\overline{U}\cap(A_{\iii}^{-1}V+\varphi_{\iii}^{-1}(x)))\\
&\leq\|A_{\iii}|_{A_{\iii}^{-1}V}\|\sqrt{2}=\|A_{\iii}^{-1}|_V\|^{-1}\sqrt{2}\leq(0.32)^n\sqrt{2}
\end{split}
\]
for all $n\in\N$ and $\iii\in\Sigma_n^{1,2}$. Hence, for each $s>-\frac{\log 3}{\log0.32}$, we see that
\[
\begin{split}
\mathcal{H}^s(X\cap(V+x))\leq\lim_{n\to\infty}2^{s/2}\sum_{\iii\in\Sigma_n}\|A_{\iii}^{-1}|_V\|^{-s}\leq \lim_{n\to\infty}2^{s/2}3^n(0.32)^{sn}=0,
\end{split}
\]
and so $\dimh(X\cap (V+x))\leq-\frac{\log 3}{\log0.32}<1$.
\end{example}

\begin{remark}
    For the purpose of this remark, let us briefly recall some definitions. For each $A \in GL_2(\R)$ and $s \ge 0$, the \emph{singular value function} is
    \begin{equation*}
        \fii^s(A) =
        \begin{cases}
            \alpha_1(A)^s, &\text{if } 0 \le s \le 1, \\
            \alpha_1(A)\alpha_2(A)^{s-1}, &\text{if } 1 < s \le 2, \\
            (\alpha_1(A)\alpha_2(A))^{s/2}, &\text{if } s > 2.
        \end{cases}
    \end{equation*}
    The value $\fii^s(A)$ represents a measurement of the $s$-dimensional volume of the ellipse $A(B(0,1))$. For each $\A \in GL_2(\R)^N$ and $s \ge 0$, the \emph{pressure} is
    \begin{equation*}
        P(\A,s) = \lim_{n \to \infty} \frac{1}{n}\log\sum_{\iii \in \Sigma_n} \fii^s(A_\iii).
    \end{equation*}
    As the singular value function is sub-multiplicative, the limit above exists by Fekete's lemma. It is also easy to see that the pressure $P(\A,s)$ is continuous and strictly decreasing as a function of $s$ with $P(\A,0) \ge 0$ and $\lim_{s \to \infty}P(\A,s) = -\infty$. We may thus define the \emph{affinity dimension} by setting $\dimaff(\A)$ to be the minimum of $2$ and the unique $s \ge 0$ for which $P(\A,s)=0$. If $X$ is a self-affine set, then $\dimaff(X)$ denotes the affinity dimension of the associated tuple of matrices.
    Also recall that a self-affine set is \emph{strongly irreducible} if no finite collection of lines in $\RP$ is preserved by all of the matrices in the tuple.
    
    In \cite[Example 3.3]{BaranyKaenmakiYu2021-preprint}, the authors answer a question posed by Fraser in \cite{Fraser2020}, by constructing an example of a self-affine set $X$ satisfying $\diml(X)< \dimh(X)=\dimaff(X)< \dima(X)$, where $\diml$ denotes the \emph{lower dimension}; see  \cite[\S 3.1]{Fraser2020} for the definition. The construction is strongly based on the properties of an underlying self-affine carpet, so it is an interesting question whether this behaviour is possible when $X$ has no reducible subsystems. Let $X$ be the self-affine set defined in Example \ref{ex:counterexample}. By an argument similar to the calculation of the Assouad dimension in Example \ref{ex:counterexample}, it is easy to see that $X$ has a line segment as a weak tangent at the point $(1,1)$, and therefore $\diml(X)\le1$ by \cite[Theorem 1.1]{FHKY2019}. Moreover, since the matrices $(A_1,A_2,A_3)$ have pairwise distinct eigenvectors, the strong irreducibility follows and by \cite[Theorem 1.1]{BaranyHochmanRapaport2019} and a simple calculation using \eqref{eq:normbound}, we have $1<\dimh(X)=\dimaff(X)<2$. Thus, Example \ref{ex:counterexample} shows that, in the absence of strong separation, the strict inequalities $\diml(X)< \dimh(X)=\dimaff(X)< \dima(X)$ are possible for self-affine systems with no reducible subsystems.
\end{remark}

\section{Self-affine sets with large projections} \label{sec:self-affine-large}
In this section, we show that if all the projections of the self-affine set have maximal dimension, then we have equality in Proposition \ref{prop:assouad-upper-bound}. We also show that the supremum in the statement can be replaced by a maximum. The following theorem is the main result of this section and Proposition \ref{cor:sosc-assouad} below assures that it generalizes B\'ar\'any, K\"aenm\"aki, and Yu \cite[Theorem 3.2]{BaranyKaenmakiYu2021-preprint} by relaxing the SSC to a separation condition which allows slight overlapping.

\begin{theorem}\label{thm:dimasosc}
  Let $X$ be a dominated self-affine set satisfying the WBNC such that $\dimh(X)\geq 1$ and $\dimh(\proj_{V^{\perp}}(X))=1$ for all $V \in X_F$, then
  \begin{align*}
    \dima(X)&=1+\max_{\atop{x\in X}{V\in X_F}}\dimh(X\cap(V+x))\\ 
    &=1+\max_{\atop{x\in X}{V\in \RP\setminus Y_F}}\dima(X\cap(V+x)).
  \end{align*}
\end{theorem}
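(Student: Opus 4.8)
The plan is to establish the two equalities in sequence, exploiting Proposition \ref{prop:assouad-upper-bound} for the upper bound and constructing explicit weak tangents for the matching lower bound. First I would observe that Proposition \ref{prop:assouad-upper-bound} already gives
\begin{equation*}
  \dima(X) \le 1 + \sup_{\atop{x\in X}{V\in X_F}} \dimh(X\cap(V+x)),
\end{equation*}
so the content of the first equality is the reverse inequality together with upgrading the supremum to a maximum. For the reverse inequality, I would fix $V \in X_F$ and $x \in X$ realizing (or nearly realizing) the supremum, and build a weak tangent of $X$ whose Hausdorff dimension is at least $1 + \dimh(X\cap(V+x))$; by Lemma \ref{lemma:kaenmakiojalarossi} this forces $\dima(X) \ge 1 + \dimh(X\cap(V+x))$. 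The key geometric idea is that near a point whose symbolic address has initial behaviour aligned with $V$, the rescaling maps $M_{\pi\iii_n,r_n}$ stretch the thin direction of the cylinder ellipses so that, in the limit, the slice $X\cap(V+x)$ gets ``smeared'' across the long axis. This is where the hypothesis $\dimh(\proj_{V^{\perp}}(X))=1$ enters: it guarantees that the projection in the complementary direction fills up a full unit of dimension, so the tangent contains a product-like set of dimension $1 + \dimh(X\cap(V+x))$ rather than degenerating. The assumption $\dimh(X) \ge 1$ is used to ensure the slices are nontrivial and the relevant supremum is genuinely positive.

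Next I would address why the supremum over $(x,V)$ is attained. The domination hypothesis makes $X_F = \overline{\vartheta}_2(\Sigma)$ compact (Lemma \ref{lemma:alternate-furstenberg-dir}), and $X$ itself is compact. I expect the map $(x,V) \mapsto \dimh(X\cap(V+x))$ to fail to be continuous, so the argument for attainment should instead go through a compactness-and-limiting argument on weak tangents: taking a sequence $(x_k,V_k)$ approaching the supremum, one passes to convergent subsequences $x_k \to x_\infty$, $V_k \to V_\infty \in X_F$, and uses the tangent-decomposition machinery of Lemma \ref{lemma:tangent-decomp} to produce a single tangent achieving the limiting dimension. Combined with the upper bound from Proposition \ref{prop:assouad-upper-bound}, this pins down $\dima(X)$ exactly and shows the extremal slice dimension is realized.

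For the second equality, I would compare slices in the Furstenberg directions $X_F$ against all non-Furstenberg directions $\RP \setminus Y_F$, and replace $\dimh$ by $\dima$ on the slices. The natural strategy is to show that for $V \notin Y_F$ the slice $X \cap (V+x)$ is itself a dominated self-affine-type object (or admits weak tangents that are), so that $\dima$ of the slice equals the maximal $\dimh$ over its own tangents by Lemma \ref{lemma:kaenmakiojalarossi}; these sub-tangents should be identifiable with slices in $X_F$ directions of the ambient set, matching the two expressions. The role of $\RP \setminus Y_F$ (as opposed to $X_F$) is that slices transverse to the forward directions $Y_F$ inherit a self-affine structure under the symbolic dynamics, whereas slices parallel to $Y_F$ could be degenerate; the duality between $Y_F$ and $X_F$ under $\A \leftrightarrow \A^{-1}$ (noted after Lemma \ref{lemma:rossi}) is what links the two halves of the statement.

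The main obstacle I anticipate is the lower bound construction for the first equality: one must carefully choose the sequence of base points $\pi\iii_n$ and scales $r_n$ so that the induced renormalizations simultaneously (i) align the long axis of the cylinders with the projection direction $V^\perp$ and (ii) preserve a definite amount of the slice $X\cap(V+x)$ in the limit. Making the product structure $1 + \dimh(\text{slice})$ rigorous requires controlling the interaction between the projection $\proj_{V^\perp}$ achieving full dimension $1$ and the transverse slice, rather than merely bounding each factor separately; the WBNC is essential here to keep the number of overlapping cylinders bounded so that the Hausdorff limit genuinely carries the expected dimension. The attainment of the maximum and the transfer from $\dimh$ to $\dima$ on slices I expect to be more routine given the compactness afforded by domination.
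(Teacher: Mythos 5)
Your reduction of the problem is sound as far as it goes: the upper bound via Proposition \ref{prop:assouad-upper-bound} is exactly how the paper starts, and your comb intuition for the lower bound is the right picture. But the step you yourself flag as the ``main obstacle'' is precisely where the proposal has a genuine gap, and the mechanism you sketch would not close it. A union of dimension-one ``teeth'' over an $s$-dimensional base does \emph{not} automatically have Hausdorff dimension $1+s$; no product structure is available, and bounding ``each factor'' is indeed insufficient. The paper resolves this with two ingredients you do not supply. First, Lemma \ref{lemma:dim-1-slice} shows that through \emph{every} interior point of each piece $T_i$ of the tangent decomposition (Lemma \ref{lemma:tangent-decomp}) there is a slice $T\cap(W_i+y)$ of Hausdorff dimension exactly $1$, where crucially the direction $W_i\in Y_F$ is \emph{constant on $T_i^{\circ}$} --- this constancy is nontrivial and is proved via domination (Lemma \ref{lemma:convergence}); the hypothesis $\dimh(\proj_{V^{\perp}}(X))=1$ enters here because the tooth through $y$ is a rank-one affine image $P_y(X)$ with $\ker(P_y)\in X_F$, bi-Lipschitz to a projection of $X$. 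Second, the conversion of this comb structure into $\dimh(T)\geq 1+\dimh(T\cap V)$ is done by putting an $s$-Frostman measure on $T^{\circ}\cap V$, selecting a piece $T_i^{\circ}\cap V$ of positive measure, and applying Marstrand's slicing theorem \emph{inside the tangent} in the direction $W_i$: since $V\in X_F\subset\RP\setminus Y_F$, one has $V\neq W_i$, and $1=\dimh(T\cap(W_i+y))\leq\dimh(T)-s$ for $\mu$-a.e.\ $y$. This transversality requirement, not any inherited self-affine structure of slices, is the true reason $\RP\setminus Y_F$ appears in the statement; your claim that slices transverse to $Y_F$ ``inherit a self-affine structure'' is false in general and is not needed.

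Two further points. Your attainment argument --- passing to limits of near-optimal pairs $(x_k,V_k)$ --- would fail, because $(x,V)\mapsto\dimh(X\cap(V+x))$ has no semicontinuity that survives such limits, and the tangent machinery does not repair this for a limiting pair. The paper gets attainment for free: Lemma \ref{lemma:kaenmakiojalarossi} realizes $\dima(X)=\dimh(T_{\max})$ for a single tangent $T_{\max}$, Proposition \ref{prop:assouad-upper-bound} then outputs one \emph{specific} pair $(x,V)$ with $\dima(X)\leq 1+\dimh(X\cap(V+x))$, and the lower-bound argument above holds for \emph{every} pair, so that specific pair is a maximizer. Finally, for upgrading $\dimh$ to $\dima$ on slices in the second equality, the missing device is Lemma \ref{lemma:small-slice} (valid for arbitrary compact sets): a weak tangent of the slice realizing $\dima(X\cap(V+x))$ embeds, after recentering the blow-ups at points of the slice, into $T\cap V$ for some $T\in\Tan(X)$, whence $\dimh(T\cap V)\geq\dima(X\cap(V+x))$; repeating the Frostman--Marstrand step for arbitrary $W\in\RP\setminus Y_F$ and comparing with $\dimh(T_{\max})$ completes the proof. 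In short, your skeleton matches the paper's, but the load-bearing lemmas --- constancy of the tooth direction, the Frostman--Marstrand conversion, and the tangent-of-slice embedding --- are absent, and the attainment and second-equality plans as stated would not work.
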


The proof of the theorem uses ideas introduced in \cite[\S 5]{BaranyKaenmakiYu2021-preprint}, but the absence of strong separation induces some complications. We essentially split \cite[Lemma 5.2]{BaranyKaenmakiYu2021-preprint}, which assumes the SSC, into Lemmas \ref{lemma:dim-1-slice} and \ref{lemma:small-slice} and make two key observations to work around the lack of SSC. First of all, the intuition behind \cite[Lemma 5.2]{BaranyKaenmakiYu2021-preprint} is that the weak tangent sets of the self-affine set $X$ have a comb-like structure, where the slices of the tangent set along the direction of the teeth of the comb have full dimension, and the dimensions of the slices in directions perpendicular to the teeth have dimension comparable to some slice of the self-affine set in a Furstenberg direction. By Lemma \ref{lemma:tangent-decomp}, we know that under the bounded neighborhood condition, the situation is similar in the sense that the weak tangents are finite unions of these comb-like sets. Secondly, in \cite[\S 5]{BaranyKaenmakiYu2021-preprint}, to show that the teeth of the combs point to the same direction, the authors use the fact that any slice of the self-affine set has dimension strictly smaller than one, which does not have to be true in our setting. We work around this using domination in the following lemma.

\begin{lemma}\label{lemma:convergence}
  Let $X$ be a dominated self-affine set and $(\iii_k)_{k \in \N}$ be a sequence of infinite words in $\Sigma$. If $(n_k)_{n \in \N}$ is an increasing sequence of integers such that the limit $\lim_{k\to\infty}\vartheta_1(\iii_k|_{n_k})$ exists, then $\lim_{k\to\infty}\overline{\vartheta}_1(\iii_k)$ exists and
  \begin{equation*}
      \lim_{k\to\infty}\overline{\vartheta}_1(\iii_k) = \lim_{k\to\infty}\vartheta_1(\iii_k|_{n_k}).
  \end{equation*}
\end{lemma}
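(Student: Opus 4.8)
The plan is to reduce everything to the uniform convergence $\vartheta_1(\iii|_n) \to \overline{\vartheta}_1(\iii)$ guaranteed by Lemma \ref{lemma:rossi}\eqref{it:rossi1}. The decisive point is that this convergence is uniform \emph{in} $\iii$, so one can control the distance between $\vartheta_1(\iii_k|_{n_k})$ and $\overline{\vartheta}_1(\iii_k)$ simultaneously for the whole varying sequence, even though both the word $\iii_k$ and the truncation level $n_k$ change with $k$.

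First I would observe that, since $(n_k)_{k\in\N}$ is an increasing sequence of integers, we have $n_k \to \infty$. Set $V = \lim_{k\to\infty}\vartheta_1(\iii_k|_{n_k})$, which exists by hypothesis. By Lemma \ref{lemma:rossi}\eqref{it:rossi1}, for every $\eps > 0$ there is $N \in \N$ such that $\sphericalangle(\vartheta_1(\iii|_n),\overline{\vartheta}_1(\iii)) < \eps$ for all $\iii \in \Sigma$ and all $n \ge N$. Because $n_k \to \infty$, for all sufficiently large $k$ we have $n_k \ge N$, and applying this uniform estimate with $\iii = \iii_k$ and $n = n_k$ yields $\sphericalangle(\vartheta_1(\iii_k|_{n_k}),\overline{\vartheta}_1(\iii_k)) < \eps$. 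Hence $\sphericalangle(\vartheta_1(\iii_k|_{n_k}),\overline{\vartheta}_1(\iii_k)) \to 0$ as $k \to \infty$.

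The conclusion then follows from the triangle inequality in the metric $\sphericalangle$:
\begin{equation*}
\sphericalangle(\overline{\vartheta}_1(\iii_k),V) \le \sphericalangle(\overline{\vartheta}_1(\iii_k),\vartheta_1(\iii_k|_{n_k})) + \sphericalangle(\vartheta_1(\iii_k|_{n_k}),V).
\end{equation*}
The first term tends to $0$ by the previous step and the second by the definition of $V$, so $\overline{\vartheta}_1(\iii_k) \to V$. This establishes both that the limit $\lim_{k\to\infty}\overline{\vartheta}_1(\iii_k)$ exists and that it coincides with $\lim_{k\to\infty}\vartheta_1(\iii_k|_{n_k})$, as claimed.

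I expect no serious obstacle; the one subtlety worth emphasizing is that a naive appeal to the pointwise convergence $\vartheta_1(\iii_k|_n) \to \overline{\vartheta}_1(\iii_k)$ is insufficient, precisely because the word $\iii_k$ varies with $k$. It is the uniformity in $\iii$ from Lemma \ref{lemma:rossi}\eqref{it:rossi1} — which in turn rests on the exponential domination bound $\alpha_2(A_{\iii|_m})/\alpha_1(A_{\iii|_m}) \le C\tau^m$ holding uniformly in $\iii$ — that makes the argument go through. If one preferred to avoid invoking uniformity as a stated property, the explicit estimate $\sphericalangle(\vartheta_1(\iii|_n),\overline{\vartheta}_1(\iii)) \le \frac{2cC}{1-\tau}\tau^n$ obtained in the proof of Lemma \ref{lemma:rossi}\eqref{it:rossi2} is already uniform in $\iii$ and could be quoted directly to bound the first term by $\frac{2cC}{1-\tau}\tau^{n_k}$, which vanishes as $k \to \infty$.
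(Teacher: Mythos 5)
Your proof is correct and follows essentially the same route as the paper: both invoke the uniformity in $\iii$ of the convergence $\vartheta_1(\iii|_n)\to\overline{\vartheta}_1(\iii)$ from Lemma \ref{lemma:rossi}\eqref{it:rossi1} (using that an increasing integer sequence $n_k$ tends to infinity) and conclude via the triangle inequality for $\sphericalangle$. Your closing remark that the explicit bound $\sphericalangle(\vartheta_1(\iii|_n),\overline{\vartheta}_1(\iii))\le\frac{2cC}{1-\tau}\tau^n$ from the proof of Lemma \ref{lemma:rossi}\eqref{it:rossi2} could replace the appeal to uniformity is a nice observation, but it is an optimization of the same argument rather than a different one.
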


\begin{proof}
Let $(n_k)_{k \in \N}$ be a strictly increasing sequence of integers such that the limit $W \coloneqq \lim_{k\to\infty}\vartheta_1(\iii_k|_{n_k}) \in \RP$ exists and let $\varepsilon>0$. By Lemma \ref{lemma:rossi}\eqref{it:rossi1}, $\overline{\vartheta}_1(\iii_k)$ is well defined for every $k\in\N$ and, by the uniform convergence, we may choose $k_0\in\N$ large enough such that
\begin{equation*}
    \sphericalangle(\vartheta_1(\iii_k|_{n_k}),\overline{\vartheta}_1(\iii_k)) < \frac{\varepsilon}{2}
\end{equation*}
for all $k\geq k_0$. By the assumption, by making $k_0$ larger if needed, we also have
\begin{equation*}
    \sphericalangle(\vartheta_1(\iii_k|_{n_k}),W) < \frac{\varepsilon}{2},
\end{equation*}
for all $k\geq k_0$. Thus, by the triangle inequality, we have
\begin{equation*}
    \sphericalangle(\overline{\vartheta}_1(\iii_k),W)\leq \sphericalangle(\overline{\vartheta}_1(\iii_k),\vartheta_1(\iii_k|_{n_k}))+\sphericalangle(\vartheta_1(\iii_k|_{n_k}),W) < \varepsilon
\end{equation*}
and therefore, $\lim_{k\to\infty}\overline{\vartheta}_1(\iii_k)=W$.
\end{proof}

We abuse notation by denoting the intersection of $T\in\Tan(X)$ with the open unit ball by $T^{\circ}$. Similarly, if $\{T_i\}_{i\in I}$ is a tangent decomposition of $T$, then we let $T_i^{\circ}=T_i\setminus \partial B(0,1)$. This should not cause any confusion, since we will not be referring to the actual interior of $T$ at any point. Furthermore, by the rank, image, and kernel of an affine map, we mean the rank, image, and kernel of its linear part.

\begin{lemma}\label{lemma:dim-1-slice}
  Let $X$ be a dominated self-affine set satisfying the WBNC such that, $\dimh(X)\geq 1$, and $\dimh(\proj_{V^{\perp}}(X))=1$ for all $V \in X_F$. Let $T\in\Tan(X)$ and $\{T_i\}_{i\in I}$ be a tangent decomposition of $T$ given by Lemma \ref{lemma:tangent-decomp}. Then for every $i \in I$ there exists $W_i\in Y_F$ such that
  \begin{equation*}
      \dimh(T\cap(W_i+y))= 1
  \end{equation*}
  for all $y\in T_i^{\circ}$.
\end{lemma}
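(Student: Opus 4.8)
The plan is to read off the common direction $W_i$ of the teeth from the cylinders approximating the tangent, to dispatch the upper bound for free, and to extract the lower bound from the hypothesis on projections. Throughout, I fix sequences $(\iii_{n_k})$ and $(r_{n_k})$ with $M_{\pi\iii_{n_k},r_{n_k}}(X)\cap B(0,1)\to T$, write $\Phi(\pi\iii_{n_k},r_{n_k})=\{\fii_{\jjj_{n_k}^i}\}_{i=1}^K$ as in the proof of Lemma~\ref{lemma:tangent-decomp}, and set $g_k^i=M_{\pi\iii_{n_k},r_{n_k}}\circ\fii_{\jjj_{n_k}^i}$, so that $g_k^i(X)\cap B(0,1)\to T_i$ and $g_k^i$ has linear part $r_{n_k}^{-1}A_{\jjj_{n_k}^i}$. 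By domination this linear map expands by $\alpha_1(A_{\jjj_{n_k}^i})/r_{n_k}\to\infty$ along $\vartheta_1(\jjj_{n_k}^i)$ and scales by at most $1$ transversally.

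First I would define the direction. Passing to a subsequence, I put $W_i=\lim_k\vartheta_1(\jjj_{n_k}^i)$; by Lemma~\ref{lemma:rossi}\eqref{it:rossi3} and Lemma~\ref{lemma:alternate-furstenberg-dir} this limit lies in $\overline{\vartheta}_1(\Sigma)=Y_F$. The upper bound $\dimh(T\cap(W_i+y))\le 1$ is then immediate, since $T\cap(W_i+y)$ is contained in the line $W_i+y$. Because $T_i\subset T$, it remains only to prove $\dimh(T_i\cap(W_i+y))\ge 1$ for every $y\in T_i^\circ$.

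For this lower bound I would identify the teeth with a blow-up of the projection $\proj_{V^\perp}(X)$, where $V=\im(G_i)\in X_F$ is the image of the rank-one map furnished by Lemma~\ref{lemma:tangent-decomp}. A short computation with the singular value decomposition of $r_{n_k}^{-1}A_{\jjj_{n_k}^i}$ shows that $\ker(G_i)=W_i$ and that $V^\perp$ is exactly the strongly expanding direction; consequently the coordinate along the teeth is, up to bounded distortion, the orthogonal projection $\proj_{V^\perp}$ on $X$ dilated by the factor tending to infinity. Before passing to the limit I would invoke domination together with Lemma~\ref{lemma:convergence} to force the expanding directions $\vartheta_1(\jjj_{n_k}^i\hhh)$ of all deeper subcylinders, and the genuine Furstenberg directions $\overline{\vartheta}_1$ of their infinite extensions, to converge to the single line $W_i$. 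This is what makes $T_i$ genuinely comb-shaped with all teeth parallel to $W_i$, so that the transverse silhouette $\proj_{W_i^\perp}(T_i)$ is governed by the $X_F$-slice of $X$ in direction $V$ and the teeth are left to carry $\proj_{V^\perp}(X)$.

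I expect the heart of the matter to be the realization of each tooth as a blow-up of $\proj_{V^\perp}(X)$, carried out uniformly in $y$. The subtlety is that Hausdorff limits do not commute with projections: the naive preimage $(g_k^i)^{-1}(W_i+y)$ is a segment in the expanding direction, whose intersection with $X$ is a single slice in direction $V^\perp$ and could have dimension below $1$. The point is that a tooth of the limit set $T_i$ collects points $g_k^i(x_k)$ that merely approach $W_i+y$, hence come from a thin slab about that line rather than from the line itself; choosing the slab to shrink slowly enough, its $V^\perp$-projection recovers $\proj_{V^\perp}(X)$ on a window of size comparable to $\alpha_1(A_{\jjj_{n_k}^i})^{-1}r_{n_k}$, which after dilation is a full blow-up of $\proj_{V^\perp}(X)$. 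Since $\proj_{V^\perp}(X)$ is a subset of a line with $\dimh(\proj_{V^\perp}(X))=1$, and therefore with Assouad dimension $1$, such a blow-up can carry dimension arbitrarily close to $1$, and I would transfer a measure of dimension close to $1$ onto the tooth, using the WBNC to bound the multiplicity created by the overlaps. The main obstacles are thus calibrating the slab thickness against the wandering of the $V$-direction slices and securing the lower bound simultaneously for every $y\in T_i^\circ$; I would address the latter by exploiting the near translation invariance of $T_i$ along $W_i$ produced by blowing up in the dominated expanding direction, which renders all teeth mutually bi-Lipschitz. Together with the trivial upper bound this gives $\dimh(T\cap(W_i+y))=1$.
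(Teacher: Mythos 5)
Your setup matches the paper's: extracting $W_i=\lim_k\vartheta_1(\jjj_{n_k}^i)\in Y_F$, dispatching the upper bound since the slice lies in a line, and invoking Lemma~\ref{lemma:convergence} to keep the teeth parallel are all exactly the right moves, and your identification of $V^\perp$ with the strongly expanding direction is correct. But your lower bound has a genuine gap at precisely the two places you wave at. First, nothing guarantees that the projection of $X\cap(\text{slab})$ ``recovers $\proj_{V^{\perp}}(X)$ on a window'': the part of $X$ lying over the window may sit at a definite transverse distance from the line $(g_k^i)^{-1}(W_i+y)$ and so be missed by the slab no matter how slowly you let its thickness shrink; you name this calibration problem but supply no mechanism for it. Second, and fatally, even granting recovery you would still need that a blow-up of $\proj_{V^{\perp}}(X)$ at a \emph{prescribed} sequence of windows and scales (dictated by $y$ and by $\alpha_1(A_{\jjj_{n_k}^i})^{-1}r_{n_k}$) has Hausdorff dimension close to $1$. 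That does not follow from $\dimh(\proj_{V^{\perp}}(X))=1$, nor from the resulting Assouad dimension being $1$: Lemma~\ref{lemma:kaenmakiojalarossi} only produces \emph{some} weak tangent of maximal dimension, whereas the specific blow-ups of a dimension-one subset of a line arising here can be very small (think of an interval union a sparse set, with the windows landing in the sparse part). Your phrase ``such a blow-up \emph{can} carry dimension arbitrarily close to $1$'' is an existence statement; you need it for the tangents you actually get.

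The paper avoids both problems with one move your plan is missing: zoom into a genuine sub-cylinder rather than a slab. For $y\in T_i^{\circ}$ one picks $\jjj_k\in[\jjj_{n_k}^i]$ with $M_{\pi\iii_{n_k},r_{n_k}}(\pi\jjj_k)\to y$ and the depth $m_k$ with $\alpha_1(A_{\jjj_k|_{m_k}})\approx\delta r_{n_k}$; then $M_{\pi\iii_{n_k},r_{n_k}}\circ\varphi_{\jjj_k|_{m_k}}$ converges along a subsequence to an affine map $P_y$ which, by domination and \cite[Corollary 2.4]{BaranyKaenmakiMorris2018}, has rank one and norm bounded below by a multiple of $\delta$. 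The set $P_y(X)$ lies inside $T\cap(W_y+y)$ and is a bi-Lipschitz copy, at the definite scale $\delta$, of the \emph{entire} projection $\proj_{\ker(P_y)^{\perp}}(X)$ --- so no blow-up of the projection ever enters and the hypothesis applies verbatim, once one proves $\ker(P_y)\in X_F$; this last point is a separate argument with the normalized inverses $B_y=\lim_k \|A_{\jjj_k|_{m_k}}^{-1}\|^{-1}A_{\jjj_k|_{m_k}}^{-1}$, showing $\ker(P_y)=\im(B_y)$. Note also that the paper neither claims nor needs your assertion that the relevant kernel direction is the fixed $V=\im(G_i)$ from the level-$n_k$ decomposition: $\ker(P_y)$ is a \emph{backward} direction, which does not stabilize under extending the word, and may depend on $y$; only the image direction is constant, $W_y=W_i$, via the double application of Lemma~\ref{lemma:convergence} exactly as in your sketch. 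Likewise your closing appeal to ``near translation invariance of $T_i$ along $W_i$'' making the teeth mutually bi-Lipschitz is unsubstantiated and unnecessary once the cylinder argument is run at each $y$ separately.
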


\begin{proof}
Let $(\iii_k)_{k \in \N}$ be a sequence of infinite words in $\Sigma$ and $(r_k)_{k \in \N}$ be a sequence of positive real numbers converging to zero such that
\begin{equation*}
    M_{\pi\iii_k,r_k}(X)\cap B(0,1)\to T.
\end{equation*}
Recall from the proof of Lemma \ref{lemma:tangent-decomp} that there exists a sequence $(n_k)_{k \in \N}$ of integers and, for each $i \in I$, finite words $\jjj_{n_k}^i \in \Sigma_*$ and sets $T_i$ such that
\begin{equation*}
  M_{\pi\iii_{n_k},r_{n_k}} \circ \varphi_{\jjj_{n_k}^i}(X) \cap B(0,1) \to T_i
\end{equation*}
for all $i \in I$ in Hausdorff distance. Fix $y\in T^{\circ}$ and choose $i\in I$ such that $y\in T_i^{\circ}$. Since $y\not\in\partial B(0,1)$, there is $\delta>0$ depending only on $y$ such that $B(y,2\delta)\subset B(0,1)$. Therefore, there are infinite words $\jjj_k\in[\jjj_{n_k}^i]$ such that $M_{\pi\iii_{n_k},r_{n_k}}(\pi(\jjj_k))\to y$ and
\begin{equation}\label{eq:Py-inclusion}
    M_{\pi\iii_{n_k},r_{n_k}}(X\cap B(\pi\jjj_k,\delta r_{n_k}))\subset M_{\pi\iii_{n_k},r_{n_k}}\circ \varphi_{\jjj_{n_k}^{i}}(X)\cap B(0,1)
\end{equation}
for all large enough $k\in\N$. Let $m_k\geq n_k$ be the unique integer which satisfies
\begin{equation}\label{eq:rk_estimate}
    \alpha_1(A_{\jjj_k|_{m_k}})\leq \delta r_{n_k}< \alpha_1(A_{\jjj_k|_{m_k-1}}).
\end{equation}
By again passing to a sub-sequence, if necessary, there exists an affine map $P_y$ such that
\begin{equation*}
    M_{\pi\iii_{n_k},r_{n_k}}\circ\varphi_{\jjj_k|_{m_k}}\to P_y
\end{equation*}
in the uniform convergence in $X$. By compactness and \eqref{eq:Py-inclusion}, we have $y \in P_y(X) \subset T_i$ and, by domination, we have
\begin{equation*}
    \frac{\alpha_2(A_{\jjj_k|_{m_k}})}{\alpha_1(A_{\jjj_k|_{m_k}})}\leq C\tau^{m_k},
\end{equation*}
so in particular $\det(r_{n_k}^{-1}A_{\jjj_k|_{m_k}})\to 0$ as $k\to\infty$. Also, by (\ref{eq:rk_estimate}) and \cite[Corollary 2.4]{BaranyKaenmakiMorris2018}, there exists a constant $C>0$ such that we have $\|r_{n_k}^{-1}A_{\jjj_k|_{m_k}}\|\geq C\delta$ for all $k\in\N$. Therefore, we see that $\rank(P_y)=1$. Let $W_y=\im(P_y)$ and note that by Lemma \ref{lemma:alternate-furstenberg-dir}, $W_y\in Y_F$. Recall that $P_y(X)$ and $\proj_{\ker(P_y)^{\perp}}(X)$ are bi-Lipschitz equivalent, so by the assumption,
\begin{equation}\label{eq:dim-projection}
\begin{split}
    \dimh(T\cap (W_y+y))&\geq \dimh(P_y(X)\cap (W_y+y)) \\
    &\geq \dimh(\proj_{\ker(P_y)^{\perp}}(X)).
\end{split}
\end{equation}
Let us show that $\ker(P_y)\in X_F$. Observe that the linear part of the map $M_{\pi\iii_{n_k},r_{n_k}}\circ\varphi_{\jjj_k|_{m_k}}$ is $r_{n_k}^{-1}A_{\jjj_k|_{m_k}}$ and notice that this sequence converges to the linear part of $P_y$ in the uniform convergence in $X$. Denote by $A_y$ the linear part of $P_y$ and let $v$ be a unit vector in the kernel of $A_y$. Since the eigenspaces of $(r_{n_k}^{-1}A_{\jjj_k|_{m_k}})^T(r_{n_k}^{-1}A_{\jjj_k|_{m_k}})$ converge to the eigenspaces of $A_y^TA_y$ and since $\ker(A_y)=\ker(A_y^TA_y)$ is the eigenspace corresponding to the singular value $0$, we see that there is a sequence of unit vectors $v_k\to v$, such that
\begin{equation*}
    A_{\jjj_k|_{m_k}}^TA_{\jjj_k|_{m_k}}v_k=\alpha_2(A_{\jjj_k|_{m_k}})^2v_k
\end{equation*}
for all $k\in\N$. Let us define $w_k= \alpha_2(A_{\jjj_k|_{m_k}})^{-1}A_{\jjj_k|_{m_k}}v_k$. By the previous, we have
\begin{align*}
    \|w_k\|&=\alpha_2(A_{\jjj_k|_{m_k}})^{-1}\|A_{\jjj_k|_{m_k}}v_k\|=\alpha_2(A_{\jjj_k|_{m_k}})^{-1}(A_{\jjj_k|_{m_k}}v_k\,\cdot\,A_{\jjj_k|_{m_k}}v_k)^{\frac{1}{2}}\\
    &=\alpha_2(A_{\jjj_k|_{m_k}})^{-1}\langle A_{\jjj_k|_{m_k}}^TA_{\jjj_k|_{m_k}}v_k\,|\,v_k\rangle^{\frac{1}{2}}=\alpha_2(A_{\jjj_k|_{m_k}})^{-1}(\alpha_2(A_{\jjj_k|_{m_k}})^{-1}v_k\,\cdot\,v_k)^{\frac{1}{2}}\\
    &=(v_k\,\cdot\,v_k)^{\frac{1}{2}}=\|v_k\|=1,
\end{align*}
where $\cdot$ is the standard inner product on $\R^2$. Therefore, by possibly passing to a sub-sequence, we may assume that $w_k$ converges to some unit vector $w$ and that
\begin{equation*}
    \frac{(A_{\jjj_k|_{m_k}})^{-1}}{\|(A_{\jjj_k|_{m_k}})^{-1}\|}\to B_y,
\end{equation*}
for some $2\times2$ matrix $B_y$. Since $\A^{-1}=(A_1^{-1},\ldots,A_N^{-1})$ is dominated, we see as before, that $\det(\|(A_{\jjj_k|_{m_k}})^{-1}\|^{-1}(A_{\jjj_k|_{m_k}})^{-1})\to 0$, so $B_y$ has rank at most one. Since
\begin{equation*}
    \frac{(A_{\jjj_k|_{m_k}})^{-1}}{\|(A_{\jjj_k|_{m_k}})^{-1}\|}\cdot\frac{A_{\jjj_k|_{m_k}}v_k}{\alpha_2(A_{\jjj_k|_{m_k}})}=v_k,
\end{equation*}
by taking limits, we have that $B_y(w)=v$, so $v$ is a unit vector in the image of $B_y$. Therefore $B_y$ is a rank 1 matrix and $\ker(A_y)=\projspan{v}=\im(B_y)\in X_F$. By the assumption and \eqref{eq:dim-projection}, we have that $\dimh(T\cap (W_y+y))\geq 1$. The upper bound is trivial, since $T\cap (W_y+y)$ is contained in a line.

Finally, let us show that $W_y$ is constant in $T_i^{\circ}$. By passing to a sub-sequence, if necessary, we may assume that $\lim_{k\to\infty}\vartheta_1(\jjj_{n_k}^i)=W_i$ for some $W_i\in Y_F$. Notice that $W_i$ does not depend on the choice of $y\in T_i^{\circ}$. By Lemma \ref{lemma:alternate-furstenberg-dir} (or rather its proof), it is easy to see that
\begin{equation*}
    W_y=\im(P_y)=\lim_{k\to\infty}\vartheta_1(\jjj_k|_{m_k}).
\end{equation*}
Therefore, by Lemma \ref{lemma:convergence}, we have $\im(P_y)=\lim_{k\to\infty}\overline{\vartheta}_1(\jjj_k)$. Noting that $\jjj_{n_k}^i=\jjj_k|_{|\jjj_{n_k}^i|}$ and, applying Lemma \ref{lemma:convergence} again, we see that
\begin{equation*}
    W_y=\lim_{k\to\infty} \overline{\vartheta}_1(\jjj_k)=\lim_{k\to\infty} \vartheta_1(\jjj_{n_k}^i)=W_i
\end{equation*}
finishing the proof.
\end{proof}

\begin{lemma}\label{lemma:small-slice}
Let $X\subset \R^2$ be compact. Then for every $x\in X$ and $V\in\RP$ there exist $T\in\Tan(X)$ such that
\begin{equation*}
    \dimh(T\cap V) \geq \dima(X\cap(V+x)) \geq \dimh(X\cap(V+x)).
\end{equation*}
\end{lemma}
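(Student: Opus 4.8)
The plan is to reduce everything to Lemma \ref{lemma:kaenmakiojalarossi} applied to the slice itself, and then to transport a maximizing weak tangent of the slice up to a weak tangent of $X$. The second inequality requires no work: it is the general bound $\dimh \le \dima$ applied to the compact set $X \cap (V+x)$, which is recorded in the preliminaries.

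For the first inequality, write $S = X \cap (V+x)$. This set is compact and nonempty, since $0 \in V$ forces $x \in S$. Applying Lemma \ref{lemma:kaenmakiojalarossi} to $S$ produces a weak tangent $T' \in \Tan(S)$ with $\dimh(T') = \dima(S)$, realized by a sequence of points $s_n \in S$ and radii $\rho_n \downarrow 0$ with $M_{s_n,\rho_n}(S) \cap B(0,1) \to T'$ in Hausdorff distance. The crucial geometric observation is that $s_n \in V+x$, so the homothety $M_{s_n,\rho_n}$ carries the line $V+x$ onto the line $V$ through the origin (translating by $-s_n$ and rescaling both fix a line through the origin). Hence $M_{s_n,\rho_n}(S) \subset V$ and, $V$ being closed, $T' \subset V$.

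Next I would transport this tangent to $X$. Since $S \subset X$, the sets $M_{s_n,\rho_n}(X) \cap B(0,1)$ are closed subsets of the compact ball $B(0,1)$, so by the Blaschke selection theorem I may pass to a subsequence along which they converge in Hausdorff distance to some compact $T$. Because $s_n \in S \subset X$, we have $0 = M_{s_n,\rho_n}(s_n) \in M_{s_n,\rho_n}(X) \cap B(0,1)$ for every $n$, whence $0 \in T$; thus $T$ meets the interior of $B(0,1)$ and $T \in \Tan(X)$. Finally, from the inclusions $M_{s_n,\rho_n}(S) \cap B(0,1) \subset M_{s_n,\rho_n}(X) \cap B(0,1)$ together with the elementary fact that inclusions pass to Hausdorff limits (if $A_n \subset B_n$ with $A_n \to A$ and $B_n \to B$, then $A \subset B$), I conclude $T' \subset T$. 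Combined with $T' \subset V$ this gives $T' \subset T \cap V$, so
\begin{equation*}
  \dimh(T \cap V) \ge \dimh(T') = \dima(S) = \dima(X\cap(V+x)),
\end{equation*}
which is exactly the first inequality.

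The only genuine care is the convergence bookkeeping: one must take the \emph{same} subsequence so that both $M_{s_n,\rho_n}(S)\cap B(0,1)$ and $M_{s_n,\rho_n}(X)\cap B(0,1)$ converge, verify that the limit $T$ is a legitimate weak tangent (nonempty and meeting the interior, which the point $0$ guarantees), and isolate the small lemma that set inclusion survives Hausdorff limits. None of these presents a real obstacle, so the argument is short once the homothety-fixes-the-line observation is in place.
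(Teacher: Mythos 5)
Your proof is correct and takes essentially the same route as the paper: both apply Lemma \ref{lemma:kaenmakiojalarossi} to the slice $X\cap(V+x)$ to obtain a maximizing tangent $T_{\max}$, observe that the homotheties centered at points of $V+x$ carry that line onto $V$ (so $T_{\max}\subset V$), and pass to a convergent subsequence of $M_{x_k,r_k}(X)\cap B(0,1)$ to obtain $T\in\Tan(X)$ with $T_{\max}\subset T\cap V$. Your extra bookkeeping (Blaschke selection, the point $0\in T$ witnessing that $T$ meets the interior of $B(0,1)$, and inclusions surviving Hausdorff limits) merely spells out what the paper compresses into ``by compactness''.
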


\begin{proof}
Let $V\in\RP$ and $x\in X$ and, by recalling Lemma \ref{lemma:kaenmakiojalarossi}, let $T_{\max}\in\Tan(X\cap(V+x))$ be a weak tangent, which satisfies $\dimh(T_{\max})=\dima(X\cap(V+x))$. Let $(x_k)_{k \in \N}$ be a sequence of points in $X\cap(V+x)$ and $(r_k)_{k \in \N}$ be a sequence of positive real numbers converging to zero such that
\begin{equation*}
    M_{x_k,r_k}(X\cap(V+x))\cap B(0,1)\to T_{\max}
\end{equation*}
in Hausdorff distance. Since $x_k\in V+x$ for all $k\in\N$, and each $M_{x_k,r_k}$ is a similarity, we have $M_{x_k,r_k}(V+x)\to V$. Let $T$ be an accumulation point of the sequence $M_{x_k,r_k}(X)\cap B(0,1)$. Then $T\in\Tan(X)$ and, by compactness, $T_{\max}\subset T\cap V$. Therefore, we have
\begin{equation*}
    \dimh(X\cap (V+x))\leq\dima(X\cap (V+x))=\dimh(T_{\max})\leq\dimh(T\cap V)
\end{equation*}
as required.
\end{proof}

We are now ready to prove the main theorem of this section.

\begin{proof}[Proof of Theorem \ref{thm:dimasosc}]
By Lemma \ref{lemma:kaenmakiojalarossi}, we have
\begin{equation}\label{eq:dima-max-tangent}
    \dima(X)=\max_{T\in\Tan(X)}\dimh(T),
\end{equation}
so we may choose $T_{\max}\in\Tan(X)$ such that $\dima(X)=\dimh(T_{\max})$. Recalling Proposition \ref{prop:assouad-upper-bound}, there are $x\in X$ and $V\in X_F\subset \RP\setminus Y_F$ such that
\begin{equation}\label{eq:max-slice}
    \dima(X) = \dimh(T_{\max}) \leq 1+ \dimh(X\cap (V+x)).
\end{equation}
By Lemma \ref{lemma:small-slice}, there exists a tangent set $T$ such that
\begin{equation*}
    \dimh(T\cap V) \geq \dima(X\cap(V+x)) \geq \dimh(X\cap(V+x)).
\end{equation*}
If $\dimh(T\cap V)=0$, then trivially $\dimh(T)\geq 1+\dimh(X\cap (V+x))=1$, by Lemma \ref{lemma:dim-1-slice}. Therefore we may assume that $\dimh(T\cap V)>0$. Notice that $T\cap V\cap \partial B(0,1)$ consists of at most two points, so $\dimh(T^{\circ}\cap V)=\dimh(T\cap V)$. Let $0<s<\dimh(T\cap V)$ and let $\mu$ be a Frostman measure on $T^{\circ}\cap V$; see \cite[Theorem 8.8]{Mattila1995}. Let $\{T_i\}_{i\in I}$ be a tangent decomposition of $T$ given by Lemma \ref{lemma:tangent-decomp}. Since $T=\bigcup_{i\in I}T_i$, at least one of the sets $T_i^{\circ}\cap V$ has positive $\mu$-measure. Let $T_i^{\circ}$ be such a set and let $W_i\in Y_F$ be the line given by Lemma \ref{lemma:dim-1-slice}. Since $V\not\in Y_F$, we have $V\ne W_i$ and so, by the Marstrand's slicing theorem \cite[Theorem 3.3.1]{BishopPeres2017},
\begin{equation*}
    1=\dimh(T\cap (W_i+y))\leq \dimh(T)-s
\end{equation*}
for $\mu$-almost every $y\in T_i^{\circ}\cap V$. In particular, since $\mu(T_i^{\circ}\cap V)>0$, such a point $y$ exists. By letting $s\uparrow \dimh(T\cap V)$, we get
\begin{align*}
    \dima(X) &\geq\dimh(T)\geq 1+\dimh(T\cap V) \\ 
    &\geq 1+\dima(X\cap (V+x)) \geq 1+\dimh(X\cap (V+x)).
\end{align*}
Combining this with (\ref{eq:max-slice}), we get
\begin{equation*}
    \dima(X) = 1+\dima(X\cap (V+x)) = 1+\dimh(X\cap(V+x))
\end{equation*}
as claimed.

It remains to show that $\dima(X\cap(W+y))\leq \dimh(X\cap(V+x))$ for all $y\in X$ and $W\in \RP\setminus Y_F$. By repeating the above proof for such $y$ and $W$, we find a tangent set $T$ such that
\begin{equation*}
    1+\dima(X\cap(W+y))\leq \dimh(T)\leq \dimh(T_{\max})\leq1+ \dimh(X\cap(V+x)),
\end{equation*}
where we used (\ref{eq:dima-max-tangent}) in the middle inequality. This finishes the proof.
\end{proof}

To finish this section, let us verify that Theorem \ref{thm:dimasosc} generalizes B\'ar\'any, K\"aenm\"aki, and Yu \cite[Theorem 3.2]{BaranyKaenmakiYu2021-preprint}. The proof is based on B\'ar\'any, Hochman, and Rapaport \cite[Proposition 6.6]{BaranyHochmanRapaport2019}.

\begin{proposition}\label{cor:sosc-assouad}
  If $X$ is a dominated self-affine set satisfying the WBNC and the SOSC such that $\dimh(X)\geq 1$ and $X_F$ is not a singleton, then
  \begin{equation*}
      \dima(X)=1+\max_{\atop{x\in X}{V\in \RP\setminus Y_F}}\dimh(X\cap (V+x)).
  \end{equation*}
\end{proposition}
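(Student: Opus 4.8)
The plan is to deduce the result directly from Theorem \ref{thm:dimasosc}, whose only additional hypothesis over the assumptions made here is that $\dimh(\proj_{V^\perp}(X)) = 1$ for all $V \in X_F$. Once this projection condition is established, Theorem \ref{thm:dimasosc} supplies the two equalities
\[
\dima(X) = 1 + \max_{\atop{x \in X}{V \in X_F}} \dimh(X \cap (V+x)) = 1 + \max_{\atop{x \in X}{V \in \RP \setminus Y_F}} \dima(X \cap (V+x)),
\]
and it then remains only to replace $\dima$ by $\dimh$ in the second maximum. For this I would use a sandwiching argument. By Lemma \ref{lemma:rossi}\eqref{it:rossi5} together with Lemma \ref{lemma:alternate-furstenberg-dir}, the Furstenberg directions satisfy $Y_F \subset \CC^\circ$ and $X_F \subset \RP \setminus \CC$, so $X_F \subset \RP \setminus Y_F$, and both maxima are over nonempty sets since $X_F$ is not a singleton. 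Enlarging the index set from $X_F$ to $\RP \setminus Y_F$ and using $\dimh \le \dima$ pointwise gives
\[
\max_{\atop{x \in X}{V \in X_F}} \dimh(X \cap (V+x)) \le \max_{\atop{x \in X}{V \in \RP \setminus Y_F}} \dimh(X \cap (V+x)) \le \max_{\atop{x \in X}{V \in \RP \setminus Y_F}} \dima(X \cap (V+x)),
\]
and since the outer two terms both equal $\dima(X) - 1$, all three coincide, which is exactly the claimed identity.

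The heart of the proof is therefore verifying the projection hypothesis $\dimh(\proj_{V^\perp}(X)) = 1$ for every $V \in X_F$. As $\proj_{V^\perp}(X)$ is a subset of a line, its dimension is at most $1$, so only the lower bound requires work. Here I would invoke B\'ar\'any, Hochman, and Rapaport \cite[Proposition 6.6]{BaranyHochmanRapaport2019}. Taking a self-affine (K\"aenm\"aki) measure $\mu$ on $X$ of maximal dimension $\dim\mu = \dimh(X) \ge 1$, that proposition yields $\dimh(\proj_{V^\perp}\mu) = \min\{1,\dim\mu\} = 1$ for the projection directions in question, whence $\dimh(\proj_{V^\perp}(X)) \ge \dimh(\proj_{V^\perp}\mu) = 1$. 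The role of the hypothesis that $X_F$ is not a singleton is to provide the non-degeneracy needed to apply \cite[Proposition 6.6]{BaranyHochmanRapaport2019}: since $X_F = \overline{\vartheta}_2(\Sigma)$ records the backward Furstenberg directions, its being non-trivial rules out the degenerate reducible case in which all matrices share a single backward-dominant eigendirection.

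The main obstacle I anticipate is matching the precise non-degeneracy hypotheses of \cite[Proposition 6.6]{BaranyHochmanRapaport2019} with the assumption that $X_F$ is not a singleton, and checking that the directions $V \in X_F$ avoid any exceptional set arising in that projection theorem. The key geometric input is the separation $X_F \cap Y_F = \emptyset$ recorded above: the exceptional directions are governed by the forward Furstenberg structure concentrated on $Y_F \subset \CC^\circ$, whereas the projection directions under consideration come from $X_F \subset \RP \setminus \CC$ and thus lie strictly away from the exceptional set. Care is also needed to confirm, within the BHR framework and under the present assumptions, the existence of the self-affine measure $\mu$ attaining $\dim\mu = \dimh(X)$, since only $X_F$ being non-singleton rather than full strong irreducibility is assumed here.
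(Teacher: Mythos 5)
Your overall architecture coincides with the paper's: reduce to Theorem \ref{thm:dimasosc} by verifying the projection hypothesis $\dimh(\proj_{V^{\perp}}(X))=1$ for all $V\in X_F$, and your sandwiching argument via $X_F\subset\RP\setminus Y_F$ (which correctly makes explicit how the $\dima$ in the second maximum of Theorem \ref{thm:dimasosc} gets replaced by $\dimh$) is sound. The gap is that the heart of your proof --- the projection hypothesis itself --- is never actually established. You propose to invoke \cite[Proposition 6.6]{BaranyHochmanRapaport2019} directly, but the obstacles you flag at the end are genuine and unresolved: that proposition carries irreducibility-type hypotheses which are \emph{not} implied by ``$X_F$ is not a singleton'', and the reducible case lies squarely inside the hypotheses of the present proposition. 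Concretely, when the matrices are simultaneously upper triangular but not diagonalizable there is a common invariant direction, the system is reducible, and yet $X_F$ is a nontrivial set --- this is exactly the situation of the Takagi function, the paper's motivating example (Lemma \ref{lemma:furstenberg-is-interval}: $Y_F=\{V_\infty\}$ is the invariant direction and $X_F$ is a nondegenerate interval). Your assertion that the exceptional directions are ``governed by the forward Furstenberg structure concentrated on $Y_F$'' is precisely the statement that requires proof; it does not come for free from BHR. Moreover, the existence of a self-affine (K\"aenm\"aki) measure with $\dim\mu$ \emph{exactly} equal to $\dimh(X)\geq 1$ is not available at this level of generality: it is tied to $\dimh(X)=\dimaff(X)$, which in \cite[Theorem 1.1]{BaranyHochmanRapaport2019} requires strong irreducibility, exactly what fails in the triangular case; at best one can approximate, and your proposal does not address this.

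The paper closes the gap by a different and shorter route. By \cite[Theorem 2.18]{BaranyKaenmakiYu2021-preprint}, the SOSC together with $\dimh(X)\geq 1$ already yields $\dimh(\proj_{V^{\perp}}(X))=1$ for every $V\in\RP\setminus\mathcal{I}$, where $\mathcal{I}=\{W\in\RP\colon A_iW=W \text{ for all } i\}$ contains at most one element. If $\mathcal{I}=\emptyset$ the hypothesis of Theorem \ref{thm:dimasosc} holds outright; if $\mathcal{I}\neq\emptyset$ then, since $X_F$ is not a singleton, \cite[Lemma 2.11]{BaranyKaenmakiYu2021-preprint} forces the triangular non-diagonalizable normal form, in which case $\mathcal{I}=Y_F$, and $X_F\subset\RP\setminus Y_F=\RP\setminus\mathcal{I}$ gives the projection hypothesis on all of $X_F$. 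Note that the paper remarks that BKY's Theorem 2.18 is itself based on \cite[Proposition 6.6]{BaranyHochmanRapaport2019}, so your instinct identifies the correct underlying tool; but the case analysis that adapts it to the reducible setting --- the identification of the single possible exceptional direction with $Y_F$ via the classification lemma --- is the actual mathematical content of the proof, and it is missing from your proposal.
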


\begin{proof}
Since $X$ satisfies the SOSC, \cite[Theorem 2.18]{BaranyKaenmakiYu2021-preprint} shows that
\begin{equation*}
  \dimh(\proj_{V^\bot}(X)) = 1
\end{equation*}
for all $V \in \RP \setminus \mathcal{I}$, where $\mathcal{I} = \{W \in \RP \colon W = A_iW \text{ for all } i \in \{1,\ldots,N\} \}$ and contains at most one element. If $\mathcal{I}=\emptyset$, then the claim follows from Theorem \ref{thm:dimasosc}. Bárány, Käenmäki and Yu \cite[Lemma 2.11]{BaranyKaenmakiYu2021-preprint} show that if $X_F$ is not a singleton, then $\mathcal{I}$ is non empty if and only if the matrices $A_i$ are of the form
\begin{equation*}
    A_i=\begin{pmatrix}
      a_i&b_i\\
      0&d_i
    \end{pmatrix},
\end{equation*}
possibly after a change of basis, where $0<|d_i|<|a_i|<1$, and the matrices are not simultaneously diagonalizable, and clearly in this case, $\mathcal{I}=Y_F$. Since $X_F\subset \RP\setminus Y_F$, Theorem \ref{thm:dimasosc} gives the claim.
\end{proof}

\section{Assouad dimension of the Takagi function} \label{sec:proof-main1}

\begin{figure}
    \centering
    \includegraphics[width=0.8\linewidth]{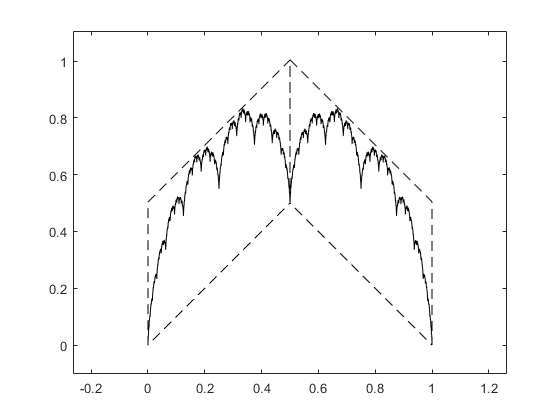}
    \caption{The graph of the Takagi function for $\lambda=\frac{2}{3}$.}
    \label{fig:takagi}
\end{figure}

As an application of Theorem \ref{thm:dimasosc}, which connects the Hausdorff dimension of the slices to the Assouad dimension of the set, we are now able to study slices of the Takagi function. The following result, which is the first part of Theorem \ref{thm:main1}, follows immediately from Theorem \ref{thm:dimasosc} after verifying the assumptions of the theorem.

\begin{theorem}\label{prop:takagi-assouad-formula}
  If $T_\lambda$ is the Takagi function, then
  \begin{align*}
    \dima(\Xl)&=1+\max_{\atop{x\in T_\lambda}{V \in X_F}}\dimh(\Xl\cap (V+x))\\
    &=1+\max_{\atop{x\in T_\lambda}{V\in\RP}}\dima(\Xl\cap (V+x)).
  \end{align*}
\end{theorem}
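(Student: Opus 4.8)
The plan is to verify the hypotheses of Theorem~\ref{thm:dimasosc} for $\Xl$ and then reconcile the index sets appearing in the two displayed equalities. Recall that $\Xl$ is the self-affine set generated by the two affine maps arising from the functional equation $T_\lambda(x) = \dist(x,\Z) + \lambda T_\lambda(2x)$; their linear parts are the lower-triangular matrices
\[
  A_1 = \begin{pmatrix} \tfrac12 & 0 \\ \tfrac12 & \lambda \end{pmatrix}, \qquad
  A_2 = \begin{pmatrix} \tfrac12 & 0 \\ -\tfrac12 & \lambda \end{pmatrix},
\]
which share the eigenvector $(0,1)$ with eigenvalue $\lambda$, the other eigenvalue being $\tfrac12 < \lambda$. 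First I would record domination: writing $A_{\iii}$ for $\iii\in\Sigma_n$ as a lower-triangular matrix with diagonal $(2^{-n},\lambda^n)$, a direct estimate of the off-diagonal entry shows that $\alpha_1(A_{\iii})$ is comparable to $\lambda^n$ and $\alpha_2(A_{\iii})$ to $2^{-n}$, so $\alpha_2(A_{\iii})/\alpha_1(A_{\iii})$ decays like $(2\lambda)^{-n}$ with $2\lambda>1$. The same computation shows $\vartheta_1(\iii|_n)\to\projspan{(0,1)}$ for every $\iii$, so that $Y_F=\overline{\vartheta}_1(\Sigma)=\{\projspan{(0,1)}\}$ is the vertical direction by Lemma~\ref{lemma:alternate-furstenberg-dir}.

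The hard part will be the weak bounded neighborhood condition, and this is where the dyadic structure of $\Xl$ enters (note that the SSC fails, since $\varphi_1(\Xl)$ and $\varphi_2(\Xl)$ meet at the point lying over $x=\tfrac12$, so the WBNC is exactly the separation hypothesis one needs). The key observation is that, because the first row of every $A_{\iii}$ equals $(2^{-n},0)$, the cylinder $\varphi_{\iii}(\Xl)$ projects onto the $x$-axis exactly to the dyadic interval of length $2^{-n}$ coded by $\iii$; these projections are pairwise disjoint and tile $[0,1]$. Since $\alpha_2(A_{\iii})$ is comparable to $2^{-n}$ with uniform constants, every word occurring in $\Phi(x,r)$ has length within a bounded additive range of $\log_2(1/r)$. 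Hence $\#\Phi(x,r)$ is at most the number of dyadic intervals of length comparable to $r$ meeting the horizontal window of length $2r$ centred at $x$, summed over boundedly many scales, which is bounded independently of $x$ and $r$. This establishes the WBNC.

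The remaining hypotheses are immediate. By \eqref{eq:takagi-hausdorff-dim} and $\tfrac12<\lambda<1$ we have $\dimh(\Xl)=2-\log\lambda/\log\tfrac12\in(1,2)$, so $\dimh(\Xl)\ge 1$. For the projection condition, $\proj_{V^{\perp}}(\Xl)$ is the continuous image of the connected set $\Xl$ and hence a connected subset of the line $V^{\perp}$, that is, an interval; it is nondegenerate because the nowhere-differentiable graph $\Xl$ is contained in no line, so $\dimh(\proj_{V^{\perp}}(\Xl))=1$ for every $V\in\RP$, in particular for every $V\in X_F$. Theorem~\ref{thm:dimasosc} then yields both equalities, with the second maximum taken over $V\in\RP\setminus Y_F$. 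Finally I would enlarge this maximum to all of $\RP$: the only direction removed is the vertical one $\projspan{(0,1)}\in Y_F$, and a vertical line meets the graph $\Xl$ in a single point, so $\dima(\Xl\cap(V+x))=0$ there; since $\dima(\Xl)\ge\dimh(\Xl)>1$, this contribution does not exceed $\dima(\Xl)-1=\max_{V\in\RP\setminus Y_F}\dima(\Xl\cap(V+x))$, and the two maxima coincide, giving the stated formula.
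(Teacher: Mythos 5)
Your proposal is correct and follows essentially the same route as the paper: verify domination, the (W)BNC via the dyadic tiling of the $x$-axis by cylinder projections, $\dimh(\Xl)\ge 1$ from \eqref{eq:takagi-hausdorff-dim}, and full-dimensional projections from connectedness plus non-collinearity, then apply Theorem~\ref{thm:dimasosc} and dispose of the vertical direction $Y_F=\{V_\infty\}$ by noting that vertical slices of a graph are singletons. The only (harmless) deviations are that you bypass the paper's explicit identification of $X_F$ as the slope interval $[-K_\lambda,K_\lambda]$ (Lemma~\ref{lemma:furstenberg-is-interval}) by checking the projection hypothesis for all $V\in\RP$, and that adjacent closed dyadic intervals share endpoints rather than being pairwise disjoint, which does not affect your counting bound.
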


The second result of this section, which implies the second part of Theorem \ref{thm:main1}, gives an explicit upper bound for the Assouad dimension of the Takagi function. In particular, it follows that the Assouad dimension is always strictly smaller than $2$.

\begin{theorem}\label{thm:takagi-assouad}
  If $T_\lambda$ is the Takagi function, then
  \begin{equation*}
      \max_{\atop{x\in T_\lambda}{V\in\RP}}\dimh(\Xl\cap (V+x)) \leq \frac{\log (2^{n_{\lambda}}-1)}{\log 2^{n_{\lambda}}}<1,
  \end{equation*}
  where
  \begin{equation*}
      n_{\lambda}=\left\lceil\frac{\log2(K_{\lambda}+M_{\lambda})}{-\log \lambda}\right\rceil \ge 2,
  \end{equation*}
  $K_{\lambda}=\sum_{k\in\N}2^{-k}\lambda^{-k}=(2\lambda-1)^{-1}$, and $M_\lambda = \max_{x \in [0,1]}T_\lambda(x)=(3(1-\lambda))^{-1}$.
\end{theorem}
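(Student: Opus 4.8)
The plan is to combine the reduction furnished by Theorem \ref{prop:takagi-assouad-formula} with a direct covering estimate of the slices, in the spirit of the computation in Example \ref{ex:counterexample}. Recall first that $T_\lambda$ is the self-affine set of the IFS whose linear parts are the lower-triangular matrices
\[
  A_1 = \begin{pmatrix} \tfrac12 & 0 \\ \tfrac12 & \lambda \end{pmatrix}, \qquad A_2 = \begin{pmatrix} \tfrac12 & 0 \\ -\tfrac12 & \lambda \end{pmatrix},
\]
so that for $\iii\in\Sigma_n$ the product $A_\iii$ is again lower-triangular with diagonal $(2^{-n},\lambda^n)$ and with $(2,1)$-entry $c_\iii$ satisfying $|c_\iii|\le 2^{-n}\sum_{k=1}^n(2\lambda)^{k-1}\le\lambda^n K_\lambda$. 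Since $\max_{x,V\in\RP}\dimh(T_\lambda\cap(V+x)) \le \max_{x,V\in\RP}\dima(T_\lambda\cap(V+x)) = \max_{x,V\in X_F}\dimh(T_\lambda\cap(V+x))$ by Theorem \ref{prop:takagi-assouad-formula}, it suffices to bound the dimension of the slices $T_\lambda\cap(V+x)$ with $V\in X_F$. Examining the matrices $A_i^{-1}$, which expand horizontally by $2$ and vertically only by $\lambda^{-1}<2$, one sees via Lemma \ref{lemma:alternate-furstenberg-dir} that every $V\in X_F$ is the image of a nearly horizontal direction and therefore has slope at most $K_\lambda=\sum_{k\ge1}2^{-k}\lambda^{-k}$ in absolute value. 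I would fix such a $V$, with slope $s$ and $|s|\le K_\lambda$, fix $x\in T_\lambda$, and write $\ell=V+x$.

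For the covering I would cover $T_\lambda\cap\ell$ at level $n$ by the pieces $\varphi_\jjj(T_\lambda)\cap\ell$, $\jjj\in\Sigma_n$. As in Example \ref{ex:counterexample}, $\diam(\varphi_\jjj(T_\lambda)\cap\ell)\le \|A_\jjj^{-1}|_V\|^{-1}\diam(T_\lambda)$; a short computation with the explicit $A_\jjj^{-1}$ shows that $\|A_\jjj^{-1}(1,s)\|$ is comparable to $2^n$ (the horizontal expansion dominates because the renormalised direction $A_\jjj^{-1}V$ again has slope $\lesssim K_\lambda$), so that $\diam(\varphi_\jjj(T_\lambda)\cap\ell)\le C_\lambda 2^{-n}$ for a constant $C_\lambda$ depending only on $\lambda$.

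The heart of the argument is a counting estimate: \emph{for every cell, at most $2^{n_\lambda}-1$ of its $2^{n_\lambda}$ descendants $n_\lambda$ levels deeper meet $\ell$}. To prove it I would renormalise a given cell so that it becomes the standard copy of $T_\lambda$ and $\ell$ becomes a line $y=\sigma t+c$; since the contribution of the original slope to $\sigma$ decays like $(2\lambda)^{-n}$, one gets $|\sigma|\le 2K_\lambda$ uniformly. A descendant over a dyadic interval $I_\jjj$, $\jjj\in\Sigma_{n_\lambda}$, meets $\ell$ exactly when $F(t)=T_\lambda(t)-\sigma t$ attains the value $c$ on $I_\jjj$. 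The oscillation of $F$ on a single such interval is at most the secant rise $|c_\jjj|+|\sigma|2^{-n_\lambda}$ plus the vertical fluctuation $\lambda^{n_\lambda}M_\lambda$, hence is bounded by a constant multiple of $\lambda^{n_\lambda}(K_\lambda+M_\lambda)$; on the other hand the range of $F$ over $[0,1]$ is bounded below by a fixed fraction of $M_\lambda$ uniformly in $\sigma$ (using $F(0)=0$, $F(1)=-\sigma$ and the value of $F$ at the peak of $T_\lambda$). Consequently, once $n_\lambda$ is so large that twice the one-cell oscillation drops below the range — which is exactly what $2\lambda^{n_\lambda}(K_\lambda+M_\lambda)\le1$ encodes — the descendant carrying the global maximum of $F$ and the one carrying its global minimum cannot both meet $\ell$, so at least one descendant is missed. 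I expect the main obstacle to be making this pigeonhole quantitatively sharp enough to produce the stated constant $2(K_\lambda+M_\lambda)$: one must estimate the one-cell oscillation and the global range of $F$ with the correct constants rather than with the crude bounds indicated here, and verify the range lower bound uniformly over all admissible slopes.

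Finally, iterating the counting estimate over the $m$ blocks of length $n_\lambda$ shows that at most $(2^{n_\lambda}-1)^m$ cells of level $mn_\lambda$ meet $\ell$, each of diameter at most $C_\lambda 2^{-mn_\lambda}$. Therefore, for every $s'>\frac{\log(2^{n_\lambda}-1)}{\log 2^{n_\lambda}}$,
\[
  \HH^{s'}(T_\lambda\cap\ell) \le \liminf_{m\to\infty} (2^{n_\lambda}-1)^m\,(C_\lambda 2^{-mn_\lambda})^{s'} = C_\lambda^{s'}\liminf_{m\to\infty}\bigl((2^{n_\lambda}-1)2^{-n_\lambda s'}\bigr)^m = 0,
\]
so $\dimh(T_\lambda\cap\ell)\le \frac{\log(2^{n_\lambda}-1)}{\log 2^{n_\lambda}}$. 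Taking the supremum over $x$ and $V\in X_F$ and recalling the reduction from the first paragraph yields the theorem; the bound is strictly below $1$ because $2^{n_\lambda}-1<2^{n_\lambda}$, and $n_\lambda\ge2$ follows from $2(K_\lambda+M_\lambda)>\lambda^{-1}$, which holds since $K_\lambda\ge1$, $M_\lambda\ge\tfrac23$ and $\lambda>\tfrac12$.
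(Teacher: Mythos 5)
Your skeleton is the same as the paper's: a one-block counting lemma at scale $n_\lambda$ (every cylinder has at most $2^{n_\lambda}-1$ of its $2^{n_\lambda}$ descendants meeting the line), propagated by pulling the line back through $\varphi_\iii^{-1}$ with uniformly bounded slopes (your slope recursion is exactly Lemma \ref{lemma:pull-back-cylinder}), iterated to give the count $(2^{n_\lambda}-1)^m$ at level $mn_\lambda$, and converted to the dimension bound by the same covering estimate. (Your opening reduction via Theorem \ref{prop:takagi-assouad-formula} is valid and non-circular, but heavier than needed: since $T_\lambda$ is a graph, slices in direction $V_\infty$ are singletons, and the paper's one-block lemma works for every finite slope, so no restriction to $X_F$ is required.) The genuine gap is the one you flag yourself: the max--min pigeonhole on $F(t)=T_\lambda(t)-\sigma t$ does not deliver the stated $n_\lambda$. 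The pigeonhole needs $\mathrm{range}(F)>2\,\mathrm{osc}$, where $\mathrm{osc}$ is the oscillation of $F$ over a single level-$n_\lambda$ dyadic cell; your range bound $\mathrm{range}(F)\geq M_\lambda$ is correct, but your cell-oscillation bound is $|c_\jjj|+|\sigma|2^{-n_\lambda}+\lambda^{n_\lambda}M_\lambda\leq\lambda^{n_\lambda}(3K_\lambda+M_\lambda)$, so you need $2\lambda^{n_\lambda}(3K_\lambda+M_\lambda)<M_\lambda$. The stated $n_\lambda$ guarantees only $\lambda^{n_\lambda}(K_\lambda+M_\lambda)\leq\tfrac12$, and, being a ceiling, it also forces $\lambda^{n_\lambda}(K_\lambda+M_\lambda)>\tfrac{\lambda}{2}$; hence $2\lambda^{n_\lambda}(3K_\lambda+M_\lambda)>\lambda(3K_\lambda+M_\lambda)/(K_\lambda+M_\lambda)$, which tends to $\tfrac32$ as $\lambda\downarrow\tfrac12$ (since $K_\lambda\to\infty$), while $M_\lambda\downarrow\tfrac23$. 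So near $\lambda=\tfrac12$ the inequality you need fails by a definite factor, not by slack in bookkeeping: your argument proves the counting lemma only at some strictly larger scale $n_\lambda'$, i.e.\ the weaker bound $\log(2^{n_\lambda'}-1)/\log 2^{n_\lambda'}$ --- enough for the qualitative conclusion $\dima(T_\lambda)<2$ in Theorem \ref{thm:main1}, but not Theorem \ref{thm:takagi-assouad} as stated.

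The paper closes exactly this hole with a different one-block mechanism that exploits \emph{where} the missed cylinder sits instead of comparing the extremal cells of $F$. Normalising the line as $V_t+(0,y)$ with $t\geq 0$, if it meets the leftmost cylinder $\varphi_{\overline{1}|_{n_\lambda}}(T_\lambda)\subset[0,2^{-n_\lambda}]\times[0,(K_\lambda+M_\lambda)\lambda^{n_\lambda}]$, then $y\leq(K_\lambda+M_\lambda)\lambda^{n_\lambda}\leq\tfrac12$. Then either $\tfrac{t}{2}+y<\tfrac12$, in which case the line misses the cylinder $\varphi_{1\overline{2}|_{n_\lambda}}(T_\lambda)$, which ends at $(\tfrac12,\tfrac12)$ and lies entirely at height at least $\tfrac12$; or $t\geq 1-2(K_\lambda+M_\lambda)\lambda^{n_\lambda}>0$, and then $t(1-2^{-n_\lambda})+y\geq\tfrac{t}{2}+y\geq\tfrac12\geq(K_\lambda+M_\lambda)\lambda^{n_\lambda}$, so the line overshoots the corner box containing the rightmost cylinder $\varphi_{\overline{2}|_{n_\lambda}}(T_\lambda)$. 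Thus one of three explicit cylinders is always missed, and the constant $2(K_\lambda+M_\lambda)$ in the definition of $n_\lambda$ is precisely what makes the corner boxes no taller than the midpoint height $\tfrac12$. If you replace your pigeonhole by this corner-versus-midpoint trichotomy, your renormalisation, iteration, and covering steps go through unchanged, and you can drop the $X_F$ reduction since the trichotomy works for every finite slope.
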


The prerequisite in the proof of the above theorems is to express the Takagi function as a self-affine set. Let $T_{\lambda} \colon [0,1] \to \R$ be the Takagi function for the parameter $\tfrac12 < \lambda < 1$ as defined in \eqref{eq:takagi-def}. Let $\A = (A_1,A_2) \in GL_2(\R)^2$, where
\begin{equation*}
  A_1 =
  \begin{pmatrix}
    \tfrac12 & 0 \\ 
    \tfrac12 & \lambda
  \end{pmatrix}
  \qquad\text{and}\qquad
  A_2 =
  \begin{pmatrix}
    \tfrac12 & 0 \\ 
    -\tfrac12 & \lambda
  \end{pmatrix},
\end{equation*}
and observe that, as $\tfrac12 < \lambda$, both matrices have two real eigenvalues with different absolute values. Furthermore,  the contraction by $\lambda$ is realized precisely on the $y$-axis which is invariant under both matrices. We define affine maps $\fii_1,\fii_2 \colon \R^2 \to \R^2$ by setting
\begin{equation*}
  \fii_1(x) = A_1(x)
  \qquad\text{and}\qquad
  \fii_2(x) = A_2(x)+(\tfrac12,\tfrac12),
\end{equation*}
for all $x \in \R^2$. A straightforward calculation shows that
\begin{equation*}
    T_\lambda(\tfrac{x}{2})=\tfrac{x}{2}+\lambda T_\lambda(x),\text{ and }T_\lambda(\tfrac{x}{2}+\tfrac{1}{2})=\tfrac{1}{2}-\tfrac{x}{2}+\lambda T_\lambda(x).
\end{equation*}
 It follows that $\varphi_1(x,T_\lambda(x))=(\tfrac{x}{2},T_\lambda(\tfrac{x}{2}))$ and $\varphi_2(x,T_\lambda(x))=(\tfrac{x}{2}+\tfrac{1}{2},T_\lambda(\tfrac{x}{2}+\tfrac{1}{2}))$, so $\Xl \subset \R^2$ is the self-affine set associated to the affine IFS $(\fii_1,\fii_2)$. Observe that, by induction, we have
\begin{equation}\label{eq:takagi-matrix}
\begin{split}
    A_{\iii} &=
    \begin{pmatrix}
      2^{-|\iii|} & 0 \\
      \sum_{k=1}^{|\iii|}(-1)^{i_{|\iii|-k+1}+1}2^{-k}\lambda^{|\iii|-k} & \lambda^{|\iii|}
    \end{pmatrix}, \\
    A^{-1}_{\overleftarrow{\iii}} &=
    \begin{pmatrix}
      2^{|\iii|} & 0 \\
      \sum_{k=1}^{|\iii|}(-1)^{i_k}2^{|\iii|-k}\lambda^{-k} & \lambda^{-|\iii|}
    \end{pmatrix}.
\end{split}
\end{equation}
for all $\iii\in\Sigma_*$. We begin verifying the assumptions of Theorem \ref{thm:dimasosc} by showing that $T_\lambda$ is dominated.

\begin{lemma}\label{lemma:takagi-dominated}
  There exists $C>1$ such that
  \begin{equation*}
      \lambda^{|\iii|}\leq\alpha_1(A_{\iii})\leq C\lambda^{|\iii|}
      \qquad\text{and}\qquad
      C^{-1}2^{-|\iii|}\leq\alpha_2(A_{\iii})\leq 2^{-|\iii|}
  \end{equation*}
  for all $\iii\in\Sigma_*$. In particular, $\Xl$ is dominated.
\end{lemma}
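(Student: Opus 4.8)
The plan is to exploit the fact that, by \eqref{eq:takagi-matrix}, each $A_{\iii}$ is lower triangular with diagonal entries $2^{-|\iii|}$ and $\lambda^{|\iii|}$, so that $\det(A_{\iii}) = 2^{-|\iii|}\lambda^{|\iii|}$ and hence $\alpha_1(A_{\iii})\alpha_2(A_{\iii}) = |\det(A_{\iii})| = 2^{-|\iii|}\lambda^{|\iii|}$. Because of this product identity, it suffices to pin down $\alpha_1(A_{\iii}) = \|A_{\iii}\|$ between $\lambda^{|\iii|}$ and $C\lambda^{|\iii|}$; the two bounds for $\alpha_2(A_{\iii})$ then follow immediately upon dividing the identity by the corresponding bound for $\alpha_1(A_{\iii})$. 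Throughout I write $n = |\iii|$ and use that $\tfrac12 < \lambda$ forces $2^{-n} < \lambda^n$, so that $\lambda^n$ is the dominant diagonal entry.

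For the lower bound $\alpha_1(A_{\iii}) \ge \lambda^n$, I would simply note that the operator norm of a matrix is at least the absolute value of any of its entries, and $\lambda^n$ appears as the lower-right entry of $A_{\iii}$. For the upper bound I would control the single off-diagonal entry $c_{\iii} = \sum_{k=1}^{n}(-1)^{i_{n-k+1}+1}2^{-k}\lambda^{n-k}$ by the geometric estimate
\begin{equation*}
  |c_{\iii}| \le \sum_{k=1}^{n} 2^{-k}\lambda^{n-k} = \lambda^n\sum_{k=1}^{n}(2\lambda)^{-k} \le \lambda^n\sum_{k=1}^{\infty}(2\lambda)^{-k} = K_\lambda\lambda^n,
\end{equation*}
where the series converges precisely because $2\lambda > 1$ and sums to $K_\lambda = (2\lambda-1)^{-1}$. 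Bounding the operator norm by the Frobenius norm and using $2^{-2n}\le\lambda^{2n}$ would then give
\begin{equation*}
  \alpha_1(A_{\iii}) \le \sqrt{2^{-2n} + c_{\iii}^2 + \lambda^{2n}} \le \sqrt{2 + K_\lambda^2}\,\lambda^n,
\end{equation*}
so that $C = \sqrt{2 + K_\lambda^2} > 1$ works for both the upper bound on $\alpha_1$ and, as explained, the lower bound on $\alpha_2$.

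With $\lambda^n \le \alpha_1(A_{\iii}) \le C\lambda^n$ in hand, dividing $\alpha_1(A_{\iii})\alpha_2(A_{\iii}) = 2^{-n}\lambda^n$ through yields $C^{-1}2^{-n} \le \alpha_2(A_{\iii}) \le 2^{-n}$ at once, completing the two displayed chains. Finally, domination follows immediately from these estimates, since
\begin{equation*}
  \frac{\alpha_2(A_{\iii})}{\alpha_1(A_{\iii})} \le \frac{2^{-n}}{\lambda^n} = (2\lambda)^{-n},
\end{equation*}
so \eqref{eq:domination-def} holds with $\tau = (2\lambda)^{-1} \in (0,1)$ and constant $1$. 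There is no genuine obstacle here; the only points requiring care are the triangularity (which makes $\|A_{\iii}\|$ transparent and the determinant explicit) and the standing hypothesis $\lambda > \tfrac12$, which is exactly what guarantees $2\lambda > 1$ and hence both the convergence of the geometric series bounding $c_{\iii}$ and the exponential gap between the two singular values.
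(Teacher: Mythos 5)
Your proof is correct, and it shares its key estimate with the paper --- both arguments hinge on bounding the single off-diagonal entry of $A_{\iii}$ by the geometric series $\sum_{k=1}^{n}2^{-k}\lambda^{n-k}\le K_\lambda\lambda^n$, which is exactly where the hypothesis $\lambda>\tfrac12$ enters. The structural difference lies in how the $\alpha_2$ bounds are obtained. The paper proves all four inequalities more or less independently: the lower bound on $\alpha_1$ and the upper bound on $\alpha_2$ come from the observation that $\lambda^{|\iii|}$ and $2^{|\iii|}$ are eigenvalues of $A_{\iii}$ and $A_{\iii}^{-1}$ respectively (so the spectral radius bounds the norm from below), the upper bound on $\alpha_1$ comes from an explicit expansion of $\|A_{\iii}y\|^2$ for unit $y$ yielding $C=\sqrt{(K_\lambda+1)^2+1}$, and the lower bound on $\alpha_2$ is only remarked to ``follow similarly'' by the analogous computation for $A_{\iii}^{-1}$. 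You instead pin down $\alpha_1$ alone (lower bound from an entry of the matrix, upper bound via the Frobenius norm, giving the slightly sharper constant $\sqrt{2+K_\lambda^2}$) and then read off both $\alpha_2$ bounds at once from the identity $\alpha_1(A_{\iii})\alpha_2(A_{\iii})=|\det(A_{\iii})|=2^{-|\iii|}\lambda^{|\iii|}$, which the triangular form makes explicit. This determinant route is a genuine economy: it eliminates the second norm computation that the paper only sketches, and it makes transparent that the two displayed chains are equivalent given either one of them. Your verification of domination, with $\alpha_2(A_{\iii})/\alpha_1(A_{\iii})\le(2\lambda)^{-|\iii|}$ and $\tau=(2\lambda)^{-1}<1$, coincides with the paper's concluding step.
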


\begin{proof}
Let $\iii\in\Sigma_*$ and recall that $\alpha_1(A_{\iii})=\|A_{\iii}\|$. The lower bound for $\alpha_1(A_{\iii})$ follows from the fact that $\lambda^{|\iii|}$ is an eigenvalue of $A_{\iii}$. Similarly, since $\alpha_2(A_{\iii})=\|A_{\iii}^{-1}\|^{-1}$, the upper bound for $\alpha_2(A_{\iii})$ is trivial as $2^{|\iii|}$ is an eigenvalue of $A_{\iii}^{-1}$. We prove the upper bound for $\alpha_1(A_{\iii})$ and only remark that the proof of the lower bound for $\alpha_2(A_{\iii})$ follows similarly. Let $s(\iii)=\sum_{k=1}^{|\iii|}(-1)^{i_{|\iii|-k}+1}2^{-k}\lambda^{|\iii|-k}$ and notice that
\begin{equation*}
    |s(\iii)|\leq \sum_{k=1}^{|\iii|}2^{-k}\lambda^{|\iii|-k}\leq\lambda^{|\iii|}\sum_{k=1}^{\infty}2^{-k}\lambda^{-k}=K_{\lambda}\lambda^{|\iii|}
\end{equation*}
for all $\iii = i_1 \cdots i_{|\iii|} \in \Sigma_*$. Writing $y=(y_1,y_2)\in S^1$, we see that
\begin{align*}
    \|A_{\iii}y\|^2&=|(2^{-|\iii|}y_1)^2+(s(\iii)y_1+\lambda^{|\iii|}y_2)^2| \\ 
    &= |2^{-2|\iii|}y_1^2+s(\iii)^2y_1^2+2s(\iii)\lambda^{|\iii|}y_1y_2+\lambda^{2|\iii|}y_2^2|\\
    &\leq(2^{-2|\iii|}+|s(\iii)|^2)y_1^2+2|s(\iii)|\lambda^{|\iii|}|y_1y_2|+\lambda^{2|\iii|}y_2^2 \\
    &\leq\lambda^{2|\iii|}+K_{\lambda}^2\lambda^{2|\iii|}+2K_{\lambda}\lambda^{2|\iii|}+\lambda^{2|\iii|}\\
    &=(K_{\lambda}^2+2K_{\lambda}+2)\lambda^{2|\iii|},
\end{align*}
so the claim holds with $C=\sqrt{(K_{\lambda}+1)^2+1}>1$. Finally, since
\begin{equation*}
    \alpha_2(A_{\iii})\leq \Bigl(\frac{1}{2\lambda}\Bigr)^{|\iii|}\alpha_1(A_{\iii}),
\end{equation*}
we see that $\Xl$ is dominated.
\end{proof}

Let us next determine the Furstenberg directions of the Takagi function. For a given $t \in \R$, let $V_t=\projspan{(1,t)} \in \RP$ be the line with slope $t$ passing through the origin, and let $V_{\infty}=\projspan{(0,1)} \in \RP$ be the $y$-axis. Recall also the definition of $K_\lambda$ from the formulation of Theorem \ref{thm:takagi-assouad}.

\begin{lemma}\label{lemma:furstenberg-is-interval}
  If $T_\lambda$ is the Takagi function, then
  \begin{equation*}
    X_F = \{V_t \in \RP \colon t \in [-K_\lambda,K_\lambda]\}
  \end{equation*}
  is a closed projective interval and $Y_F = \{V_{\infty}\}$ is a singleton.
\end{lemma}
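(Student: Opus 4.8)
The plan is to compute the forward and backward Furstenberg directions directly from the explicit formulas for $A_\iii$ and $A_{\overleftarrow{\iii}}^{-1}$ recorded in \eqref{eq:takagi-matrix}, using Lemma \ref{lemma:alternate-furstenberg-dir} which identifies $Y_F = \overline{\vartheta}_1(\Sigma)$ and $X_F = \overline{\vartheta}_2(\Sigma)$. First I would determine $Y_F$, which should be the easier of the two. Observe that the $y$-axis $V_\infty = \projspan{(0,1)}$ is invariant under both $A_1$ and $A_2$, since the first column of each $A_i$ produces a vector whose direction rotates away from vertical only through the lower-left entry, while vertical vectors $(0,y_2)$ map to $(0,\lambda^{|\iii|}y_2)$, staying vertical. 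Because the contraction rate on the $y$-axis is $\lambda$ and the expansion in the horizontal direction is governed by $2^{-|\iii|}$ with $\lambda > \tfrac12$, the $y$-axis is precisely the direction of strongest contraction, i.e.\ $\vartheta_1(\iii) \to V_\infty$ for every $\iii$. Concretely, from \eqref{eq:takagi-matrix} the image $A_\iii(1,0) = (2^{-|\iii|}, s(\iii))$ where $|s(\iii)| \le K_\lambda \lambda^{|\iii|}$, so the slope $s(\iii)/2^{-|\iii|}$ blows up like $(2\lambda)^{|\iii|}$, confirming $\vartheta_1(\iii) \to V_\infty$. Thus $Y_F = \{V_\infty\}$ is a singleton.

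Next I would compute $X_F$ from the second matrix in \eqref{eq:takagi-matrix}. The direction $\vartheta_2(\iii)$ is the orientation of the principal semiaxis of $A_{\overleftarrow{\iii}}^{-1}(B(0,1))$, which for a lower-triangular matrix with dominant singular value $2^{|\iii|}$ (the $(1,1)$ entry) is governed by the ratio of the $(2,1)$ entry to the $(1,1)$ entry. From \eqref{eq:takagi-matrix}, applying $A_{\overleftarrow{\iii}}^{-1}$ to $(1,0)$ gives $\bigl(2^{|\iii|}, \sum_{k=1}^{|\iii|}(-1)^{i_k}2^{|\iii|-k}\lambda^{-k}\bigr)$, so the relevant slope is
\begin{equation*}
  t(\iii) = \sum_{k=1}^{|\iii|}(-1)^{i_k}2^{-k}\lambda^{-k}.
\end{equation*}
As $|\iii| \to \infty$, the tail of $\sum_k (-1)^{i_k} 2^{-k}\lambda^{-k}$ converges (absolutely, since $\sum_k 2^{-k}\lambda^{-k} = K_\lambda < \infty$ because $2\lambda > 1$), so the accumulation points $\overline{\vartheta}_2(\iii)$ are exactly the lines $V_t$ with
\begin{equation*}
  t = \sum_{k=1}^\infty (-1)^{i_k} 2^{-k}\lambda^{-k}.
\end{equation*}
The set of achievable values of this signed series, as the signs $(-1)^{i_k} \in \{+1,-1\}$ range over all infinite sequences, is a full interval precisely when each term dominates its tail, i.e.\ when $2^{-k}\lambda^{-k} \le \sum_{j>k} 2^{-j}\lambda^{-j}$ fails to leave gaps; since $2^{-k}\lambda^{-k} \ge \sum_{j > k} 2^{-j}\lambda^{-j}$ would create a Cantor-like gap, I must check the opposite inequality. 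Here $\sum_{j>k}2^{-j}\lambda^{-j} = 2^{-k}\lambda^{-k}\cdot \tfrac{1}{2\lambda-1}$, and since $\tfrac{1}{2\lambda-1} \ge 1$ exactly when $\lambda \le 1$, the tail dominates (or equals) the current term throughout the range $\tfrac12 < \lambda < 1$, so the values fill the entire symmetric interval $[-K_\lambda, K_\lambda]$ with $K_\lambda = \sum_{k=1}^\infty 2^{-k}\lambda^{-k}$.

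Finally I would assemble these: since $\overline{\vartheta}_2 \colon \Sigma \to \RP$ is continuous (Lemma \ref{lemma:rossi}\eqref{it:rossi2}) and $\Sigma$ is compact, $X_F = \overline{\vartheta}_2(\Sigma)$ is compact, hence the interval is closed, giving $X_F = \{V_t : t \in [-K_\lambda, K_\lambda]\}$. Care must be taken that $V_\infty \notin X_F$, which follows because every $t(\iii)$ stays bounded by $K_\lambda$, so no backward direction approaches vertical; this is consistent with the general fact (Lemma \ref{lemma:rossi}\eqref{it:rossi5}) that $\overline{\vartheta}_1(\Sigma)$ and $\overline{\vartheta}_2(\Sigma)$ lie in complementary multicones. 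I expect the main obstacle to be the interval-filling argument for $X_F$: one must verify rigorously that every target value $t \in [-K_\lambda, K_\lambda]$ is realized by some choice of signs, which is a greedy/binary-expansion argument relying on the tail-domination inequality $\sum_{j>k} 2^{-j}\lambda^{-j} \ge 2^{-k}\lambda^{-k}$ valid for $\lambda < 1$; the boundary case $\lambda = 1$ (excluded) is exactly where the interval would degenerate, so the strict inequality $\lambda < 1$ is essential and should be invoked explicitly.
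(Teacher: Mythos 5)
Your overall route is the same as the paper's: read the directions off the explicit formulas \eqref{eq:takagi-matrix} and identify $X_F=\overline{\vartheta}_2(\Sigma)$, $Y_F=\overline{\vartheta}_1(\Sigma)$ via Lemma \ref{lemma:alternate-furstenberg-dir}. Your $X_F$ half is sound, and in one respect it is more complete than the paper's proof, which asserts the interval-filling with a ``hence, it is clear'' while you correctly isolate the tail-domination condition $\sum_{j>k}(2\lambda)^{-j}\ge(2\lambda)^{-k}$, equivalent to $\lambda\le 1$, which makes the set of signed sums $\sum_k\pm(2\lambda)^{-k}$ the full interval $[-K_\lambda,K_\lambda]$. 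Your identification $\overline{\vartheta}_2(\iii)=V_{t(\iii)}$ is also legitimate, though you should say why: $\|A^{-1}_{\overleftarrow{\iii|_n}}(1,0)\|\ge 2^n$ is comparable to $\alpha_1(A^{-1}_{\overleftarrow{\iii|_n}})$, so the image of $(1,0)$ genuinely aligns with the major axis in the limit. (The paper instead computes the normalized limits $2^{-n}A^{-1}_{\overleftarrow{\iii|_n}}\to B_\iii$, which are rank one with image $V_{t(\iii)}$, and handles the reverse inclusion by a subsequence argument from the definition of $X_F$; the substance is the same.)

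There is, however, a genuine flaw in your $Y_F$ half: the claim that ``the slope $s(\iii)/2^{-|\iii|}$ blows up like $(2\lambda)^{|\iii|}$'' is unsupported --- you only have the upper bound $|s(\iii)|\le K_\lambda\lambda^{|\iii|}$, which gives no lower bound --- and it is in fact false for some parameters. Writing $r=(2\lambda)^{-1}$, the very tail-domination you exploit for $X_F$ permits total cancellation: if $1=r+r^2$, i.e.\ $2\lambda=\tfrac{1+\sqrt5}{2}$ so $\lambda\approx 0.809\in(\tfrac12,1)$, then for $\jjj=221$ one computes $A_{\jjj}(1,0)=2^{-3}(1,0)$, whence along the periodic word $\overline{\jjj}$ the vector $A_{\overline{\jjj}|_{3m}}(1,0)$ stays exactly horizontal for every $m$, and its slope never blows up. (This does not contradict $Y_F=\{V_\infty\}$, because $(1,0)$ lies in the repelling set of directions --- indeed $V_0\in X_F$ --- so the image of $(1,0)$ need not track $\vartheta_1$; this is precisely why the analogous test-vector argument works for the inverse matrices but not for the forward ones.) The conclusion $\vartheta_1(\iii|_n)\to V_\infty$ is nevertheless true and the repair is one line: since the first row of $A_{\iii|_n}$ is $(2^{-n},0)$, the whole ellipse $A_{\iii|_n}(B(0,1))$ lies in the vertical strip $\{|x_1|\le 2^{-n}\}$ while its major semiaxis has length $\alpha_1(A_{\iii|_n})\ge\lambda^n$, so $\sphericalangle(\vartheta_1(\iii|_n),V_\infty)\le\arcsin(2^{-n}/\lambda^n)\to 0$ uniformly; equivalently, as in the paper, every accumulation point of $\|A_{\iii_n}\|^{-1}A_{\iii_n}$ has vanishing first row and hence rank-one image $V_\infty$. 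Two minor slips: the $y$-axis is the direction of \emph{weakest} contraction (since $\lambda^n\gg 2^{-n}$), not strongest; and at $\lambda=1$ the interval would not degenerate (equality in the tail condition still fills the interval, as with binary expansions) --- gaps appear only when the condition fails, which is immaterial here since $\lambda<1$ is assumed.
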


\begin{proof}
Let $\iii \in \Sigma$ and observe that, by (\ref{eq:takagi-matrix}), we may define
\begin{equation*}
    B_\iii=\lim_{n\to\infty}2^{-n}A_{\overleftarrow{\iii|_n}}^{-1}=
    \begin{pmatrix}
    1&0\\
    \sum_{k=1}^{\infty}(-1)^{i_k}2^{-k}\lambda^{-k} & 0
    \end{pmatrix}.
\end{equation*}
Since $T_\lambda$ is dominated by Lemma \ref{lemma:takagi-dominated} and $\lim_{n \to \infty}\eta_1(A_{\overleftarrow{\iii|_n}}^{-1}) = (1,0)$, it follows from Lemma \ref{lemma:alternate-furstenberg-dir} that the word $\iii$ determines an element $V_{\iii} = \im(B_\iii)$ of the set $X_F$ by
\begin{align*}
    V_{\iii}=\projspan{ B_{\iii}(1,0)}=\projspan{ 1,\sum_{k=1}^{\infty}(-1)^{i_k}2^{-k}\lambda^{-k}}.
\end{align*}
Hence, it is clear that for any $V\in \{V_t \in \RP \colon t \in [-K_\lambda,K_\lambda]\}$, there exists $\iii\in\Sigma$ such that $V=V_{\iii}\in X_F$.

For the other inclusion, let $V\in X_F$. By the definition of $X_F$, we find a sequence $(c_n)_{n \in \N}$ of positive real numbers, a sequence $(\iii_n)_{n \in \N}$ of finite words $\iii_n = i^n_1 \cdots i^n_{|\iii_n|} \in \Sigma_*$, and a linear map $A$ of rank one such that $A = \lim_{n \to \infty}c_nA^{-1}_{\overleftarrow{\iii_n}}$ and $V=\im(A)$. Passing through a sub-sequence, if necessary, we see that
\begin{equation*}
    \lim_{n\to\infty}2^{-|\iii_n|}A^{-1}_{\overleftarrow{\iii_n}}=\begin{pmatrix}
    1&0\\
    \lim_{n\to\infty}\sum_{k=1}^{n}(-1)^{i^n_k}2^{-k}\lambda^{-k} & 0
    \end{pmatrix},
\end{equation*}
where the limit $\lim_{n\to\infty}\sum_{k=1}^{n}(-1)^{i^n_k+1}2^{-k}\lambda^{-k}$ exists. In particular, $c_n2^{-|\iii_n|}\to c\in \R\setminus\{0\}$ and $A$ is a constant multiple of the above matrix, which finishes the proof.

For $Y_F$, let $\iii\in\Sigma$ and using \eqref{eq:takagi-matrix} define
\begin{equation*}
    B_{\iii}=\lim_{n\to\infty} \lambda^{-n} A_{\iii|_n}=
    \begin{pmatrix}
    0&0\\
    \sum_{k=1}^{n}(-1)^{i_{n-k+1}+1}2^{-k}\lambda^{-k} & 1
    \end{pmatrix}.
\end{equation*}
Since $\im(B_{\iii})=V_{\infty}$, we have $\{V_{\infty}\}\subset Y_F$. For the other inclusion, as before, if $V\in Y_F$, we find a sequence $(c_n)_{n \in \N}$ of positive real numbers, a sequence $(\iii_n)_{n \in \N}$ of finite words $\iii_n = i^n_1 \cdots i^n_{|\iii_n|} \in \Sigma_*$, and a linear map $A$ of rank one such that $A = \lim_{n \to \infty}c_nA_{\iii_n}$ and $V=\im(A)$. Passing through a sub-sequence, if necessary, we see that
\begin{equation*}
    \lim_{n\to\infty}\lambda^{-|\iii_n|}A_{\iii_n}=\begin{pmatrix}
    0&0\\
    \sum_{k=1}^{n}(-1)^{i^n_{n-k+1}+1}2^{-k}\lambda^{-k} & 1
    \end{pmatrix},
\end{equation*}
and since $\im(A)=V_{\infty}$, the proof is finished.
\end{proof}

To finish verifying the assumptions of Theorem \ref{thm:dimasosc} for the Takagi function, it suffices to show that the Takagi function satisfies the weak bounded neighborhood condition. This is the purpose of the following lemma.

\begin{lemma}\label{lemma:takagi-sosc-bnc}
  The Takagi function $\Xl$ satisfies the BNC.
\end{lemma}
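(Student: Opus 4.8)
The plan is to verify the two requirements of the BNC separately: first the absence of exact overlaps, i.e.\ that $\iii \mapsto \fii_\iii$ is injective on $\Sigma_*$, and then the uniform bound on $\#\Phi(x,r)$ demanded by the WBNC. For the injectivity I would exploit the fact that the horizontal coordinate of the Takagi IFS decouples: by the explicit form of $\fii_1,\fii_2$, the map $\fii_\iii$ acts on the first coordinate as $x \mapsto 2^{-|\iii|}x + t_\iii$, so its projection sends $[0,1]$ onto the dyadic interval $I_\iii$ of length $2^{-|\iii|}$ whose left endpoint $t_\iii$ is the dyadic rational encoded by $\iii$. If $|\iii|\neq|\jjj|$, the linear parts already differ, since by \eqref{eq:takagi-matrix} their top-left entries are $2^{-|\iii|}$ and $2^{-|\jjj|}$; if $|\iii|=|\jjj|$ but $\iii\neq\jjj$, then $I_\iii\neq I_\jjj$, so the horizontal translations $t_\iii\neq t_\jjj$ differ and hence $\fii_\iii\neq\fii_\jjj$.

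For the WBNC I would bound $\#\Phi(x,r)$ in two stages. The defining condition $\alpha_2(A_\iii)\le r<\alpha_2(A_{\iii^-})$, together with Lemma \ref{lemma:takagi-dominated}, which gives $C^{-1}2^{-|\iii|}\le\alpha_2(A_\iii)\le 2^{-|\iii|}$, pins the length of any $\iii\in\Phi(x,r)$ into the range $2^{|\iii|}\in[1/(Cr),2/r)$. Since the two endpoints differ by the fixed factor $2C$, at most $\lceil\log_2(2C)\rceil+1$ integer values of $|\iii|$ can occur, a bound depending on $\lambda$ alone.

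The second stage controls, for each admissible length $n$, how many words of that length can appear. Since $\Xl\subset[0,1]\times\R$ and the horizontal coordinate decouples as above, $\fii_\iii(\Xl)$ is contained in the vertical strip $I_\iii\times\R$, and for $|\iii|=n$ these strips have pairwise disjoint interiors and width $2^{-n}$. If $\fii_\iii(\Xl)\cap B(x,r)\neq\emptyset$, then $I_\iii$ must meet the interval $[x_1-r,x_1+r]$ of length $2r$, where $x=(x_1,x_2)$. As the length constraint gives $2^{-n}>r/2$, this interval of length $2r<4\cdot2^{-n}$ can meet at most five consecutive dyadic intervals of width $2^{-n}$. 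Multiplying the per-length bound of five by the bounded number of admissible lengths yields $\sup_{x\in\Xl,\,r>0}\#\Phi(x,r)<\infty$, which is the WBNC; combined with the injectivity this gives the BNC.

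The argument is essentially elementary, and I do not expect a genuine obstacle; the only point requiring care is keeping the domination constant $C$ of Lemma \ref{lemma:takagi-dominated} consistent, so that the length window stays finite and the passage between the singular value $\alpha_2(A_\iii)$ and the strip width $2^{-|\iii|}$ is uniform. The conceptual crux — and the reason $\Xl$ behaves so well despite failing the SSC at the shared point $(\tfrac12,\tfrac12)$ — is that overlaps occur only in the vertical direction, while the horizontal projection tiles $[0,1]$ exactly, so the spatial counting never sees the overlaps.
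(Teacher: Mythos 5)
Your proof is correct and takes essentially the same route as the paper's: both use Lemma \ref{lemma:takagi-dominated} to confine $|\iii|$ to a window of boundedly many admissible lengths (the paper via the sets $\Phi_n(x,r)$, you via $2^{|\iii|}\in[1/(Cr),2/r)$), and then for each fixed length count the boundedly many dyadic vertical strips of width $2^{-n}$ that $B(x,r)$ can meet. Your explicit verification of the no-exact-overlap clause of the BNC via the decoupled horizontal coordinate is a point the paper's proof leaves implicit, and your per-length count (at most five strips, with the boundary intervals included) is in fact slightly more careful than the paper's estimate $2^{m_n}r$.
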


\begin{proof}
Let $C > 1$ be the constant of Lemma \ref{lemma:takagi-dominated}. Fix $x\in \Xl$ and $0<r<C^{-1}$, and let $k\in\N$ be the smallest natural number satisfying
\begin{equation*}
    2^kC^{-1}\geq 1.
\end{equation*}
Define
\begin{equation*}
    \Phi_n(x,r)=\{\varphi_{\iii}\colon 2^nC^{-1}2^{-|\iii|}\leq r < 2^nC^{-1}2^{-|\iii|+1} \text{ and } \varphi_{\iii}(X)\cap B(x,r)\ne\emptyset\}
\end{equation*}
for all $n \in \N$. It follows from Lemma \ref{lemma:takagi-dominated} that $\Phi(x,r)\subset \bigcup_{n=0}^k\Phi_n(x,r)$, so it suffices to show that the cardinality of $\Phi_n(x,r)$ is uniformly bounded for all $n \in \{0,\ldots,k\}$. For each $n\in\{0,\ldots,k\}$, let $m_n$ be the unique integer satisfying
\begin{equation}\label{eq:r}
    2^nC^{-1}2^{-m_n}\leq r < 2^{n}C^{-1}2^{-m_n+1}.
\end{equation}
Then clearly $\Phi_n(x,r)=\{\varphi_{\iii}\colon \iii\in\Sigma_{m_n}$ and $\varphi_{\iii}(X)\cap B(x,r)\ne\emptyset\}$. We observe from the construction that each $\varphi_{\iii}$ with $\iii\in\Sigma_{m_n}$ maps $T_\lambda$ inside a unique set of the form $[\frac{k}{2^{m_n}},\frac{k+1}{2^{m_n}}]\times \R$, where $k$ is an integer satisfying $0\leq k < 2^{m_n}$. Therefore, \eqref{eq:r} implies that $B(x,r)$ can intersect at most
\begin{equation*}
    2^{m_n}r\leq 2^{m_n}2^nC^{-1}2^{-m_n+1}\leq C^{-1}2^{k+1},
\end{equation*}
of the sets $\varphi_{\iii}(X)$ with $\iii\in\Sigma_{m_n}$. Since the upper bound is independent of $x$ and $r$, the set $\Xl$ satisfies the BNC.
\end{proof}

We are now ready to prove Theorem \ref{prop:takagi-assouad-formula}.

\begin{proof}[Proof of Theorem \ref{prop:takagi-assouad-formula}]
  It follows from Lemmas \ref{lemma:takagi-dominated}, \ref{lemma:furstenberg-is-interval}, and \ref{lemma:takagi-sosc-bnc} that the Takagi function $T_\lambda$ is a dominated self-affine set satisfying the BNC such that $X_F$ is a non-trivial projective interval. Furthermore, since the Takagi function is continuous, $T_\lambda(0)=0=T_\lambda(1)$ and, by \cite{MishuraSchied2019} (see also Lemma \ref{lemma:mishura-schied}), $M_\lambda = \max_{x \in [0,1]}T_\lambda(X) = \frac{1}{3(1-\lambda)} > 0$, we see that $\dimh(\proj_{V^{\perp}}(T_\lambda))=1$ for all $V\in\RP$. Therefore, by Theorem \ref{thm:dimasosc}, we have
  \begin{align*}
    \dima(\Xl)&=1+\max_{\atop{x\in T_\lambda}{V \in X_F}}\dimh(\Xl\cap (V+x))\\
    &=1+\max_{\atop{x\in T_\lambda}{V\in\RP\setminus Y_F}}\dima(\Xl\cap (V+x)).
  \end{align*}
  Furthermore, by Lemma \ref{lemma:furstenberg-is-interval}, $Y_F$ is a singleton containing only the $y$-axis $V_{\infty}$. Since $\Xl$ is a graph of a function, we have $\dima(\Xl\cap (V_{\infty}+x))=\dima(\{x\})=0$ for all $x\in \Xl$, which concludes the proof.
\end{proof}

To finish this section, we prove Theorem \ref{thm:takagi-assouad}.

\begin{proof}[Proof of Theorem \ref{thm:takagi-assouad}]
Since $T_\lambda$ is a graph of a function, it suffices to show that
\begin{equation} \label{eq:takagi-assouad-goal}
    \dimh(\Xl\cap(V+x))\leq \frac{ \log(2^{n_{\lambda}}-1)}{\log 2^{n_{\lambda}}}
\end{equation}
for all $x \in T_\lambda$ and $V \in \RP\setminus\{V_\infty\}$. We write
\begin{equation*}
    \Sigma_n(A) = \{\iii\in\Sigma_n\colon \varphi_{\iii}(\Xl)\cap A\ne\emptyset\}
\end{equation*}
for all $A \subset \R^2$ and $n \in \N$. Let us first show by induction that
\begin{equation} \label{eq:ind-sigma-est}
    \#\Sigma_{kn_{\lambda}}(V+x)\leq (2^{n_{\lambda}}-1)^k,
\end{equation}
for all $k\in\N$, $x\in T_\lambda$ and $V\in \RP\setminus\{V_\infty\}$.

First let $k=1$, $x\in T_\lambda$ and $V\in \RP\setminus\{0\}$. By symmetry, we may assume that $V=V_t$, with $t\ge0$ and without loss of generality, we may also assume that $x=(0,y)$ for some $y\in\R$ so that
\begin{equation*}
    V_t+x=\{(s,ts+y)\colon s\in\R\}.
\end{equation*}
Since $\#\Sigma_{n_{\lambda}}=2^{n_{\lambda}}$, it suffices to show that there exists at least one $\iii\in\Sigma_{n_{\lambda}}$ such that $\varphi_{\iii}(\Xl)\cap (V_t+x)=\emptyset$. Assume without loss of generality, that $\varphi_{\iii_1}(\Xl)\cap (V_t+x)\ne\emptyset$, where $\iii_1=\overline{1}|_{n_{\lambda}}$. From (\ref{eq:takagi-matrix}), we may deduce that $\varphi_{\iii_1}(\Xl)$ is contained in the rectangle $ [0,2^{-n_{\lambda}}]\times [0,(K_{\lambda}+M_{\lambda})\lambda^{n_{\lambda}}]$, and since $\varphi_{\iii_1}(\Xl)\cap (V_t+x)\ne\emptyset$, we have $y\leq (K_\lambda+M_{\lambda})\lambda^{n_{\lambda}}$. Write $x_{\frac{1}{2}}=(\frac{1}{2},\frac{1}{2})$ and let $\iii=1\overline{2}$. Since $T_{\lambda}(\frac{1}{2})=\frac{1}{2}$, we have $x_{\frac{1}{2}}\in \Xl$. In fact,
\begin{equation*}
    \varphi_{\iii|_{n_{\lambda}}}(1)=(\tfrac{1}{2},\tfrac{1}{2}).
\end{equation*}
It is also a simple exercise to show that $\varphi_{\iii|_{n_{\lambda}}}(0)\geq \frac{1}{2}$. Therefore, if $\tfrac{1}{2}t+y< \frac{1}{2}$, then $\varphi_{\iii|_{n_{\lambda}}}(X)\cap (V_t+x)=\emptyset$. On the other hand, if $t\tfrac{1}{2}+y\geq \frac{1}{2}$, then 
\begin{equation*}
    t\geq 1-2y\geq 1-2(K_{\lambda}+M_{\lambda})\lambda^{n_{\lambda}}>0
\end{equation*}
by the choice of $n_{\lambda}$. This means that the line $V_t+x$ has a positive slope. Since $\varphi_{\iii_2}(\Xl)\subset [1-2^{-n_{\lambda}},1]\times [0,(K_{\lambda}+M_{\lambda})\lambda^{n_{\lambda}}]$ by symmetry, where $\iii_2=\overline{2}|_{n_{\lambda}}$, and since $t(1-2^{n_{\lambda}})+y>\frac{1}{2}\geq (K_{\lambda}+M_{\lambda})\lambda^{n_{\lambda}}$, we have $\varphi_{\iii_2}(\Xl)\cap (V_t+x)=\emptyset$ which finishes the proof for $k=1$.

Let us then assume that \eqref{eq:ind-sigma-est} holds for $k\in\N$. Let $x\in T_\lambda$ and $V\in\RP\setminus\{0\}$. To finish the proof of \eqref{eq:ind-sigma-est}, we have to show that $\#\Sigma_{(k+1)n_{\lambda}}(V+x)\leq (2^{n_{\lambda}}-1)^{k+1}$. Notice that trivially
\begin{equation}\label{eq:cylinder-count}
    \Sigma_{(k+1)n_{\lambda}}=\{\iii\jjj\in\Sigma_{(k+1)n_{\lambda}}\colon \iii\in\Sigma_{kn_{\lambda}} \text{ and } \jjj\in\Sigma_{n_{\lambda}}\}.
\end{equation}
Let $\iii\in\Sigma_{kn_{\lambda}}$. If $\iii\not\in\Sigma_{kn_{\lambda}}(V+x)$, then $\iii\jjj\not\in\Sigma_{(k+1)n_{\lambda}}(V+x)$ for all $\jjj\in\Sigma_{n_{\lambda}}$, so we may assume that $\iii\in\Sigma_{kn_{\lambda}}(V+x)$. Since $\varphi_{\iii}$ is a bijection, we have for any $\jjj\in\Sigma_{n_{\lambda}}$, that $\varphi_{\iii\jjj}(\Xl)\cap (V+x)\ne\emptyset$ if and only if $\varphi_{\jjj}(\Xl\cap\varphi_{\iii}^{-1}(V+x))\ne\emptyset$. Since $\varphi_{\iii}^{-1}$ is affine, there exists $x_{\iii}\in T_\lambda$ and $V_{\iii}\in\RP\setminus\{V_\infty\}$ such that $\varphi_{\iii}^{-1}(V+x)=V_{\iii}+x_{\iii}$, and thus $\varphi_{\iii\jjj}(\Xl)\cap (V+x)\ne\emptyset$ if and only if $\jjj\in \Sigma_{n_{\lambda}}(V_{\iii}+x_{\iii})$. Since this is true for all $\iii\in\Sigma_{kn_{\lambda}}$, we have, by (\ref{eq:cylinder-count}) and the fact that the claim holds for $\Sigma_{n_{\lambda}}(V_\iii+x_\iii)$ and $\Sigma_{kn_{\lambda}}(V+x)$, that
\begin{align*}
    \#\Sigma_{(k+1)n_{\lambda}}(V+x)&=\sum_{\iii\in\Sigma_{kn_{\lambda}}(V+x)}\#\Sigma_{n_{\lambda}}(V_{\iii}+x_{\iii}) \\ 
    &\leq\#\Sigma_{kn_{\lambda}}(V+x)\cdot (2^{n_{\lambda}}-1) \leq (2^{n_{\lambda}}-1)^{k+1}.
\end{align*}
This concludes the proof of \eqref{eq:ind-sigma-est}.

Let us then show \eqref{eq:takagi-assouad-goal} by relying on \eqref{eq:ind-sigma-est}. It follows from the construction that for any $k\in\N$ and $\iii\in\Sigma_{kn_{\lambda}}$, the image $\varphi_{\iii}(\Xl)$ is contained in a vertical strip of width $2^{-kn_{\lambda}}$. For any $V=V_t\in\RP\setminus\{V_{\infty}\}$, we therefore have
\begin{equation} \label{eq:diam-est-sqrt}
    \diam(\varphi_{\iii}(\Xl)\cap (V_t+x))\leq 2^{-kn_{\lambda}}\sqrt{t^2+1}.
\end{equation}
Thus $\{\varphi_{\iii}(\Xl)\cap (V_t+x)\}_{\iii\in\Sigma_{kn_{\lambda}}(V_t+x)}$ is a $2^{-kn_{\lambda}}\sqrt{t^2+1}$-cover of $\Xl\cap (V_t+x)$ which together with \eqref{eq:ind-sigma-est} shows that
\begin{equation*}
    N_{2^{-kn_{\lambda}}\sqrt{t^2+1}}(\Xl\cap (V_t+x))\leq\#\Sigma_{kn_{\lambda}}(V_t+x)\leq (2^{n_{\lambda}}-1)^k.
\end{equation*}
Taking logarithms, dividing by $-\log(2^{-kn_{\lambda}}\sqrt{t^2+1})$, and letting $k\to\infty$ yields
\begin{equation*}
    \dimh(\Xl\cap (V_t+x))\leq\ldimm(\Xl\cap (V_t+x))\leq \frac{\log (2^{n_{\lambda}}-1)}{\log 2^{n_{\lambda}}}
\end{equation*}
as claimed.
\end{proof}

\section{Dimension conservation} \label{sec:proof-main2}

Let $\mu$ be the uniform Bernoulli measure on $\Sigma = \{1,2\}^\N$, that is, $\mu$ is the unique Borel probability measure with the property that $\mu([\iii]) = 2^{-|\iii|}$ for all $\iii \in \Sigma_*$. Let $\pi \colon \Sigma \to T_\lambda$ be the canonical projection onto the Takagi function. It is evident that $\nu = \pi_*\mu$ is the length measure on the $x$-axis lifted to the Takagi function. The following result, which by \eqref{eq:takagi-hausdorff-dim} and Lemma \ref{lemma:furstenberg-is-interval} is a restatement of Theorem \ref{thm:main2}, is the main result of this section.

\begin{theorem}\label{thm:equiv-marstrand}
  Suppose that $T_\lambda$ is the Takagi function and $\nu = \pi_*\mu$ is the canonical projection of the uniform Bernoulli measure. If
  \begin{equation*}
      \udimloc((\proj_{V^{\perp}})_*\nu,\proj_{V^{\perp}}(x))\geq 1
  \end{equation*}
  for all $x \in T_\lambda$ and $V\in X_F$, then
  \begin{equation} \label{eq:this-is-a-name}
      \max_{\atop{x\in T_\lambda}{V\in\RP}}\dimh(T_\lambda \cap (V+x))= \dimh(T_\lambda)-1.
  \end{equation}
  Conversely, if \eqref{eq:this-is-a-name} holds, then
  \begin{equation*}
      \ldimloc((\proj_{V^{\perp}})_*\nu,\proj_{V^{\perp}}(x))\geq 1
  \end{equation*}
  for all $x \in T_\lambda$ and $V\in\RP\setminus\{V_{\infty}\}$.
\end{theorem}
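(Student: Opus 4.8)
The plan is to reduce \eqref{eq:this-is-a-name} to the single equality $\dima(\Xl)=\dimh(\Xl)$ and then to detect the latter through the local scaling of the projected measures. The first line of Theorem~\ref{prop:takagi-assouad-formula} gives $\max_{x,V}\dimh(\Xl\cap(V+x))=\dima(\Xl)-1$, so by \eqref{eq:takagi-hausdorff-dim} the equality \eqref{eq:this-is-a-name} holds if and only if $\dima(\Xl)=\dimh(\Xl)$. I use throughout that, for $V=V_s$ with $s\neq\infty$, the orthogonal projection $\proj_{V_s^\perp}$ differs from the map $\proj_{-s}$ of the introduction only by a fixed linear isomorphism of the target, and that $\proj_{-s}$ has fibres exactly the lines $V_s+x$; hence $(\proj_{V_s^\perp})_*\nu$ and $(\proj_{-s})_*\nu$ have the same pointwise dimensions, and by Lemma~\ref{lemma:furstenberg-is-interval} the condition $V_s\in X_F$ reads $s\in[-K_\lambda,K_\lambda]$, while $V_\infty$ is always excluded.

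Write $\beta=\tfrac{\log 2}{-\log\lambda}>1$, so that $\dimh(\Xl)=2-\tfrac1\beta$ and $2-\dimh(\Xl)=\tfrac1\beta$. Fix $V=V_s\neq V_\infty$, $x\in\Xl$ and $c=\proj_{-s}(x)$. From \eqref{eq:takagi-matrix} each interval $\proj_{-s}(\varphi_\jjj(\Xl))$, $\jjj\in\Sigma_n$, has diameter at most $C_0\lambda^n$. Since the cylinders $[\jjj]$ carry disjoint mass $2^{-n}$ and every $\jjj\in\Sigma_n(V_s+x)$ (notation from the proof of Theorem~\ref{thm:takagi-assouad}) satisfies $c\in\proj_{-s}(\varphi_\jjj(\Xl))\subset B(c,C_0\lambda^n)$, we obtain
\begin{equation*}
  (\proj_{-s})_*\nu(B(c,C_0\lambda^n))\ge 2^{-n}\,\#\Sigma_n(V_s+x).
\end{equation*}
Because the level-$n$ cylinders occupy distinct vertical strips of width $2^{-n}$, the count $\#\Sigma_n(V_s+x)$ is comparable to $N_{2^{-n}}(\Xl\cap(V_s+x))$, so passing to exponents yields $\udimloc((\proj_{-s})_*\nu,c)\le\beta\,(1-\udimm(\Xl\cap(V_s+x)))$. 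This settles the forward direction: the hypothesis $\udimloc\ge 1$ for all $x$ and all $V\in X_F$ forces $\udimm(\Xl\cap(V+x))\le 1-\tfrac1\beta=\dimh(\Xl)-1$, hence $\dimh(\Xl\cap(V+x))\le\dimh(\Xl)-1$ for every $V\in X_F$, and combining with $\max_{x,V\in X_F}\dimh(\Xl\cap(V+x))=\dima(\Xl)-1$ gives $\dima(\Xl)\le\dimh(\Xl)$.

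For the converse, assume \eqref{eq:this-is-a-name}, i.e.\ $\dima(\Xl)=\dimh(\Xl)$; then the second line of Theorem~\ref{prop:takagi-assouad-formula} gives $\dima(\Xl\cap(W+y))\le\dimh(\Xl)-1$ for all $y$ and all $W\neq V_\infty$. Here I need the reverse mass estimate $(\proj_{-s})_*\nu(B(c,\lambda^n))\le 2^{-n}N_n$, where $N_n=\#\{\jjj\in\Sigma_n\colon\proj_{-s}(\varphi_\jjj(\Xl))\cap B(c,\lambda^n)\neq\emptyset\}$ is a strip count. Using now also the uniform lower bound $\diam\proj_{-s}(\varphi_\jjj(\Xl))\ge c_0\lambda^n$ — valid because $x\mapsto t'x+T_\lambda(x)$ is non-constant uniformly for $t'$ in a compact range — a Fubini-type estimate over the parallel fibres gives
\begin{equation*}
  N_n\le\frac{1}{c_0\lambda^n}\int_{B(c,(1+C_0)\lambda^n)}\#\Sigma_n(\proj_{-s}^{-1}(c'))\,\dd c'\le\frac{2(1+C_0)}{c_0}\,\sup_{c'}\#\Sigma_n(\proj_{-s}^{-1}(c')).
\end{equation*}
Granting the uniform slice bound $\sup_{c'}\#\Sigma_n(\proj_{-s}^{-1}(c'))\le 2^{n(\dimh(\Xl)-1)+o(n)}$, one obtains $(\proj_{-s})_*\nu(B(c,\lambda^n))\le 2^{-n/\beta+o(n)}$, whence the exponent at scale $\lambda^n$ tends to $1$; since $\ldimloc=\limsup_{r\downarrow 0}$ this yields $\ldimloc((\proj_{-s})_*\nu,c)\ge 1$ for all $x$ and all $V=V_s\neq V_\infty$.

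The crux is the uniform slice bound feeding the Fubini estimate. From $\dima(\Xl\cap(W+y))\le\dimh(\Xl)-1$ and the vertical-strip structure one gets $\#\Sigma_n(W+y)\le 2^{n(\dimh(\Xl)-1)}$ up to a constant for each individual slice, but the displayed estimate demands this uniformly over all translates $c'$ of the fixed direction. I expect to supply this through the submultiplicativity already used in the proof of Theorem~\ref{thm:takagi-assouad}, which makes $n\mapsto\log\sup_{y,W}\#\Sigma_n(W+y)$ subadditive, so that $\tfrac{1}{n\log 2}\log\sup_{y,W}\#\Sigma_n(W+y)$ converges by Fekete's lemma; the delicate point is to identify the limit with $\sup_{y,\,W\neq V_\infty}\udimm(\Xl\cap(W+y))\le\dima(\Xl)-1$, which calls for a semicontinuity argument exploiting compactness of the family of slices and stability of the counts $\#\Sigma_n$ under small perturbations of $(W,y)$. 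The remaining points — comparability of $(\proj_{-s})_*\nu(B(c,\lambda^n))$ with the cylinder counts, the two-sided diameter bound $c_0\lambda^n\le\diam\proj_{-s}(\varphi_\jjj(\Xl))\le C_0\lambda^n$, and the passage from the discrete scales $\lambda^n$ to arbitrary $r$ — are routine but must be carried out with care.
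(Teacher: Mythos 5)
Your forward implication is sound and is essentially the paper's own argument: the one-sided mass estimate $(\proj_{V^{\perp}})_*\nu(B(\proj_{V^{\perp}}(x),C_0\lambda^n))\ge 2^{-n}\,\#\Sigma_n(V+x)$, combined with the count-to-cover comparison of Lemma \ref{thm:minkowski-sigma} and the bootstrapping through Theorem \ref{prop:takagi-assouad-formula}, is exactly the easy half of Proposition \ref{prop:weak-dimension-conservation} followed by the same deduction $\dima(\Xl)\le\dimh(\Xl)$ that the paper makes; your reduction of \eqref{eq:this-is-a-name} to $\dima(\Xl)=\dimh(\Xl)$ and the routine points you flag (two-sided diameter bounds $c_0\lambda^n\le\diam\proj_{-s}(\varphi_{\jjj}(\Xl))\le C_0\lambda^n$, discretization of scales) are all fine.

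The converse, however, contains a genuine gap, and you have located it yourself: the uniform slice bound $\sup_{c'}\#\Sigma_n(\proj_{-s}^{-1}(c'))\le 2^{n(\dimh(\Xl)-1)+o(n)}$ feeding your Fubini estimate. The hypothesis \eqref{eq:this-is-a-name} gives, via Theorem \ref{prop:takagi-assouad-formula}, only that each individual slice satisfies $\dima(\Xl\cap(W+y))\le\dimh(\Xl)-1$, hence $\#\Sigma_n(W+y)\le C(W,y)\,2^{n(\dimh(\Xl)-1)}$ with a constant depending on the slice, whereas your integral bound requires this uniformly over a continuum of translates. Your proposed repair --- Fekete's lemma for $a_n=\log\sup_{y,W}\#\Sigma_n(W+y)$ plus a ``semicontinuity'' identification of the limit --- is circular at the decisive step: $\#\Sigma_n$ is not upper semicontinuous along a converging sequence of lines, since if $L_k\to L$ then cylinders meeting $L_k$ are only guaranteed to meet a thin neighborhood of $L$, so the identification forces you to compare line counts $\#\Sigma_n(L)$ with neighborhood counts $\#\Sigma_n([L]_{c\lambda^n})$. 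Controlling that discrepancy is precisely the content of the paper's Lemma \ref{lemma:few-bad} (the bad-word bound $\#\Bad_{n,c}^k\le K\,\#\Sigma_k(V+x)$), which is reduced by the affine pull-back Lemma \ref{lemma:pull-back-cylinder} to Lemma \ref{lemma:cylinder-bound}: a line disjoint from $\Xl$ meets at most a uniformly bounded number $K$ of level-$n$ cylinders even after thickening by $C\lambda^n$. That lemma is the technical heart of the section, occupying Subsection \ref{subsec:crux} and relying on the maximizer structure of $T_\lambda$ (Mishura--Schied, Lemma \ref{lemma:mishura-schied}) together with the cone and vertical-distance estimates of Lemmas \ref{lemma:empty-cone}--\ref{lemma:crux}; nothing in your sketch substitutes for it. Note finally that the paper sidesteps your uniformity problem entirely: Proposition \ref{prop:weak-dimension-conservation} is a pointwise dimension-conservation identity for each fixed line, with constants $c,K$ allowed to depend on $(x,V)$, so only the bad words along that single line must be counted --- your Fubini route over parallel fibres demands strictly more than the paper ever proves, and that stronger uniform statement is left unestablished in your proposal.
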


This section is devoted to the proof of this theorem. We start with an auxiliary lemma whose proof is standard. Recall that
\begin{equation*}
    \Sigma_n(A) = \{\iii\in\Sigma_n\colon \varphi_{\iii}(\Xl)\cap A\ne\emptyset\}
\end{equation*}
for all $A \subset \R^2$ and $n \in \N$.

\begin{lemma} \label{thm:minkowski-sigma}
For every $x\in \Xl$ and $V \in X_F$, we have
\begin{equation*}
    \ldimm(\Xl\cap (V+x)) = \liminf_{n\to\infty}\frac{\log\#\Sigma_n(V+x)}{n\log 2}
\end{equation*}
and 
\begin{equation*}
    \udimm(\Xl\cap (V+x)) = \limsup_{n\to\infty}\frac{\log\#\Sigma_n(V+x)}{n\log 2}.
\end{equation*}
\end{lemma}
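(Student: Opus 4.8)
The plan is to estimate the Minkowski covering numbers $N_r(T_\lambda \cap (V+x))$ directly in terms of the counting quantity $\#\Sigma_n(V+x)$, exploiting the self-affine structure. The key geometric input is Lemma~\ref{lemma:takagi-dominated}, which tells us that for every $\iii \in \Sigma_n$ the cylinder $\varphi_{\iii}(T_\lambda)$ lies in a vertical strip of width $2^{-n}$ (since $\alpha_2(A_\iii) \asymp 2^{-n}$), and the length measure of $\varphi_\iii(T_\lambda)$ is contracted by $\alpha_1(A_\iii) \asymp \lambda^n$ in the long direction. Since $\tfrac12 < \lambda < 1$, the dominant scale for covering is the vertical width $2^{-n}$, and the intersection of each cylinder with the line $V+x$ has diameter controlled by this width, as made explicit in the estimate of the form \eqref{eq:diam-est-sqrt}.

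First I would prove the upper bound $\udimm(T_\lambda\cap(V+x)) \le \limsup_n \frac{\log\#\Sigma_n(V+x)}{n\log 2}$, which follows the template of the final part of the proof of Theorem~\ref{thm:takagi-assouad}. For fixed $V = V_t$, the collection $\{\varphi_\iii(T_\lambda) \cap (V+x)\}_{\iii\in\Sigma_n(V+x)}$ is a cover of $T_\lambda \cap (V+x)$ by sets of diameter at most $2^{-n}\sqrt{t^2+1}$, so
\begin{equation*}
    N_{2^{-n}\sqrt{t^2+1}}(T_\lambda\cap(V+x)) \le \#\Sigma_n(V+x).
\end{equation*}
Taking logarithms, dividing by $-\log(2^{-n}\sqrt{t^2+1}) = n\log 2 - \tfrac12\log(t^2+1)$, and letting $n\to\infty$ gives both the $\liminf$ and $\limsup$ versions of the upper bound, since the additive constant $\tfrac12\log(t^2+1)$ is absorbed in the limit.

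The reverse inequality is the more delicate direction, and is the main obstacle. The issue is that covering numbers use balls of a fixed radius $r$, whereas the sets $\varphi_\iii(T_\lambda)\cap(V+x)$ can be geometrically thin and overlapping, so a single small ball might meet many cylinders, causing $N_r$ to undercount $\#\Sigma_n$. To control this I would invoke the BNC established in Lemma~\ref{lemma:takagi-sosc-bnc}: the bounded neighborhood condition guarantees that there is a uniform constant $M$ such that any ball of the relevant radius meets at most $M$ of the cylinders $\varphi_\iii(T_\lambda)$ with $\alpha_2(A_\iii)\asymp r$. Consequently each covering ball of radius comparable to $2^{-n}$ can be charged to at most $M$ distinct words $\iii \in \Sigma_n(V+x)$, yielding
\begin{equation*}
    \#\Sigma_n(V+x) \le M \cdot N_{c2^{-n}}(T_\lambda \cap (V+x))
\end{equation*}
for a suitable constant $c>0$. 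Taking logarithms and passing to the limit, the factor $M$ and the constant $c$ disappear and we recover $\liminf_n \frac{\log\#\Sigma_n(V+x)}{n\log 2} \le \ldimm(T_\lambda\cap(V+x))$, and likewise for the $\limsup$. Combining the two directions gives the claimed equalities. The only subtlety to verify carefully is that the radius at which the BNC applies is genuinely comparable to $2^{-n}$ uniformly in $n$, which is exactly what the two-sided bound $C^{-1}2^{-|\iii|}\le \alpha_2(A_\iii)\le 2^{-|\iii|}$ from Lemma~\ref{lemma:takagi-dominated} provides.
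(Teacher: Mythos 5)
Your proof is correct, and the upper bound is verbatim the paper's argument; where you diverge is in the lower bound. The paper's proof does not invoke the BNC at all: it simply observes that each $\varphi_{\iii}(\Xl)$ with $\iii\in\Sigma_n$ lies in a distinct dyadic vertical strip $[\tfrac{k}{2^n},\tfrac{k+1}{2^n}]\times\R$, so any set $U_i$ of diameter at most $2^{-n}$ in a cover of $\Xl\cap(V+x)$ can intersect at most two of the sets $\varphi_{\iii}(\Xl)\cap(V+x)$, giving $\#\Sigma_n(V+x)\le 2N_{2^{-n}}(\Xl\cap(V+x))$ immediately, with multiplicity constant $2$. Your route through Lemma~\ref{lemma:takagi-sosc-bnc} is valid but heavier, and to make it airtight you need two pieces of bookkeeping that you only gesture at: first, the BNC counts maps $\varphi_{\iii}$ intersecting $B(z,r)$ with $z\in\Xl$, while covering balls may be centered anywhere, so you must recenter at a point of $\Xl\cap(V+x)$ inside each ball at the cost of doubling the radius; second, $\Phi(z,r)$ consists of stopping words with $\alpha_2(A_{\iii})\le r<\alpha_2(A_{\iii^-})$ rather than words of fixed length $n$, so you must pass from each $\iii\in\Sigma_n(V+x)$ to its unique stopping prefix at scale $\asymp 2^{-n}$ and use the two-sided bound $C^{-1}2^{-|\iii|}\le\alpha_2(A_{\iii})\le 2^{-|\iii|}$ of Lemma~\ref{lemma:takagi-dominated} to see that each stopping word has only boundedly many length-$n$ extensions. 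With these details filled in your inequality $\#\Sigma_n(V+x)\le M'N_{c2^{-n}}(\Xl\cap(V+x))$ holds and the constants wash out in the limit exactly as you say. It is worth noting, though, that the paper's proof of the BNC for $\Xl$ is itself built on the very same dyadic-strip observation, so your argument is ultimately a detour through a lemma whose content is the direct fact the paper uses; what the BNC route buys is generality (it would survive in settings where the cylinders are not aligned with a dyadic grid), while the paper's observation buys brevity and an explicit multiplicity of two.
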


\begin{proof}
Let $x \in T_\lambda$ and $V \in X_F$. By Lemma \ref{lemma:furstenberg-is-interval}, there is $t \in [-K_\lambda,K_\lambda]$ such that $V = V_t = \projspan{(1,t)}$. Similarly as in \eqref{eq:diam-est-sqrt}, we have
\begin{equation*}
    \diam(\varphi_{\iii}(\Xl)\cap (V+x)) \leq 2^{-n}\sqrt{t^2+1}
\end{equation*}
for all $\iii\in\Sigma_{n}(V+x)$. Therefore, the collection $\{\varphi_{\iii}(\Xl)\cap (V+x)\}_{\iii\in\Sigma_n(V+x)}$ is a $2^{-n}\sqrt{t^2+1}$-cover of $\Xl\cap (V+x)$, which proves the upper bounds. The lower bounds follow by observing that if $\{U_i\}$ is any $2^{-n}$-cover of $\Xl\cap (V+x)$, then every $U_i$ intersects at most two of the sets in $\{\varphi_{\iii}(\Xl)\cap (V+x)\}_{\iii\in\Sigma_n(V+x)}$.
\end{proof}

The above lemma connects $\Sigma_n(V+x)$ to the Minkowski dimensions of the slices. As we further wish to connect the pointwise dimensions on $T_\lambda$ to the slices, we are interested in estimating the number of words $\iii$ not in $\Sigma_n(V+x)$ for which $\fii_\iii(T_\lambda)$ is still relatively close to $V+x$. The \emph{$r$-neighborhood} of a set $A \subset \R^2$ is denoted by $[A]_r = \{x \in \R^2 : |x-y| \le r$ for some $y \in A\}$. For $n\in\N$ and $c>0$, we define the \emph{set of bad words at level $n$} by
\begin{equation*}
    \Bad_{n,c} = \Sigma_n([V+x]_{c\lambda^n}) \setminus \Sigma_n(V+x).
\end{equation*}
We say that a bad word $\iii$ at level $n$ is \emph{generated at level $k$} if $k \in \{1,\ldots,n\}$ is the smallest number such that $\iii|_{k}\not\in\Sigma_{k}(V+x)$, and we denote the set of these length $n$ words by $\Bad_{n,c}^k$. Since every bad word at level $n$ is generated at exactly one level $k \le n$, we have
\begin{equation*}
    \#\Bad_{n,c}=\sum_{k=1}^{n}\#\Bad_{n,c}^k.
\end{equation*}
The following lemma is the key observation in our analysis.

\begin{lemma}\label{lemma:few-bad}
  For every $x\in \Xl$ and $V\in \RP\setminus\{V_{\infty}\}$ there are constants $c,K>0$ such that
  \begin{equation*}
      \#\Bad_{n,c}^k\leq K\cdot\#\Sigma_k(V+x)
  \end{equation*}
  for all $n,k \in \N$ with $k \le n$.
\end{lemma}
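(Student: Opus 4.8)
The plan is to bound $\#\Bad_{n,c}^k$ by the product of the number of words \emph{generating} bad words at level $k$ and a uniform bound on how many bad words each generator spawns. The generator count is controlled by a clean combinatorial observation: if $\iii\in\Bad_{n,c}^k$, then $\mathbf{l}:=\iii|_k$ satisfies $\mathbf{l}^-\in\Sigma_{k-1}(V+x)$ while $\mathbf{l}\notin\Sigma_k(V+x)$. Writing $\varphi_{\mathbf{l}^-}(\Xl)=\varphi_{\mathbf{l}}(\Xl)\cup\varphi_{\mathbf{s}}(\Xl)$ for the sibling $\mathbf{s}$ of $\mathbf{l}$, the fact that $\varphi_{\mathbf{l}^-}(\Xl)$ meets $V+x$ but $\varphi_{\mathbf{l}}(\Xl)$ does not forces $\mathbf{s}\in\Sigma_k(V+x)$. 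Since a generator determines its parent and hence its sibling, the assignment $\mathbf{l}\mapsto\mathbf{s}$ is injective into $\Sigma_k(V+x)$, so the number of generators is at most $\#\Sigma_k(V+x)$. It therefore suffices to prove that there is a constant $K$, depending only on $\lambda$ and $V$, with $\#\{\mathbf{m}\in\Sigma_{n-k}\colon \varphi_{\mathbf{l}\mathbf{m}}(\Xl)\cap[V+x]_{c\lambda^n}\ne\emptyset\}\le K$ for every generator $\mathbf{l}$.

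To estimate this suffix count I would renormalise by $\varphi_{\mathbf{l}}^{-1}$. Three features of the Takagi matrices make this clean. First, $(0,1)$ is a common eigenvector with eigenvalue $\lambda$, so $\varphi_{\mathbf{l}}^{-1}$ scales \emph{vertical} distances exactly by $\lambda^{-k}$; hence $\varphi_{\mathbf{l}\mathbf{m}}(\Xl)$ lying within $c\lambda^n$ of $V+x$ becomes $\varphi_{\mathbf{m}}(\Xl)$ lying within vertical distance $c'\lambda^{N}$ of $\ell:=\varphi_{\mathbf{l}}^{-1}(V+x)$, where $N=n-k$ and $c'=\sqrt{1+t^2}\,c$. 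Second, each step $\varphi_i^{-1}$ sends a line of slope $a$ to one of slope $(a\mp1)/(2\lambda)$, which (using $2\lambda>1$) keeps the slope in $[-t_{\max},t_{\max}]$ with $t_{\max}=|t|+K_\lambda$, the fixed points being exactly $\mp K_\lambda$ as in Lemma \ref{lemma:furstenberg-is-interval}. Third, $\varphi_{\mathbf{l}}^{-1}$ is orientation- and ``vertical''-preserving, so the disjointness $\varphi_{\mathbf{l}}(\Xl)\cap(V+x)=\emptyset$ renormalises to $\Xl$ lying (say) entirely \emph{above} $\ell$ on $[0,1]$. Finally, since the tail $\sum_{i\ge N}\lambda^i\dist(2^iu,\Z)$ is non-negative and vanishes at dyadic points, the minimum of $T_\lambda-\ell$ over a level-$N$ dyadic interval is attained at an endpoint; consequently ``$\varphi_{\mathbf{m}}(\Xl)$ within $c'\lambda^N$ of $\ell$'' is equivalent to one endpoint of the interval being a \emph{near node}, i.e.\ a level-$N$ dyadic $p$ with $T_\lambda(p)-\ell(p)\le c'\lambda^N$. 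Thus the suffix count is at most twice $G_N(\ell):=\#\{\text{level-}N\text{ near nodes}\}$, and the whole lemma reduces to the scale-free bound $\sup_{\ell}\sup_N G_N(\ell)<\infty$, the supremum over all \emph{admissible} lines (slope at most $t_{\max}$, graph above).

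For this core estimate I would exploit the self-affine gap recursion. Renormalising a level-$m$ dyadic interval $I$ to $[0,1]$ and setting $G_I=\min_{[0,1]}(T_\lambda-\varphi_I^{-1}\ell)$, one checks $G_I=\lambda\min(G_{I1},G_{I2})$, so $I$ is near (at its own scale) precisely when $G_I\le c'$; moreover gaps \emph{increase} by a factor $\ge\lambda^{-1}$ along every descending path, so the near intervals form a finite subtree rooted at $[0,1]$ whose depth is $O(\log(1/G_{[0,1]}))$. A binary tree with $b$ branch points has at most $b+1$ leaves, so the task becomes bounding the number of \emph{branch points}: intervals $I$ for which $\ell$ grazes $T_\lambda$ near-tangentially from below in \emph{both} halves $[0,\tfrac12]$ and $[\tfrac12,1]$ simultaneously.

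The main obstacle is exactly this branch-point bound, and it is where Takagi-specific roughness (rather than the soft gap recursion, which alone only yields the useless estimate $G_N\lesssim G_{[0,1]}^{-\log2/\log(1/\lambda)}$) must be injected. The point is that a bounded-slope line lying below $T_\lambda$ cannot hug the graph near-linearly over a wide region: concretely, I would use the explicit growth $T_\lambda(u)\ge u$ together with the renormalised estimate $T_\lambda(u)\gtrsim K_\lambda\lambda^m$ for $u\in[2^{-m},\tfrac12]$ and its self-affine copies to show that near a near-tangency the graph climbs away from $\ell$ at a definite rate, so that, after choosing $c$ (hence $c'$) small, a simultaneous two-sided near-tangency can recur only boundedly often. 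This is the analogue for non-crossing lines of the counting in Theorem~\ref{thm:takagi-assouad}; the essential difference is that a \emph{crossing} slice threads the fractal and its covering number grows like $(2^{n_\lambda}-1)^{N/n_\lambda}$, whereas a line the graph never crosses can only graze it, which is what ultimately forces $G_N(\ell)$ to stay bounded. I expect the verification that this grazing bound holds uniformly in $\ell$ and $N$, and in particular a robust treatment of the regime $\lambda$ close to $1$ where low-order terms of the partial sums compete with the leading one, to be the most delicate part of the argument.
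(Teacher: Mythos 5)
Your reduction is correct and in fact mirrors the paper's own: you first bound the number of generators at level $k$ (your injective sibling map $\mathbf{l}\mapsto\mathbf{s}$ into $\Sigma_k(V+x)$ is a clean variant of the paper's bound via $\#\Sigma_{k-1}(V+x)$), and you then renormalise each generator by $\varphi_{\mathbf{l}}^{-1}$, noting that vertical distances scale exactly by $\lambda^{-k}$, that the slope recursion $a\mapsto(a\mp1)/(2\lambda)$ keeps slopes in a fixed compact interval (this is precisely Lemma \ref{lemma:pull-back-cylinder}), and that disjointness from the line is preserved. After these steps the lemma reduces, exactly as in the paper, to a scale-free uniform bound on $\#\Sigma_N([\ell]_{c\lambda^N})$ over all lines $\ell$ of bounded slope disjoint from $\Xl$ --- which is Lemma \ref{lemma:cylinder-bound}.

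The genuine gap is that this core estimate is never proved, and it is where essentially all the work in the paper lies. Your near-node/gap-recursion framework is a reasonable reformulation, and you rightly observe that the soft recursion $G_I=\lambda\min(G_{I1},G_{I2})$ alone is useless as $G_{[0,1]}\to0$; but the decisive branch-point bound is left as a hope (``the graph climbs away from $\ell$ at a definite rate \dots\ can recur only boundedly often''), which you yourself flag as the delicate part. In the paper this crux is carried out through the precise structure of the maximum of $T_\lambda$: the Mishura--Schied identification of the unique maximizers $\tfrac13,\tfrac23$ with unique codings $\overline{12},\overline{21}$ (Lemma \ref{lemma:mishura-schied}), the empty cone above $\overline{x}_{\max}$ (Lemma \ref{lemma:empty-cone}), explicit vertical-distance estimates of order $\lambda^{2n}$ from $\overline{x}_{\max}$ to the adjacent cylinders $\iii^L_n,\iii^R_n,\iii^{RR}_n$ (Lemma \ref{lemma:vertical-distance}), an induction showing that a horizontal line disjoint from and above the graph stays $C\lambda^{2n}$-close to at most the single cylinder $\iii_{\max}|_{2n}$ (Lemma \ref{lemma:crux}), and slope-reduction arguments (Lemmas \ref{lemma:geometric1} and \ref{lemma:geometric2}, with the thresholds $\delta_k$ and renormalisation along $\overline{1}|_k$) handling general bounded slopes. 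None of this machinery, nor a substitute for it, appears in your sketch. There is also a concrete flaw in the part you did write out: the ``minimum attained at an endpoint'' argument uses $R_N\geq0$ and is valid only when the graph lies \emph{above} $\ell$; in the opposite case --- the hard one, where the line grazes the graph near the non-dyadic maximizer $\tfrac13$ --- the closest approach is interior to the dyadic interval and your stated equivalence fails (a one-sided substitute via the oscillation bound $\mathrm{osc}\leq c\lambda^N$ would survive, but the above/below asymmetry you dismiss with ``(say)'' is exactly the distinction on which the paper's proof turns). So the proposal correctly reduces the lemma but leaves its crux unproven.
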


The proof of the lemma is technical and takes several pages. Trying not to disrupt the flow of the presentation, we have postponed it into Subsection \ref{subsec:crux}. Lemma \ref{lemma:few-bad} allows us to connect the pointwise dimensions of the length measure on the $x$-axis lifted to the Takagi function with the Minkowski dimensions of the slices. This is the content of the following proposition. It generalizes a similar result of Manning and Simon \cite[Proposition 4]{ManningSimon2013} for the Sierpi\'nski carpet to the self-affine regime.

\begin{proposition}\label{prop:weak-dimension-conservation}
  If $\Xl$ is the Takagi function and $\nu = \pi_*\mu$ is the canonical projection of the uniform Bernoulli measure, then
  \begin{equation*}
      \udimloc((\proj_{V^{\perp}})_*\nu, \proj_{V^{\perp}}(x)) + \frac{\log \frac12}{\log\lambda}\ldimm(\Xl\cap (V+x)) = \frac{\log \frac12}{\log\lambda}
  \end{equation*}
  and 
  \begin{equation*}
      \ldimloc((\proj_{V^{\perp}})_*\nu, \proj_{V^{\perp}}(x)) + \frac{\log \frac12}{\log\lambda}\udimm(\Xl\cap (V+x)) = \frac{\log \frac12}{\log\lambda}
  \end{equation*}
  for all $x\in \Xl$ and $V\in\RP\setminus\{V_{\infty}\}$.
\end{proposition}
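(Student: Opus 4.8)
The plan is to relate the mass of a thin Euclidean tube around the line $V+x$ to the cylinder count $\#\Sigma_n(V+x)$ and then pass to the limit; the only genuinely new input is Lemma \ref{lemma:few-bad}, everything else being bookkeeping that is insensitive to subexponential factors. First I would record that, since the Euclidean distance from a point $y$ to the line $V+x$ equals $|\proj_{V^\perp}(y)-\proj_{V^\perp}(x)|$, the preimage $\proj_{V^\perp}^{-1}(B(\proj_{V^\perp}(x),r))$ is exactly the neighbourhood $[V+x]_r$, so that
\[
  (\proj_{V^\perp})_*\nu(B(\proj_{V^\perp}(x),r)) = \nu([V+x]_r).
\]
The relevant scales are $r=c\lambda^n$: by \eqref{eq:takagi-matrix} every $\fii_\iii(T_\lambda)$ with $|\iii|=n$ is contained in a box of width $2^{-n}$ and height at most $(K_\lambda+M_\lambda)\lambda^n$, so its image under $\proj_{V^\perp}$ is an interval of length comparable to $\lambda^n$.

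The key estimate is the two-sided bound
\[
  2^{-n}\#\Sigma_n(V+x) \le \nu([V+x]_{c\lambda^n}) \le 2^{-n}\bigl(\#\Sigma_n(V+x)+\#\Bad_{n,c}\bigr),
\]
valid for a suitable $c$. Here I would work entirely in symbolic space, where the cylinders $[\iii]$, $\iii\in\Sigma_n$, are genuinely disjoint with $\mu([\iii])=2^{-n}$; this sidesteps the fact that the images $\fii_\iii(T_\lambda)$ overlap. The upper bound follows from $\pi^{-1}([V+x]_{c\lambda^n})\subset\bigcup_{\iii\in\Sigma_n([V+x]_{c\lambda^n})}[\iii]$ together with $\Sigma_n([V+x]_{c\lambda^n})=\Sigma_n(V+x)\cup\Bad_{n,c}$, while the lower bound follows because, by the cylinder-image estimate above, taking $c$ large forces $\fii_\iii(T_\lambda)\subset[V+x]_{c\lambda^n}$, hence $[\iii]\subset\pi^{-1}([V+x]_{c\lambda^n})$, for every $\iii\in\Sigma_n(V+x)$. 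To absorb the bad words I would invoke Lemma \ref{lemma:few-bad} and the monotonicity of $k\mapsto\#\Sigma_k(V+x)$ (each slice-hitting cylinder has at least one slice-hitting child), which together give
\[
  \#\Bad_{n,c} = \sum_{k=1}^n \#\Bad_{n,c}^k \le K\sum_{k=1}^n \#\Sigma_k(V+x) \le Kn\,\#\Sigma_n(V+x);
\]
thus the bad words inflate the count by at most the subexponential factor $1+Kn$.

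Taking logarithms in the two-sided bound gives $\log\nu([V+x]_{c\lambda^n}) = -n\log2 + \log\#\Sigma_n(V+x) + O(\log n)$, and dividing by $\log(c\lambda^n)=n\log\lambda+O(1)$ yields
\[
  \frac{\log\nu([V+x]_{c\lambda^n})}{\log(c\lambda^n)} = \frac{\log\frac12}{\log\lambda}\Bigl(1 - \frac{\log\#\Sigma_n(V+x)}{n\log2}\Bigr) + o(1).
\]
A short monotonicity argument shows that the intermediate scales $r\in(c\lambda^{n+1},c\lambda^n)$ do not affect the extreme values, so the $\liminf$ and $\limsup$ over $r\downarrow0$ agree with those over $n$. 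By Lemma \ref{thm:minkowski-sigma} (whose proof applies verbatim for every $V\ne V_\infty$), taking the $\limsup$ in $n$ produces $\tfrac{\log\frac12}{\log\lambda}(1-\ldimm(T_\lambda\cap(V+x)))$ and taking the $\liminf$ produces $\tfrac{\log\frac12}{\log\lambda}(1-\udimm(T_\lambda\cap(V+x)))$; the complementation $1-\tfrac{\log\#\Sigma_n}{n\log2}$ is what makes the upper pointwise dimension of $(\proj_{V^\perp})_*\nu$ at $\proj_{V^\perp}(x)$ pair with the \emph{lower} Minkowski dimension of the slice and vice versa, and rearranging gives the two asserted identities. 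The main obstacle is precisely this bad-word control: without Lemma \ref{lemma:few-bad} the tube $[V+x]_{c\lambda^n}$ could a priori capture exponentially many cylinders disjoint from the slice, destroying the exact correspondence between the pointwise dimension of the projected measure and the Minkowski dimension of the slice.
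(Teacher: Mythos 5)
Your proof is correct and follows essentially the same route as the paper's: the identification $(\proj_{V^{\perp}})_*\nu(B(\proj_{V^{\perp}}(x),r))=\nu([V+x]_r)$, the two-sided estimate of $\nu([V+x]_{c\lambda^n})$ by $2^{-n}\cdot\#\Sigma_n(V+x)$ up to the subexponential factor $1+Kn$ coming from Lemma \ref{lemma:few-bad} together with the monotonicity of $k\mapsto\#\Sigma_k(V+x)$, and the passage to the stated identities via Lemma \ref{thm:minkowski-sigma}. Your small refinements --- working in symbolic space where the cylinders $[\iii]$ are genuinely disjoint (the paper instead sums the $\nu$-measures of the sets $\fii_{\iii}(\Xl)$, whose overlaps are $\nu$-null), making the child-count monotonicity explicit, noting that the geometric scales $r=c\lambda^n$ determine the limits over all $r\downarrow 0$, and observing that Lemma \ref{thm:minkowski-sigma} extends verbatim to every $V\in\RP\setminus\{V_\infty\}$ --- are all points the paper uses implicitly.
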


\begin{proof}
Let $x\in \Xl$ and $V\in\RP\setminus\{V_{\infty}\}$, and note that
\begin{equation*}
    (\proj_{V^{\perp}})_*\nu(B(\proj_{V^{\perp}}(x),r))=\nu([V+x]_r)
\end{equation*}
for all $r > 0$. Write $c = \sqrt{2}(K_{\lambda}+M_{\lambda})$. It is easy to see that for every $\iii\in\Sigma_n$ we have
\begin{equation*}
    \diam(\varphi_{\iii}(\Xl))\leq c\lambda^n,
\end{equation*}
so for any $\iii\in\Sigma_n(V+x)$, the cylinder $\varphi_{\iii}(\Xl)$ is contained in $[V+x]_{c\lambda^{n}}$. Since the $\nu$-measure of $\varphi_{\iii}(\Xl)$ is $2^{-n}$, we have
\begin{equation*}
    \nu([V+x]_{c\lambda^{n}}) \geq \sum_{\iii\in\Sigma_n(V+x)} \nu(\varphi_{\iii}(\Xl))\geq 2^{-n}\cdot\#\Sigma_n(V+x).
\end{equation*}
Therefore
\begin{align*}
    \frac{\log\nu([V+x]_{c\lambda^{n}})}{\log c\lambda^n}&\leq\frac{-n\log2}{\log c\lambda^n}+\frac{\log\#\Sigma_n(V+x)}{\log c\lambda^n}\\
    &=\frac{n\log\frac12}{\log c\lambda^n}-\frac{n\log \tfrac12}{\log c\lambda^n}\frac{\log\#\Sigma_{n}(V+x)}{n\log2},
\end{align*}
and taking the limit superior or the limit inferior, Lemma \ref{thm:minkowski-sigma} yields the respective upper bounds.

The lower bound is more subtle and for that we apply Lemma \ref{lemma:few-bad}. Let $c,K>0$ be as in Lemma \ref{lemma:few-bad}. Since the collection $\{\varphi_{\iii}(X)\colon \iii\in\Sigma_n(V+x)\cup\Bad_{n,c}\}$ covers $T_\lambda \cap[V+x]_{c\lambda^n}$, Lemma \ref{lemma:few-bad} shows that
\begin{align*}
    \nu([V+x]_{c\lambda^n})&\leq\sum_{\iii\in\Sigma_n(V+x)}\nu(\varphi_{\iii}(T_\lambda))+\sum_{\iii\in\Bad_{n,c}}\nu(\varphi_{\iii}(T_\lambda))\\
    &=2^{-n}\cdot\#\Sigma_n(V+x)+2^{-n}\cdot\#\Bad_{n,c}\\
    &=2^{-n}\cdot\#\Sigma_n(V+x)+2^{-n}\sum_{k=1}^{n}\#\Bad_{n,c}^k\\
    &\leq2^{-n}\cdot\#\Sigma_n(V+x)+2^{-n}K\sum_{k=1}^{n}\#\Sigma_k(V+x)\\
    &\leq2^{-n}(Kn+1)\cdot\#\Sigma_n(V+x).
\end{align*}
Taking logarithms, dividing by $c\lambda^n$, and taking the limits then gives the desired lower bounds.
\end{proof}

We are now ready to prove Theorem \ref{thm:equiv-marstrand}.

\begin{proof}[Proof of Theorem \ref{thm:equiv-marstrand}]
Let us first assume that
\begin{equation*}
    \udimloc((\proj_{V^{\perp}})_*\nu,\proj_{V^{\perp}}(x))\geq 1
\end{equation*}
for all $x \in T_\lambda$ and $V\in X_F$. Then Proposition \ref{prop:weak-dimension-conservation} and \eqref{eq:takagi-hausdorff-dim} give us
\begin{align*}
    \dimh(\Xl\cap(V+x))&\leq\ldimm(\Xl\cap(V+x))\\
    &=1-\frac{\log\lambda}{\log\frac12}\udimloc((\proj_{V^{\perp}})_*\nu,\proj_{V^{\perp}}(x))\\
    &\leq 1+\frac{\log\lambda}{\log 2}=\dimh(\Xl)-1,
\end{align*}
for all $x\in \Xl$ and $V \in X_F$. Since $T_\lambda$ is a graph of a function and recalling the proof of Theorem \ref{thm:dimasosc}, this estimate extends to all $V \in \RP$. Therefore, by Theorem \ref{prop:takagi-assouad-formula},
\begin{equation*}
    \dimh(\Xl)-1\leq \dima(\Xl)-1=\max_{\atop{x\in T_\lambda}{V\in \RP}}\dimh(T_\lambda \cap (V+x))\leq \dimh(\Xl)-1
\end{equation*}
and the claim follows.

Let us then assume that
\begin{equation} \label{eq:equiv-marstrand1}
    \max_{\atop{x\in T_\lambda}{V\in\RP}}\dimh(T_\lambda \cap (V+x))= \dimh(T_\lambda)-1.
\end{equation}
If $x\in T_\lambda$ and $V\in \RP\setminus\{V_{\infty}\}$ are such that
\begin{equation*}
  \ldimloc((\proj_{V^{\perp}})_*\nu,\proj_{V^{\perp}}(x))<1,
\end{equation*}
then, by Proposition \ref{prop:weak-dimension-conservation}, \eqref{eq:takagi-hausdorff-dim}, \eqref{eq:equiv-marstrand1}, and Theorem \ref{prop:takagi-assouad-formula}, we have
\begin{align*}
    \udimm(\Xl\cap(V+x))&>1-\frac{\log\lambda}{\log\frac12}=1+\frac{\log\lambda}{\log 2}=\dimh(T_\lambda)-1\\ 
    &=\max_{\atop{x\in T_\lambda}{V\in\RP}}\dimh(T_\lambda \cap (V+x))=\max_{\atop{x\in T_\lambda}{V\in\RP}}\dima(T_\lambda \cap (V+x))
\end{align*}
which is a contradiction.
\end{proof}

\subsection{Proof of Lemma \ref{lemma:few-bad}}\label{subsec:crux}
It remains to prove Lemma \ref{lemma:few-bad}. The proof we give is quite technical, but the tools are elementary. The following geometric lemma allows us to simplify the problem. Write $\RP_{\Theta}= \{\projspan{(1,t)}\in\RP\colon |t|\leq \Theta\}$ for all $\Theta>0$ and let $K_\lambda > 0$ be as in Theorem \ref{thm:takagi-assouad}.

\begin{lemma}\label{lemma:pull-back-cylinder}
    For any $\Theta\geq K_{\lambda}$ there is a constant $C=C(\Theta)>0$ such that for every $n\in\N$, $\iii = i_1 \cdots i_n \in \Sigma_n$, $r>0$, $x\in \R^2$, and $V_t\in \RP_{\Theta}$ there exists $y\in\R^2$ such that
    \begin{equation*}
         \varphi_{\iii}^{-1}([V_t+x]_{r})\subset [V_{t_{\iii}}+y]_{C\lambda^{-n}r},
    \end{equation*}
    where $t_{\iii}=\sum_{k=1}^{n}(-1)^{i_k}2^{-k}\lambda^{-k}+(2\lambda)^{-n}t$. Furthermore, we have $V_{t_{\iii}}\in \RP_{\Theta}$.
\end{lemma}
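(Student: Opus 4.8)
The plan is to exploit that $\varphi_{\iii}^{-1}$ is an affine bijection, hence maps the affine line $V_t+x$ onto another affine line. Choosing $y=\varphi_{\iii}^{-1}(x)$ places $y$ on the image line, so it suffices to identify the slope of that image line and to control how much the $r$-neighborhood is stretched. The argument therefore splits into two essentially independent parts: a computation of $t_{\iii}$ together with the check $V_{t_{\iii}}\in\RP_{\Theta}$, and a bound on the fattening factor.

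For the slope, note that by \eqref{eq:takagi-matrix} the matrix $A_{\iii}$ is lower triangular with diagonal entries $2^{-n}$ and $\lambda^{n}$ and vanishing upper-right entry, so $A_{\iii}^{-1}$ is lower triangular with diagonal $2^{n}$ and $\lambda^{-n}$. Writing $b_{\iii}$ for its bottom-left entry, we get $A_{\iii}^{-1}(1,t)=(2^{n},\,b_{\iii}+\lambda^{-n}t)$, and dividing by the first coordinate shows that $\varphi_{\iii}^{-1}(V_t+x)=V_{t_{\iii}}+y$ with slope $t_{\iii}=2^{-n}b_{\iii}+(2\lambda)^{-n}t$. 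Reading $b_{\iii}$ off from \eqref{eq:takagi-matrix} produces exactly the sum defining $t_{\iii}$; the precise labelling of the digits of $\iii$ is immaterial for what follows, since every estimate below uses only $|(-1)^{i_k}|=1$.

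To verify $V_{t_{\iii}}\in\RP_{\Theta}$ I would bound the finite sum not by $K_{\lambda}$ but by the \emph{exact} partial geometric sum. Using $K_{\lambda}=\sum_{k=1}^{\infty}(2\lambda)^{-k}=(2\lambda-1)^{-1}$ and $\bigl|\sum_{k=1}^{n}(-1)^{i_k}(2\lambda)^{-k}\bigr|\le\sum_{k=1}^{n}(2\lambda)^{-k}=K_{\lambda}(1-(2\lambda)^{-n})$, together with $(2\lambda)^{-n}|t|\le(2\lambda)^{-n}\Theta$, gives
\begin{equation*}
  |t_{\iii}|\le K_{\lambda}\bigl(1-(2\lambda)^{-n}\bigr)+(2\lambda)^{-n}\Theta=K_{\lambda}+(2\lambda)^{-n}(\Theta-K_{\lambda})\le\Theta,
\end{equation*}
where the last inequality uses $\Theta\ge K_{\lambda}$ and $(2\lambda)^{-n}\le1$. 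This is the one place where the hypothesis $\Theta\ge K_{\lambda}$ is genuinely used: a crude triangle-inequality bound of $K_{\lambda}+\Theta$ would not close, and the surplus $(2\lambda)^{-n}\Theta$ coming from the tail term is absorbed precisely because the partial sum falls short of $K_{\lambda}$ by the same factor $(2\lambda)^{-n}$.

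For the fattening, recall that for an affine map with linear part $M$ and a line $L$ in the direction of a unit vector $v$, the perpendicular distance transforms exactly by $\dist(\varphi_{\iii}^{-1}p,\varphi_{\iii}^{-1}L)=\frac{|\det M|}{\|Mv\|}\dist(p,L)$, since $Ma\wedge Mb=\det(M)(a\wedge b)$ in the plane. Applying this with $M=A_{\iii}^{-1}$ and $v_t=(1,t)/\sqrt{1+t^2}$ yields $\varphi_{\iii}^{-1}([V_t+x]_r)\subset[V_{t_{\iii}}+y]_{\rho r}$ with $\rho=|\det A_{\iii}^{-1}|/\|A_{\iii}^{-1}v_t\|$. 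Here $|\det A_{\iii}^{-1}|=2^{n}\lambda^{-n}$, and because the first coordinate of $A_{\iii}^{-1}(1,t)$ equals $2^{n}$ we have $\|A_{\iii}^{-1}v_t\|\ge 2^{n}/\sqrt{1+t^2}$; hence $\rho\le\lambda^{-n}\sqrt{1+t^2}\le\sqrt{1+\Theta^2}\,\lambda^{-n}$ and one may take $C(\Theta)=\sqrt{1+\Theta^2}$. The subtle point, and what makes the fattening bound non-trivial, is that $\varphi_{\iii}^{-1}$ expands by $\|A_{\iii}^{-1}\|\approx 2^{n}$, so a priori one would only expect a fattening of order $2^{n}r$; the improvement to $\lambda^{-n}r$ reflects that the lines $V_t$ with $|t|\le\Theta$ are essentially aligned with the strongly expanding direction of $A_{\iii}^{-1}$, which the perpendicular-distance identity captures through the lower bound $\|A_{\iii}^{-1}v_t\|\ge 2^{n}$. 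Thus the two non-routine ingredients are this alignment observation and the exact geometric-sum estimate for $|t_{\iii}|$; everything else is bookkeeping with \eqref{eq:takagi-matrix}.
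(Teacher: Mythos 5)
Your proof is correct and follows essentially the same route as the paper: read the slope of the image line off \eqref{eq:takagi-matrix}, show the neighbourhood is fattened by exactly $\tfrac{\sqrt{1+t^2}}{\sqrt{1+t_{\iii}^2}}\lambda^{-n}\le\sqrt{\Theta^2+1}\,\lambda^{-n}$ (your wedge-product identity simply makes explicit the paper's ``simple geometric argument'' that $\varphi_{\iii}^{-1}$ scales vertical distances by $\lambda^{-n}$, and yields the identical constant $C=\sqrt{\Theta^2+1}$), and bound $|t_{\iii}|$ via the partial geometric sum. The only small difference is that your single estimate $|t_{\iii}|\le K_\lambda(1-(2\lambda)^{-n})+(2\lambda)^{-n}\Theta\le\Theta$ treats all $t$ uniformly, whereas the paper splits into the cases $|t|\le K_\lambda$ (invoking Lemmas \ref{lemma:rossi} and \ref{lemma:furstenberg-is-interval} on Furstenberg directions) and $K_\lambda<|t|\le\Theta$ (the same partial-sum computation); your unified version is marginally cleaner and equally valid, and your remark that the exact digit labelling in $t_{\iii}$ is immaterial is a fair way to handle the harmless index-reversal that arises when applying \eqref{eq:takagi-matrix} to $A_{\iii}^{-1}$.
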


\begin{proof}
It follows from (\ref{eq:takagi-matrix}), that $\varphi_{\iii}^{-1}$ maps $V_t+x$ to a line $V_{t_{\iii}}+y$ with slope $t_{\iii}=\sum_{k=1}^{n}(-1)^{i_k}2^{-k}\lambda^{-k}+(2\lambda)^{-n}t$, and since $\varphi_{\iii}^{-1}$ expands vertical distances by $\lambda^{-n}$, a simple geometric argument shows that by taking $C=\sqrt{\Theta^2+1}$, we have
\begin{equation*}
    \varphi_{\iii}^{-1}([V_t+x]_{r}) = [V_{t_{\iii}}+y]_{\frac{\sqrt{t^2+1}}{\sqrt{t_{\iii}^2+1}}\lambda^{-n}r}\subset [V_{t_{\iii}}+y]_{C\lambda^{-n}r}.
\end{equation*}
If $|t|\leq K_{\lambda}$, then $V_{t_{\iii}}\in X_F$ by Lemma \ref{lemma:rossi} and therefore, $|t_{\iii}|\leq K_{\lambda}\leq \Theta$ by Lemma \ref{lemma:furstenberg-is-interval}. Furthermore, if $\frac{1}{2\lambda-1}=K_{\lambda}<|t|\le\Theta$, then
\begin{align*}
    |t_{\iii}|&\leq \sum_{k=1}^n(2\lambda)^{-k}+(2\lambda)^{-n}|t|=\frac{1-(2\lambda)^{-n}}{2\lambda-1}+(2\lambda)^{-n}|t|\\
    &=(1-(2\lambda)^{-n})K_{\lambda}+(2\lambda)^{-n}|t|\leq |t|\leq \Theta.
\end{align*}
Therefore, $V_{t_{\iii}}\in \RP_{\Theta}$.
\end{proof}

Fix $x\in \Xl$ and $V\in\RP\setminus\{V_{\infty}\}$. Note that $V\in \RP_{\Theta}$ for some $\Theta\geq K_{\lambda}$. Define
\begin{equation}\label{eq:bad-children}
    \Bad^k_{n,c}(\iii)=\{\iii\jjj\in\Sigma_n\colon\jjj\in\Sigma_{n-k} \text{ and } \varphi_{\iii\jjj}(\Xl)\cap [V+x]_{c\lambda^n}\ne\emptyset\}
\end{equation}
for all $\iii\in\Sigma_k$ and $k \le n$. To prove Lemma \ref{lemma:few-bad} it suffices to show that there are constants $c,K>0$ such that $\#\Bad^k_{n,c}(\iii)\leq K$ for all $k \le n$, {since in this case, we have
\begin{equation*}
   \#\Bad_{n,c}^k=\sum_{\iii\in\Bad_{k,c}^k}\#\Bad^k_{n,c}(\iii)\leq K\#\Bad^k_{k,c}\leq K\cdot\#\Sigma_{k-1}(V+x).
\end{equation*}}
Moreover, by (\ref{eq:bad-children}) and Lemma \ref{lemma:pull-back-cylinder}, we have
\begin{equation*}
    \Bad^k_{n,c}(\iii)\subset\{\iii\jjj\in\Sigma_n\colon \jjj\in \Sigma_{n-k}([V_{t_{\iii}}+y]_{Cc\lambda^{n-k}})\},
\end{equation*}
so in particular, $\#\Bad^k_{n,c}(\iii)\leq\#\Sigma_{n-k}([V_{t_{\iii}}+y]_{Cc\lambda^{n-k}})$, where $V_{t_{\iii}}\in \RP_{\Theta}$. Note also that if $\iii\in\Bad_{k,c}^k$, then we have $\Xl\cap (V_{t_{\iii}}+y)=\emptyset$. Therefore, the following lemma implies Lemma \ref{lemma:few-bad}.

\begin{lemma}\label{lemma:cylinder-bound}
    For any $\Theta\geq K_{\lambda}$ there are constants $C=C(\Theta)>0$ and $K=K(\Theta)\in\N$ such that for every $V\in \RP_{\Theta}$ and $x\in\R^2$ satisfying $\Xl\cap (V+x)=\emptyset$, we have
    \begin{equation*}
        \#\Sigma_n([V+x]_{C\lambda^n})\leq K
    \end{equation*}
    for all $n\in\N$.
\end{lemma}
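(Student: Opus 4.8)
The plan is to recast the geometric count as a statement about sublevel sets of a single function. Since $V\neq V_\infty$, the line $V+x$ is the graph of an affine map $\ell$ with slope $t$, $|t|\le\Theta$; put $g=T_\lambda-\ell$. Because $\varphi_{\iii}(T_\lambda)$ is precisely the graph of $T_\lambda$ over the dyadic interval $I_{\iii}$ of length $2^{-|\iii|}$, and the distance from $(s,T_\lambda(s))$ to $V+x$ is $g(s)/\sqrt{t^2+1}$, I would first show
\[
  \iii\in\Sigma_n([V+x]_{C\lambda^n})\iff\min_{I_{\iii}}g\le C\sqrt{t^2+1}\,\lambda^n\le C\sqrt{\Theta^2+1}\,\lambda^n=:C'\lambda^n .
\]
As $\Xl\cap(V+x)=\emptyset$ and $[0,1]$ is connected, the continuous function $g$ has constant sign; after possibly reflecting I assume $g>0$ and set $\delta=\min_{[0,1]}g>0$. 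The task becomes: bound the number of level-$n$ dyadic cells on which $g$ dips below $C'\lambda^n$.

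Next I would set up a renormalization. Using $T_\lambda(\tfrac{x}{2})=\tfrac{x}{2}+\lambda T_\lambda(x)$ and $T_\lambda(\tfrac{x}{2}+\tfrac12)=\tfrac12-\tfrac{x}{2}+\lambda T_\lambda(x)$, one checks that the family $\mathcal{G}=\{T_\lambda+\alpha v+\beta\colon|\alpha|\le B,\ \beta\in\R\}$ with $B=K_\lambda+\Theta$ is invariant under the two substitution maps $R_1,R_2$ sending a function to the vertically rescaled restriction to its left, resp.\ right, half; moreover $g\in\mathcal{G}$. Writing $\psi_{\iii}\colon[0,1]\to I_{\iii}$ for the orientation-preserving affine parametrization, the renormalized piece $g_{\iii}=\lambda^{-n}g\circ\psi_{\iii}$ lies in $\mathcal{G}$, so the counting condition reads $\mu_{\iii}:=\min_{[0,1]}g_{\iii}\le C'$. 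A one-line computation then yields the crucial min-recursion $\mu_{\iii}=\lambda\min(\mu_{\iii1},\mu_{\iii2})$. This forces $\mu$ to increase strictly, by a factor $\ge 1/\lambda>1$, along every descending branch, so the counter set $\mathcal{T}=\{\iii\colon\mu_{\iii}\le C'\}$ is a finite subtree in which the parent of a counter is a counter, and $\delta\lambda^{-n}\le\mu_{\iii}\le C'$. In a binary tree the width at any level is at most $1$ plus the number of \emph{branching} nodes (counters both of whose children are counters), so it suffices to bound the total number of branchings by a constant $K=K(\Theta)$, independently of $\delta$.

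The heart of the matter is that branching is confined. A branching node $\jjj$ requires both halves of $g_{\jjj}$ to descend to within $\lambda C'$ of the line, i.e.\ $m_L(\alpha_{\jjj})+\beta_{\jjj}\le\lambda C'$ and $m_R(\alpha_{\jjj})+\beta_{\jjj}\le\lambda C'$, where $m_L(\alpha)=\min_{[0,1/2]}(T_\lambda+\alpha v)$ and $m_R(\alpha)=\min_{[1/2,1]}(T_\lambda+\alpha v)$; in particular $|m_L(\alpha_{\jjj})-m_R(\alpha_{\jjj})|\le\lambda C'$. Here I would exploit two elementary facts: $\min_{[0,1]}(T_\lambda+\alpha v)=0$ for $\alpha\ge0$ (as $T_\lambda\ge0$ with equality only at $0,1$) and the symmetry $T_\lambda(1-v)=T_\lambda(v)$. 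These let one compute $m_L,m_R$ explicitly and check that $|m_L-m_R|$ stays bounded away from zero except on a thin window around a finite set of \emph{critical} slopes, so for $C=C(\Theta)$ small enough a branching can occur only when $\alpha_{\jjj}$ lies in that window. Since the slope dynamics $\alpha\mapsto\frac{1+\alpha}{2\lambda}$ and $\alpha\mapsto\frac{\alpha-1}{2\lambda}$ are contractions with fixed points $\pm K_\lambda$ that push $\alpha$ out of the window after one step, a node cannot branch again for a definite number of generations, during which $\mu$ is multiplied by a definite power of $1/\lambda$; with $\delta\lambda^{-n}\le\mu_{\iii}\le C'$ this caps the total number of branchings.

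I expect the last step to be the main obstacle: the reductions and the min-recursion are routine, but converting ``branching is confined to near-critical slopes'' into a genuinely uniform bound requires controlling how often the contracting slope orbit re-enters the thin critical window while the expanding coordinate $\mu$ sweeps through $[\delta,C']$. The numerical coincidence powering the whole scheme is $2^{n}\lambda^{n/c}=1$ for $c=\log_2(1/\lambda)$, which is exactly what makes the count scale-independent; it is already visible in the horizontal case $\ell\equiv\mathrm{const}$, where $\{T_\lambda\le C'\lambda^n\}$ consists of two intervals of length $\asymp(C'\lambda^n)^{1/c}$ near $0$ and $1$ that meet only $\asymp (C')^{1/c}$ dyadic cells of size $2^{-n}$.
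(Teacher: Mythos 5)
Your reductions are sound as far as they go: the equivalence between counters and sublevel sets of $g$, the invariance of the class $T_\lambda+\alpha v+\beta$ under the two renormalizations, the recursion $\mu_{\iii}=\lambda\min(\mu_{\iii1},\mu_{\iii2})$, the subtree structure, and the observation that a branching forces $|m_L(\alpha_{\jjj})-m_R(\alpha_{\jjj})|\le\lambda C'$ (indeed, for $g=T_\lambda-\ell>0$ one can check $|m_L(\alpha)-m_R(\alpha)|\ge|\alpha|/2$, so the critical window is genuinely thin); your slope maps iterate to exactly the formula $t_{\iii}=\sum_{k}(-1)^{i_k}(2\lambda)^{-k}+(2\lambda)^{-n}t$ of the paper's Lemma~\ref{lemma:pull-back-cylinder}. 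But the final step, which you yourself flag, is a genuine gap, for three concrete reasons. First, the cap you propose is not uniform: the inequality $\delta\lambda^{-n}\le\mu_{\iii}\le C'$ only limits the number of levels carrying counters to about $\log_{1/\lambda}(C'/\delta)$, and $\delta=\min_{[0,1]}g$ degenerates as the avoiding line approaches the graph; any bound of the form (depth)/(gap between branchings) therefore depends on $\delta$ and cannot yield $K=K(\Theta)$. Second, confinement by slope alone is impossible: $f_1(\alpha)=\frac{1+\alpha}{2\lambda}$ and $f_2(\alpha)=\frac{\alpha-1}{2\lambda}$ form a contracting IFS whose attractor is the whole interval $[-K_\lambda,K_\lambda]$ (the two images overlap because $K_\lambda>1$), which contains the critical slope $0$; so for \emph{every} choice of $C'>0$ slope orbits re-enter the window — already $f_1(f_2(0))=\frac{2\lambda-1}{4\lambda^2}$, which tends to $0$ as $\lambda\downarrow\tfrac12$ — and "pushed out after one step" cannot be upgraded to "never returns". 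Third, the opening reduction "after possibly reflecting I assume $g>0$" is unavailable: no symmetry of the plane preserving the graph of $T_\lambda$ swaps the half-planes above and below it, and your explicit inputs ($\min_{[0,1]}(T_\lambda+\alpha v)=0$ for $\alpha\ge0$) treat only the line-below case; the line-above case, where the class is $-T_\lambda+\alpha v+\beta$ and the degeneracy at $\alpha=0$ is governed by the two maximizers $\tfrac13,\tfrac23$, is precisely the hard one.

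What is missing is per-level \emph{positional} information, and this is how the paper closes the argument: it never bounds the total number of branchings (which is stronger than needed — the lemma only requires bounded width, and a caterpillar-shaped counter tree with $\sim\log(1/\delta)$ branchings but width $2$ would be perfectly compatible with your recursion). Instead, for horizontal lines (Lemma~\ref{lemma:crux}) it shows by induction that at each level at most one word of $\Sigma^1_{2n}$, namely $\iii_{\max}|_{2n}$ — the coding of the unique maximizer $\tfrac13$ (Lemma~\ref{lemma:mishura-schied}) — can meet the $C\lambda^{2n}$-neighborhood, the adjacent cylinders $\iii^L_n,\iii^R_n,\iii^{RR}_n$ being excluded by the quantitative vertical-distance estimate of Lemma~\ref{lemma:vertical-distance} (which rests on the cone condition of Lemma~\ref{lemma:empty-cone}); for general slopes it performs a single pull-back along $\varphi_{\overline{1}|_k}$, with $k$ chosen from thresholds $\delta_k$ built from your slope dynamics, to land the slope in $[-(1-\varepsilon),1-\varepsilon]$, and then transfers the horizontal bound via the cone comparisons of Lemmas~\ref{lemma:geometric1} and~\ref{lemma:geometric2}. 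If you want to salvage your tree formalism, the statement to aim for is not a uniform cap on branchings but that the level-$n$ counters lie within a bounded set of cylinders around the codings of the closest-approach points of the line — i.e., an analogue of Lemma~\ref{lemma:vertical-distance} — and that is where the real geometric work of the proof lives.
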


The remainder of the paper is dedicated to the proof of Lemma \ref{lemma:cylinder-bound}. We first present six geometric lemmas which further clarify the situation and then conclude the proof at the end of the section. Let us recall the following result of Mishura and Schied \cite{MishuraSchied2019}.

\begin{lemma}\label{lemma:mishura-schied}
    For $\tfrac12<\lambda<1$, the Takagi function $T_{\lambda}$ has exactly two maximizers, $x_{\max}=\frac{1}{3}$ and $y_{\max}=\frac{2}{3}$, and its maximum value is $M_{\lambda}=\frac{1}{3(1-\lambda)}$. Furthermore, $\iii_{\max}=\overline{12}$ and $\jjj_{\max}=\overline{21}$ are the only infinite words with $\pi(\iii_{\max}) = (x_{\max},M_\lambda)$ and $\pi(\jjj_{\max}) = (y_{\max},M_\lambda)$.
\end{lemma}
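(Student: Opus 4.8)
The plan is to reduce the statement about the graph to a one–variable optimization over the orbit of a tent map, to establish the sharp bound together with uniqueness by a pairing argument that crucially uses $\lambda>\tfrac12$, and finally to read off the coding words from the binary expansions of $\tfrac13$ and $\tfrac23$.

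First I would record the basic reduction. Writing $a_n=a_n(x)=\dist(2^nx,\Z)\in[0,\tfrac12]$, the definition \eqref{eq:takagi-def} gives $T_\lambda(x)=\sum_{n\ge0}\lambda^na_n$. A short case analysis on the doubling map shows that $a_{n+1}=\Lambda(a_n)$, where $\Lambda(a)=\min\{2a,1-2a\}$ is the tent map of $[0,\tfrac12]$ with peak at $a=\tfrac14$; in particular the whole sequence $(a_n)_{n\ge0}$ is determined by $a_0=\dist(x,\Z)=\min\{x,1-x\}$. The value $a=\tfrac13$ is the nontrivial fixed point of $\Lambda$ (indeed $\tfrac13>\tfrac14$ and $1-\tfrac23=\tfrac13$), so for $x\in\{\tfrac13,\tfrac23\}$ every $a_n=\tfrac13$ and $T_\lambda(x)=\sum_n\lambda^n\tfrac13=\tfrac{1}{3(1-\lambda)}=M_\lambda$.

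The heart of the argument is to show $T_\lambda(x)\le M_\lambda$ with equality only when $a_0=\tfrac13$. I would set $u_n=\tfrac13-a_n$ and study $G(x)\coloneqq M_\lambda-T_\lambda(x)=\sum_{n\ge0}\lambda^nu_n$ (an absolutely convergent series), aiming to prove $G\ge0$. A term is negative exactly when $a_n>\tfrac13$; since then $a_n>\tfrac14$ we are on the branch $a_{n+1}=1-2a_n$, which gives $u_{n+1}=-2u_n>0$. Hence the negative terms are isolated, each being both preceded and followed by a nonnegative term, so the pairs below are disjoint. Pairing each negative index $n$ with $n+1$ yields the combined contribution $\lambda^nu_n+\lambda^{n+1}u_{n+1}=\lambda^n(1-2\lambda)u_n$, which is strictly positive because $u_n<0$ and $1-2\lambda<0$; all indices not used in a pair satisfy $u_m\ge0$. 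Summing, $G(x)\ge0$, and equality forces every $u_n=0$, i.e.\ $a_0=\tfrac13$. Thus $M_\lambda=\tfrac1{3(1-\lambda)}$ and the maximizers are precisely the $x$ with $\min\{x,1-x\}=\tfrac13$, namely $x_{\max}=\tfrac13$ and $y_{\max}=\tfrac23$. It remains to identify the coding words: since $\varphi_1$ and $\varphi_2$ map the graph into the vertical strips over $[0,\tfrac12]$ and $[\tfrac12,1]$, the $x$-coordinate of $\pi(\iii)$ is $\sum_{k\ge1}(i_k-1)2^{-k}$, and as the second coordinate is automatically $T_\lambda$ of the first, $\pi(\iii)=(x_{\max},M_\lambda)$ holds iff this number equals $\tfrac13$. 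As $\tfrac13$ and $\tfrac23$ are not dyadic they have unique binary expansions, so there is a unique such word in each case, and a geometric summation gives $\overline{12}\mapsto\tfrac13$ and $\overline{21}\mapsto\tfrac23$.

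The only delicate point is the sharp upper bound together with uniqueness: the naive estimate $T_\lambda\le\tfrac1{2(1-\lambda)}$, and even its finite-step refinements obtained by iterating the functional equation, fail to be tight when $\lambda>\tfrac12$. The pairing above is exactly what extracts the correct constant, and it is here that the hypothesis $\lambda>\tfrac12$ enters, through the sign of $1-2\lambda$. At $\lambda=\tfrac12$ the paired contribution vanishes and uniqueness genuinely breaks down, so this feature of the proof is essential rather than cosmetic.
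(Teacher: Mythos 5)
Your proposal is correct, and for the first half it takes a genuinely different route: the paper does not prove the maximum/maximizer statement at all, but simply cites Mishura and Schied \cite{MishuraSchied2019}, adding only the observation that the unique codings follow because the IFS projects to the dyadic IFS on the $x$-axis and $\tfrac13,\tfrac23$ are non-dyadic, hence have unique binary expansions --- this last step is essentially identical to yours. Your replacement for the citation is a clean self-contained argument: conjugating the digit sequence $a_n=\dist(2^nx,\Z)$ to the tent map $\Lambda(a)=\min\{2a,1-2a\}$ on $[0,\tfrac12]$ is a correct and standard computation, $\tfrac13$ is indeed the nontrivial fixed point, and your pairing of each negative term $u_n=\tfrac13-a_n<0$ with its successor (forced onto the branch $u_{n+1}=-2u_n>0$ since $a_n>\tfrac13>\tfrac14$) yields the block contribution $\lambda^n(1-2\lambda)u_n>0$; since distinct negative indices differ by at least $2$, the blocks are disjoint and consecutive, so the regrouping is legitimate (absolute convergence gives this in any case), and equality forces all $u_n=0$, hence $a_0=\tfrac13$ and $x\in\{\tfrac13,\tfrac23\}$. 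What each approach buys: the paper's citation is shorter and defers to a reference that proves more (Mishura--Schied also handle signed Takagi--Landsberg functions), while your argument is elementary, keeps the paper self-contained, and isolates exactly where the hypothesis $\lambda>\tfrac12$ enters --- the sign of $1-2\lambda$ --- correctly explaining why uniqueness degenerates at $\lambda=\tfrac12$, where the maximizer set is known to be uncountable. I see no gap; this would serve as a valid substitute for the external reference in the proof of Lemma \ref{lemma:mishura-schied}.
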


\begin{proof}
    The first part of the claim is proved in \cite{MishuraSchied2019}. The fact that each maximum in $T_\lambda$ has a unique coding follows from the fact that the projection of the IFS $\{\varphi_1,\varphi_2\}$ onto the $x$-axis is the IFS which generates the dyadic intervals, and $\frac{1}{3}$ and $\frac{2}{3}$ have unique dyadic codings $\iii_{\max}=\overline{12}$ and $\jjj_{\max}=\overline{21}$, respectively.
\end{proof}

In the following, we rely heavily on the mirror symmetry of $\Xl$. By Lemma \ref{lemma:mishura-schied}, the Takagi function restricted to $[0,\tfrac12]$, $\varphi_{1}(\Xl)$, has a unique maximum at $x_{\max}$. We denote the point on the graph of $T_\lambda$ corresponding to this maximum by $\overline{x}_{\max}$, that is $\overline{x}_{\max}=\pi(\iii_{\max}) = (x_{\max},M_\lambda)$.  We write $\iii_1=\overline{1}$ and $\iii_2=\overline{2}$. Let $\Sigma^1_n=\{\iii=i_1\cdots i_n\in\Sigma_n\colon i_1=1\}$ and
\begin{equation*}
  \Sigma_n^1(A)=\{\iii\in\Sigma_n^1\colon \varphi_{\iii}(\Xl)\cap A\ne\emptyset\}
\end{equation*}
for all $A \subset \R^2$ and $n \in \N$. For each $x\in \R^2$, $V\in \RP\setminus\{V_{\infty}\}$, $v\in V$ with $|v|=1$, and $\delta>0$ we define
\begin{align*}
    &C(x,v,\delta)=\{y\in\R^2\colon (y-x)\cdot v < (1+\delta^2)^{-1/2}|y-x|\},\\
    &H^+(x,V)=\{y\in\R^2\colon v^{\perp}\cdot (y-x)>0\},\\
    &H^-(x,V)=\{y\in\R^2\colon v^{\perp}\cdot (y-x)<0\},
\end{align*}
where $v^{\perp}$ is the unique vector with positive second coordinate orthogonal to $v$. Further, we let $H^-(x,V_\infty)$ and $H^+(x,V_\infty)$ denote the left open half-plane and the right open half-plane centered at $x$, respectively. Finally, let $C^-(x,\delta)=C(x,(-1,0),\delta)$ and $C^+(x,\delta)=C(x,(1,0),\delta)$. 

\begin{lemma}\label{lemma:empty-cone}
    If $x=(x_{\max},y)$, where $y>M_{\lambda}$, then $\Xl\cap C^-(x,1)=\emptyset$.
\end{lemma}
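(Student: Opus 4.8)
The plan is to unwind the definition of the cone, reduce the claim to a one–dimensional estimate on the graph, and then prove that estimate directly from the functional equation of $T_\lambda$. Write $x=(x_{\max},y)=(\tfrac13,y)$ with $y>M_\lambda$. The cone $C^-(x,1)=C(x,(-1,0),1)$ is the open cone with apex $x$, axis $(-1,0)$, and half-opening angle $\arctan 1=\tfrac{\pi}{4}$; unwinding, a point $w$ lies in $C^-(x,1)$ exactly when $x_1-w_1>\tfrac{1}{\sqrt2}|w-x|$. Every point $w=(t,T_\lambda(t))$ of the graph satisfies $w_2=T_\lambda(t)\le M_\lambda<y$, so it lies strictly below $x$; in particular $w\ne x$ and $|w-x|>0$. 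I would split the argument according to whether $t\ge x_{\max}$ or $t<x_{\max}$.

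For $t\ge\tfrac13$ the inclusion is immediate: $x_1-w_1=\tfrac13-t\le 0<\tfrac{1}{\sqrt2}|w-x|$, so $w\notin C^-(x,1)$. For $t<\tfrac13$ both $\tfrac13-t>0$ and $y-T_\lambda(t)>0$, so squaring the membership inequality $\tfrac13-t>\tfrac{1}{\sqrt2}|w-x|$ and using $|w-x|^2=(\tfrac13-t)^2+(y-T_\lambda(t))^2$ reduces $w\in C^-(x,1)$ to the equivalent condition $\tfrac13-t>y-T_\lambda(t)$, that is $T_\lambda(t)-t>y-\tfrac13$. Hence it suffices to prove the reverse inequality
\begin{equation*}
  T_\lambda(t)-t\le y-\tfrac13\qquad\text{for all }t\in[0,\tfrac13),
\end{equation*}
which geometrically says that the graph over $[0,\tfrac13]$ lies weakly below the slope–one line through $x$ (the lower edge of the cone).

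To establish this, note first the algebraic identity $M_\lambda-\tfrac13=\lambda M_\lambda$, which follows at once from $M_\lambda=\tfrac{1}{3(1-\lambda)}$. Since $y>M_\lambda$ we have $y-\tfrac13>M_\lambda-\tfrac13=\lambda M_\lambda$, so it is enough to show $T_\lambda(t)-t\le\lambda M_\lambda$ on $[0,\tfrac12]$. Here I would invoke the functional equation recorded in the paper: setting $x=2t$ in $T_\lambda(\tfrac{x}{2})=\tfrac{x}{2}+\lambda T_\lambda(x)$ gives $T_\lambda(t)=t+\lambda T_\lambda(2t)$ whenever $t\in[0,\tfrac12]$, whence $T_\lambda(t)-t=\lambda T_\lambda(2t)\le\lambda M_\lambda$ because $T_\lambda\le M_\lambda$ everywhere (Lemma \ref{lemma:mishura-schied}). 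Combining, $T_\lambda(t)-t\le\lambda M_\lambda<y-\tfrac13$, so $w\notin C^-(x,1)$, finishing the proof. The only genuine content is the reduction to the slope–one bound together with the one-line functional-equation estimate; everything else is routine trigonometry. I expect the main subtlety to be purely bookkeeping: correctly translating the cone membership into the inequality $T_\lambda(t)-t>y-\tfrac13$ and keeping track of strictness. The strict hypothesis $y>M_\lambda$ is used exactly to upgrade $T_\lambda(t)-t\le\lambda M_\lambda$ to a strict exclusion from the open cone; indeed the graph touches the slope–one line at $t=\tfrac16$ and $t=\tfrac13$, which is what forces both the aperture $\delta=1$ and the strict inequality $y>M_\lambda$ to be sharp.
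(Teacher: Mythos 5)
Your proof is correct and takes essentially the same route as the paper: the paper's one\hyp{}line argument, namely that $\Xl\subset\varphi_1(\R\times(-\infty,M_\lambda])=\{(u,v)\colon v\le u+\lambda M_\lambda\}$ with the slope\hyp{}one line passing through $\overline{x}_{\max}$, is exactly your functional\hyp{}equation estimate $T_\lambda(t)-t=\lambda T_\lambda(2t)\le\lambda M_\lambda$ (together with the trivial case $t\ge\tfrac13$) phrased as a one\hyp{}step self\hyp{}affinity statement, and both proofs then conclude because the open cone lies strictly above that line. A minor remark: you correctly read the cone as $\{w\colon (w-x)\cdot v>(1+\delta^2)^{-1/2}|w-x|\}$, i.e.\ with the inequality reversed relative to the paper's printed definition of $C(x,v,\delta)$, which must be a typo there (with the printed inequality $C^-(x,1)$ would contain, say, $(1,0)\in\Xl$ and the lemma would be false), so your silent correction is the right reading.
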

    
\begin{proof}
Let $A=\R\times (-\infty,M_{\lambda}]$ and note that
\begin{equation*}
    \Xl\subset\varphi_{1}(A)=\{(x,y)\in\R^2\colon x\in\R \text{ and } y \leq x+\lambda M_{\lambda}\}.
\end{equation*}
In particular, the graph of the Takagi function lies below the line satisfying the equation $y=x+\lambda M_{\lambda}$. Note that the point $\overline{x}_{\max}$ is on this line, so $\varphi_1(T_{\lambda})$ lies below the line $V_1+\overline{x}_{\max}$, and the claim follows.
\end{proof}

\begin{figure}[h!]
    \captionsetup{justification=justified}
    \centering
    \begin{tikzpicture}[scale=0.9]
    \filldraw[color=black!0, fill=gray!20] (0,3) rectangle (-6,6);
    \draw (-6,0) -- node[anchor=north, pos=0.7] {$r/t$} (6,0) node[anchor=north east] {$V_0+x$};
    \filldraw (0,0) circle (1.5pt) node[anchor=south east] {$x$} -- (0,6);
    \draw[dashed] (-6,3) -- (6,3);
    \draw (-3,-1.5) -- (6,3) node[anchor=south east, pos=0.7] {$V_t+x$};
    \draw[dashed] (-6,-1) -- (6,5);

    \draw (6,0) -- node[anchor=west] {$r$} (6,3);
    \draw (0,2) -- node[anchor=west] {$Cr$} (0.8,0.4);
    \node[anchor=east] at (0,1) {$h$};

    \coordinate (A) at (0,0);
    \coordinate (B) at (0,2);
    \coordinate (C) at (0.8,0.4);
    \coordinate (D) at (6,0);
    \coordinate (E) at (6,3);
    \draw pic[draw] {angle};
    \draw (D) -- (A) -- (E) pic[draw] {angle = D--A--E};
    \MarkRightAngle[size=10pt](E,D,A);
    \MarkRightAngle[size=7pt](B,C,A);
    \end{tikzpicture}
    \caption{The geometric observation of Lemma \ref{lemma:geometric1}.}
    \label{fig:geometric1}
\end{figure}
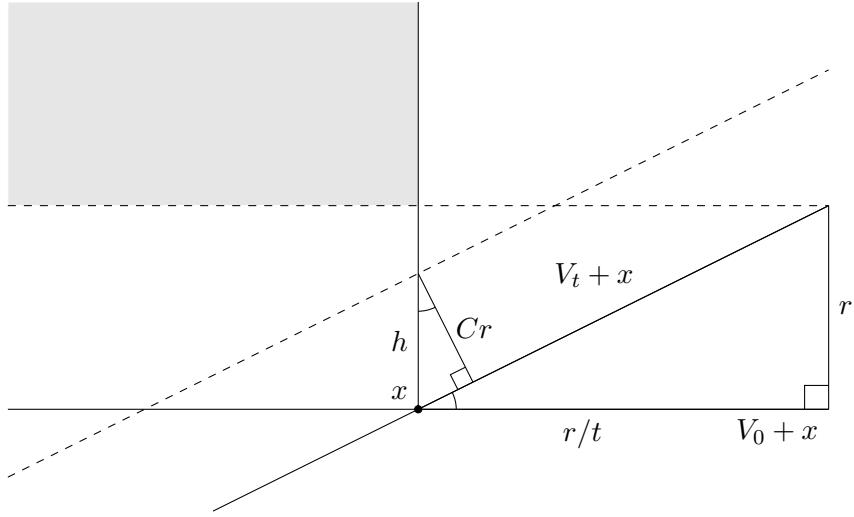

\begin{lemma}\label{lemma:geometric1}
    Let $\Theta>0$, $x\in\R^2$, $V_t\in \RP_{\Theta}$ with $t>0$, and $A\subset H^+(x,V_0)\cap H^-(x,V_{\infty})$. There exists a constant $C=C(\Theta)>0$ such that for any $r>0$ satisfying $A\cap[V_0+x]_r=\emptyset$ we have $A\cap[V_t+x]_{Cr}=\emptyset$.
\end{lemma}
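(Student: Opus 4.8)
The plan is to reduce the set-theoretic conclusion $A \cap [V_t+x]_{Cr} = \emptyset$ to a uniform lower bound on the distance from an arbitrary point $y \in A$ to the line $V_t+x$, exploiting the two sign constraints that the hypothesis $A \subset H^+(x,V_0) \cap H^-(x,V_\infty)$ imposes on the coordinates of $y$ relative to $x$.

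First I would unpack the hypotheses into pointwise inequalities. Fix $y \in A$. Since $A \subset H^-(x,V_\infty)$ is the left half-plane centered at $x$, we have $y_1 - x_1 < 0$. Since $A \subset H^+(x,V_0)$ is the upper half-plane, we have $y_2 - x_2 > 0$; and because $[V_0+x]_r$ is exactly the horizontal strip $\{\,y : |y_2 - x_2| \le r\,\}$, the assumption $A \cap [V_0+x]_r = \emptyset$ upgrades this to $y_2 - x_2 > r$.

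Next I would compute the distance from $y$ to $V_t+x = \{x + s(1,t) : s \in \R\}$. Using the unit normal $n = (-t,1)/\sqrt{1+t^2}$, the signed distance is $(y-x)\cdot n = \bigl((y_2-x_2) - t(y_1-x_1)\bigr)/\sqrt{1+t^2}$. Here is the geometric crux: since $t > 0$ and $y_1 - x_1 < 0$, the term $-t(y_1-x_1)$ is strictly positive, so the signed distance is positive (hence equals the true distance) and discarding that positive term only decreases it. This yields $\dist(y,V_t+x) > (y_2-x_2)/\sqrt{1+t^2} > r/\sqrt{1+t^2}$.

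Finally, to make the constant independent of $t$, I would invoke $V_t \in \RP_{\Theta}$, i.e. $t \le \Theta$, so that $1/\sqrt{1+t^2} \ge 1/\sqrt{1+\Theta^2}$; setting $C = C(\Theta) = (1+\Theta^2)^{-1/2}$ then gives $\dist(y,V_t+x) > Cr$ for every $y \in A$, which is precisely the assertion $A \cap [V_t+x]_{Cr} = \emptyset$. I do not anticipate a serious obstacle: the computation is elementary, and the only point needing care is that $C$ must be chosen uniformly over all admissible $x$, $r$, $A$, and $t$, which the bound $t \le \Theta$ supplies by taking the worst case $t = \Theta$. The substance of the lemma is the sign observation that, in the upper-left quadrant relative to $x$, a positively sloped line can only recede from $A$ compared with the horizontal line, so that avoiding the horizontal $r$-strip forces avoidance of a slanted strip of comparable width.
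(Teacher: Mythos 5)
Your proof is correct and is essentially the paper's argument: the paper derives the same bound via similar triangles in a figure (showing the vertical height $h=\sqrt{t^2+1}\,Cr$ of the slanted strip at the line $V_\infty+x$ is at most $r$), which is exactly your algebraic computation that $\dist(y,V_t+x)\ge (y_2-x_2)/\sqrt{1+t^2}$ once the sign of $-t(y_1-x_1)$ is noted. You even arrive at the identical constant $C=(\Theta^2+1)^{-1/2}$, so there is nothing to correct.
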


\begin{proof}
Let $C= (\sqrt{\Theta^2+1})^{-1}$. Since $A$ is contained in the shaded area in Figure \ref{fig:geometric1}, it suffices to show that $h\leq r$. Since the triangles are similar, we have $h=\sqrt{t^2+1}Cr \leq \sqrt{\Theta^2+1}Cr \leq r$.
\end{proof}

\begin{lemma}\label{lemma:geometric2}
    Let $x\in\R^2$, $\delta>0$, and $A\subset H^-(x,V_0)$ be such that $A\cap C^-(x,\delta)=\emptyset$ and $A\cap [V_0+x]_r=\emptyset$. Then for any $0<\varepsilon<\delta$ there exists a constant $C=C(\varepsilon,\delta)>0$ such that for each $0<t\leq\delta-\varepsilon$ we have $A\cap[V_t+x]_{Cr}=\emptyset$.
\end{lemma}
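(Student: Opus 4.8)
The plan is to normalize by a translation and then reduce the statement to an elementary lower bound on the distance from points of $A$ to the line $V_t+x$. Since every hypothesis and the conclusion are translation invariant, I would first assume $x=\bar{0}$. Then $V_0+x$ is the horizontal axis, $H^-(x,V_0)=\{y\in\R^2 : y_2<0\}$ is the open lower half-plane, and $C^-(x,\delta)$ is the cone of points making angle at most $\arctan\delta$ with the direction $(-1,0)$, that is, $\{y : y_1<0 \text{ and } |y_2|\le\delta|y_1|\}$. With this normalization I would rewrite the three hypotheses as coordinate conditions on each $y=(y_1,y_2)\in A$: the inclusion $A\subset H^-(x,V_0)$ gives $y_2<0$; the disjointness $A\cap[V_0+x]_r=\emptyset$ upgrades this to $y_2<-r$; and $A\cap C^-(x,\delta)=\emptyset$ gives the dichotomy that either $y_1\ge 0$, or else $y_1<0$ and $y_2<\delta y_1$.

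Next I would control the signed vertical gap $g(y)=ty_1-y_2$ between $y$ and the line $V_t$, the point being that the distance from $y$ to $V_t$ equals $g(y)/\sqrt{1+t^2}$ and that I can bound $g(y)>\tfrac{\varepsilon}{\delta}r$ uniformly over $y\in A$ and $0<t\le\delta-\varepsilon$. In the first alternative $y_1\ge 0$ the slope term $ty_1$ is nonnegative, so $g(y)\ge -y_2>r\ge\tfrac{\varepsilon}{\delta}r$, using $y_2<-r$ and $\varepsilon\le\delta$. In the second alternative $y_1<0$ the cone condition $y_2<\delta y_1$ yields $g(y)>(t-\delta)y_1=(\delta-t)|y_1|\ge\varepsilon|y_1|$, while the strip condition $y_2<-r$ yields $g(y)=-t|y_1|-y_2>r-t|y_1|$; taking the larger of these two bounds and minimizing the resulting piecewise linear function of $|y_1|$ over $|y_1|\ge 0$ gives $g(y)>\tfrac{\varepsilon r}{\varepsilon+t}\ge\tfrac{\varepsilon r}{\delta}$, where the final inequality uses $t\le\delta-\varepsilon$. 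Consequently every $y\in A$ lies at distance $g(y)/\sqrt{1+t^2}>\tfrac{\varepsilon}{\delta\sqrt{1+\delta^2}}\,r$ from $V_t+x$, and the lemma follows with $C(\varepsilon,\delta)=\tfrac{\varepsilon}{\delta\sqrt{1+\delta^2}}$, since $\sqrt{1+t^2}\le\sqrt{1+\delta^2}$.

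The only subtle step is the second alternative. There the cone hypothesis and the strip hypothesis each supply merely a one-sided estimate for $g(y)$: the cone bound $\varepsilon|y_1|$ is effective only when $y$ is far from $x$, whereas the strip bound $r-t|y_1|$ is effective only when $y$ is near $x$, and neither is by itself bounded below by a fixed multiple of $r$. The crux is that combining them through the maximum produces the uniform lower bound $\tfrac{\varepsilon r}{\varepsilon+t}$, and that it is exactly the angular separation $\delta-t\ge\varepsilon$ between the slope of $V_t$ and the slope $\delta$ of the lower boundary of the cone that keeps this quantity bounded away from $0$ as $t\uparrow\delta$. Geometrically this says that a set trapped below both the cone and the horizontal strip stays uniformly far from every line $V_t$ whose slope is bounded away from $\delta$, which is precisely the content of the lemma.
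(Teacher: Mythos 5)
Your proof is correct and takes essentially the same route as the paper: you use the same normalization and land on exactly the paper's constant $C(\varepsilon,\delta)=\varepsilon/(\delta\sqrt{1+\delta^2})$, the only difference being that the paper reads the key bound $d\geq r/\delta$ off the similar triangles in Figure \ref{fig:geometric2}, whereas you verify the same estimate pointwise by bounding the vertical gap $g(y)=ty_1-y_2$ and minimizing $\max\{\varepsilon|y_1|,\,r-t|y_1|\}$, with the crossing value $\varepsilon r/(\varepsilon+t)\geq \varepsilon r/\delta$ playing the role of the paper's inequality $\tfrac{\delta-\varepsilon}{t\delta}\geq\tfrac1\delta$. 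You also (correctly) interpreted $C^-(x,\delta)$ as the cone of half-angle $\arctan\delta$ about the direction $(-1,0)$, which is the intended meaning consistent with Lemma \ref{lemma:empty-cone}, even though the inequality in the paper's displayed definition of $C(x,v,\delta)$ is written in the reversed direction.
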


\begin{proof}
Let $C= \frac{\varepsilon}{\delta\sqrt{\delta^2+1}}$. The claim follows from the following geometric observation. Notice from Figure \ref{fig:geometric2} that, since $A$ is contained in the shaded area, it suffices to show $d\geq \frac{r}{\delta}$. Since the right-angled triangles with side lengths $r$ and $r/t$, and $\ell$ and $Cr$ are similar, we have $\ell=C\sqrt{1+t^{-2}}r$, and therefore
\begin{equation*}
    d=\frac{r}{t}-C\sqrt{1+t^{-2}}r=\biggl(\frac{1}{t}-\frac{\varepsilon\sqrt{t^2+1}}{t\delta\sqrt{\delta^2+1}}\biggr)r\geq \biggl(\frac{\delta-\varepsilon}{t\delta}\biggr)r\geq \frac{r}{\delta}
\end{equation*}
as claimed.
\end{proof}

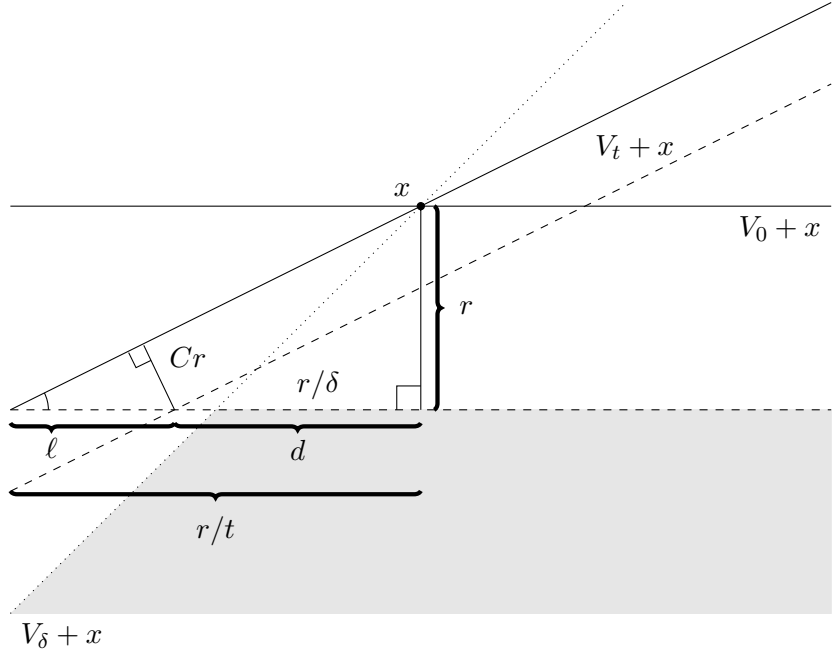
\begin{figure}
    \captionsetup{justification=justified}
    \centering
    \begin{tikzpicture}[scale=0.9]
    \filldraw[color=gray!20, fill=gray!20] (-3,-3) -- (6,-3) -- (6,-6) -- (-6,-6);

    \draw (-6,0) -- (6,0) node[anchor=north east] {$V_0+x$};
    \filldraw (0,0) circle (1.5pt) node[anchor=south east] {$x$} -- (0,-3);
    \draw[dashed] (-6,-3) -- (6,-3);
    \draw (-6,-3) -- (6,3) node[anchor=north west, pos=0.7] {$V_t+x$};
    \draw[dashed] (-6,-4.2) -- (6,1.8);
    \draw[dotted] (-6,-6) node[anchor=north west] {$V_{\delta}+x$} -- (3,3);
    \draw (-3.6,-3) -- node[anchor=south west] {$Cr$} (-4.05,-2.05);

    \node[anchor=south] at (-1.5,-3) {$r/\delta$};

    \draw[decorate, decoration={brace, raise=5pt}, ultra thick] (0,0) -- (0,-3)
    node[right=10pt,pos=0.5] {$r$};
    \draw[decorate, decoration={brace, raise=5pt, aspect=0.75}, ultra thick] (-3.6,-3) -- (-6,-3)
    node[below=6pt,pos=0.75] {$\ell$};
    \draw[decorate, decoration={brace, raise=5pt}, ultra thick] (0,-3) -- (-3.6,-3)
    node[below=7pt,pos=0.5] {$d$};
    \draw[decorate, decoration={brace, raise=5pt}, ultra thick] (0,-4) -- (-6,-4)
    node[below=10pt,pos=0.5] {$r/t$};

    \coordinate (A) at (0,0);
    \coordinate (B) at (0,-3);
    \coordinate (C) at (-6,-3);
    \coordinate (D) at (-4.05,-2.05);
    \coordinate (E) at (-3.6,-3);
    \draw pic[draw] {angle = B--C--A};
    \MarkRightAngle[size=10pt](A,B,C);
    \MarkRightAngle[size=7pt](C,D,E);
    \end{tikzpicture}
    \caption{The geometric observation of Lemma \ref{lemma:geometric2}.}
    \label{fig:geometric2}
\end{figure}

In the remaining lemmas we use the following notation. For $n\in\N$ we let $\iii^L_n=\iii_{\max}|_{2n-2}11$, $\iii^R_n=\iii_{\max}|_{2n-2}21$, and $\iii^{RR}_{n}=\iii_{\max}|_{2n-2}22$. The geometric interpretation of this is that $\varphi_{\iii^L_n}(\Xl)$ corresponds to the cylinder adjacent to $\varphi_{\iii_{\max}|_{2n}}(\Xl)$ on the left-hand side, $\varphi_{\iii^R_n}(\Xl)$ to the cylinder on the right-hand side, and $\varphi_{\iii^{RR}_n}(\Xl)$ to the cylinder adjacent to $\varphi_{\iii^R_n}(\Xl)$ on the right-hand side. In the following, for any $x\in\R^2$, we write
\begin{equation*}
    |x|_y= |\proj_{V_{\infty}}(x)|.
\end{equation*}
This is a seminorm on $\R^2$ and it becomes a norm if we identify points with equal $y$-coordinates. The seminorm $| \cdot |_y$ induces a translation invariant pseudometric in the usual way. For $x\in\R^2$ and $E\subset \R^2$ we let $\dist_y(x,E)= \inf_{z\in E}|x-z|_y$. For every $x\in\R^2$ and $r>0$, we have
\begin{equation*}
    B_y(x,r)=[V_0+x]_r,
\end{equation*}
where $B_y(x,r)$ denotes the open ball with center $x$ and radius $r$ in the pseudometric induced by $| \cdot |_y$. In the sequel, we will repeatedly use the following simple fact: If $\fii \colon \R^2 \to \R^2$, $\varphi(x)=Ax+t$, is an affine map, then
\begin{equation}\label{eq:affine-translate}
    |\varphi(x_1)-\varphi(x_2)|_y=|A(x_1-x_2)|_y.
\end{equation}
for all $x_1,x_2\in\R^2$.

\begin{lemma}\label{lemma:vertical-distance}
    There exists a constant $C=C(\lambda)>0$ such that for any integer $n>1$ and $\iii\in\{\iii_n^L,\iii_n^R,\iii_n^{RR}\}$ we have
    \begin{equation*}
        \dist_y(\overline{x}_{\max},\varphi_{\iii}(\Xl))\geq C\lambda^{2n}.
    \end{equation*}
    Moreover, if $\iii=\iii_n^L$, then the claim holds also with $n=1$.
\end{lemma}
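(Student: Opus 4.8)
The plan is to exploit the exact self-similarity of $\Xl$ at its maximizer. Since $\iii_{\max}=\overline{12}$, the point $\overline{x}_{\max}=(\tfrac13,M_\lambda)$ is the fixed point of $\varphi_{12}=\varphi_1\circ\varphi_2$, hence of $\psi_n:=\varphi_{(12)^{n-1}}=(\varphi_{12})^{n-1}$. As $\iii_n^L=(12)^{n-1}11$, $\iii_n^R=(12)^{n-1}21$, and $\iii_n^{RR}=(12)^{n-1}22$, we may write $\varphi_{\iii_n^\bullet}(\Xl)=\psi_n(\varphi_{\iii_1^\bullet}(\Xl))$ for $\bullet\in\{L,R,RR\}$, where $\iii_1^L=11$, $\iii_1^R=21$, $\iii_1^{RR}=22$. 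Letting $s_n$ denote the lower-left entry of $A_{(12)^{n-1}}$ from \eqref{eq:takagi-matrix} and using $\psi_n(\overline{x}_{\max})=\overline{x}_{\max}$ together with \eqref{eq:affine-translate}, I would reduce the vertical distance to
\begin{equation*}
  \dist_y(\overline{x}_{\max},\varphi_{\iii_n^\bullet}(\Xl))=\inf_{q\in\varphi_{\iii_1^\bullet}(\Xl)}\bigl(s_n(\tfrac13-q_1)+\lambda^{2n-2}(M_\lambda-q_2)\bigr),
\end{equation*}
where $q=(q_1,q_2)$. The quantity in the infimum is exactly $M_\lambda$ minus the $y$-coordinate of $\psi_n(q)\in\Xl$, so it is nonnegative; the task is to bound it below by a multiple of $\lambda^{2n}$.

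The next step is to estimate the shear $s_n$. A direct computation from \eqref{eq:takagi-matrix} gives $s_n=\lambda^{2n-2}\sum_{k=1}^{2n-2}\rho^k$ with $\rho=-\tfrac{1}{2\lambda}\in(-1,0)$, whence summing the geometric series yields $|s_n|=\lambda^{2n-2}\tfrac{1-|\rho|^{2n-2}}{2\lambda+1}$. Thus $s_n<0$ and $|s_n|$ has the same order $\lambda^{2n}$ as the target, with the two-sided control $\tfrac{2\lambda-1}{4\lambda^2}\lambda^{2n-2}\le|s_n|\le\tfrac{1}{2\lambda+1}\lambda^{2n-2}$ (the lower bound for $n\ge2$, using $|\rho|^{2n-2}\le|\rho|^2$).

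For $\bullet\in\{R,RR\}$ every $q\in\varphi_{\iii_1^\bullet}(\Xl)$ satisfies $q_1\ge\tfrac12>\tfrac13$, so $\tfrac13-q_1\le-\tfrac16$; since $s_n<0$ the shear term is positive, $s_n(\tfrac13-q_1)\ge\tfrac16|s_n|$, while $M_\lambda-q_2\ge0$. Hence the infimum is at least $\tfrac16|s_n|\ge\tfrac{2\lambda-1}{24\lambda^2}\lambda^{2n-2}$ for $n\ge2$. This also explains why $n=1$ is excluded for these words: there $s_1=0$, and indeed $\varphi_{21}(\Xl)$ contains the second maximizer $\pi(\overline{21})=(\tfrac23,M_\lambda)$ by Lemma \ref{lemma:mishura-schied}, so the distance vanishes.

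The delicate case is $\bullet=L$, and it is the main obstacle: here $q_1\le\tfrac14<\tfrac13$ makes the shear term negative, so it competes with the vertical drop and the crude bound $M_\lambda-q_2\ge0$ is useless. The resolution is to feed in the linear lower bound underlying Lemma \ref{lemma:empty-cone}: on $[0,\tfrac12]$ the graph lies below the slope-one line through $\overline{x}_{\max}$, i.e.\ $M_\lambda-T_\lambda(q_1)\ge\tfrac13-q_1$. This bounds the integrand below by $(\tfrac13-q_1)(\lambda^{2n-2}-|s_n|)$, and since $\lambda^{2n-2}-|s_n|\ge\tfrac{2\lambda}{2\lambda+1}\lambda^{2n-2}>0$ by the upper estimate on $|s_n|$ and $\tfrac13-q_1\ge\tfrac1{12}$ on $[0,\tfrac14]$, the infimum is at least $\tfrac{1}{12}\tfrac{2\lambda}{2\lambda+1}\lambda^{2n-2}$ for every $n\ge1$. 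Taking $C$ to be the smallest of the three resulting constants, each a positive function of $\lambda$ only, would complete the proof.
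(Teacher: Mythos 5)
Your proposal is correct, and it is organized differently from the paper's proof. You conjugate by the fixed-point map $\psi_n=\varphi_{(12)^{n-1}}$, which fixes $\overline{x}_{\max}$, thereby reducing all three words uniformly to the level-one cylinders $11$, $21$, $22$ with a single shear correction $s_n$; the geometric series computation $|s_n|=\lambda^{2n-2}\tfrac{1-(2\lambda)^{-(2n-2)}}{2\lambda+1}$ is right, as is the sign analysis: for $21$ and $22$ the positive shear term $|s_n|(q_1-\tfrac13)\geq\tfrac16|s_n|$ alone suffices since $q_1\geq\tfrac12$ and $M_\lambda-q_2\geq 0$, while for $11$ the slope-one bound $M_\lambda-T_\lambda(q_1)\geq\tfrac13-q_1$ (which is exactly the content of the proof of Lemma \ref{lemma:empty-cone}) beats the negative shear because $|s_n|\leq\tfrac{1}{2\lambda+1}\lambda^{2n-2}<\lambda^{2n-2}$. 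The paper instead works directly at level $2n$: for $\iii_n^L$ it introduces the corner point $p_n=\varphi_{\iii_{\max}|_{2n}}(0,M_\lambda)$, pushes forward the triangle excluded by Lemma \ref{lemma:empty-cone}, computes two vertical distances from the explicit matrix entries in \eqref{eq:takagi-matrix}, and transfers the estimate from $p_n$ to $\overline{x}_{\max}$ by the reverse triangle inequality; for $\iii_n^R$ it computes the vertical distances to the images of the two maximizers $\overline{x}_{\max}$, $\overline{y}_{\max}$ and disposes of $\iii_n^{RR}$ by a monotonicity remark. The underlying inputs are the same in both arguments (the matrix formula \eqref{eq:takagi-matrix}, the fixed-point relation $\varphi_{12}(\overline{x}_{\max})=\overline{x}_{\max}$, and the slope-one line through $\overline{x}_{\max}$), but your route buys several things: one computation of $s_n$ covers all three words at once and for all $n$; you avoid having to identify where the vertical distance to $\varphi_{\iii_n^R}(\Xl)$ is attained, which in the paper requires the extra claim that it suffices to test the two maximizers; and the excluded case is explained structurally, since $s_1=0$ and $\varphi_{21}(\Xl)$ contains the second maximizer $\pi(\overline{21})=(\tfrac23,M_\lambda)$ of Lemma \ref{lemma:mishura-schied}, so the distance genuinely vanishes at $n=1$ for $\iii_1^R$, whereas your $L$-case bound $\tfrac{1}{12}\tfrac{2\lambda}{2\lambda+1}\lambda^{2n-2}$ is valid already for $n=1$ exactly as the lemma asserts. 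All constants you produce are positive for $\tfrac12<\lambda<1$, so taking $C$ to be their minimum (absorbing the harmless factor $\lambda^{-2}$) completes the proof as you say.
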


\begin{proof}
    Let us denote $p_n= \varphi_{\iii_{\max}|_{2n}}(0,M_{\lambda})=\varphi_{\iii_{n}^L}(1,M_{\lambda})$, let $A=[0,1]\times [0,M_\lambda]$ and let $T$ be the open triangle determined by the points $(1,M_\lambda)$, $(\frac{2}{3},M_\lambda)$, and $(1,M_\lambda-\frac{1}{3})$. Note that the affine map $\varphi_{\iii^L_n}$ maps $T$ to a triangle determined by the points $\varphi_{\iii^L_n}(1,M_\lambda)=p_n$, $\varphi_{\iii^L_n}(\frac{2}{3},M_\lambda)$, and $\varphi_{\iii^L_n}(1,M_\lambda-\frac{1}{3})$. By Lemma \ref{lemma:empty-cone}, we have $\Xl\subset A\setminus T$ which gives $\varphi_{\iii^L_n}(\Xl)\subset \varphi_{\iii^L_n}(A\setminus T)$, see Figure \ref{fig:crux_fig} for illustration. {Using \eqref{eq:takagi-matrix}, the non-zero, non-diagonal element of the matrix $A_{\iii_n^L}$ is $$
\biggl((2\lambda)^{-1}+(2\lambda)^{-2}+\sum_{k=3}^{2n}(-1)^k(2\lambda)^{-k}\biggr)\lambda^{2n}=\frac{(2\lambda)^{-2n}+\lambda+1}{2\lambda^2+\lambda}\lambda^{2n}\geq\frac{\lambda+1}{2\lambda^2+\lambda}\lambda^{2n}>0,
$$
} and therefore
    \begin{equation*}
        \dist_y(p_n,\varphi_{\iii^L_n}(\Xl))\geq \min\{|p_n-\varphi_{\iii^L_n}(\tfrac{2}{3},M_\lambda)|_y,|p_n-\varphi_{\iii^L_n}(1,M_\lambda-\tfrac{1}{3})|_y\}.
    \end{equation*}
    \begin{figure}
    	\centering
    	\captionsetup{justification=justified}
    	\includegraphics[width=0.66\linewidth]{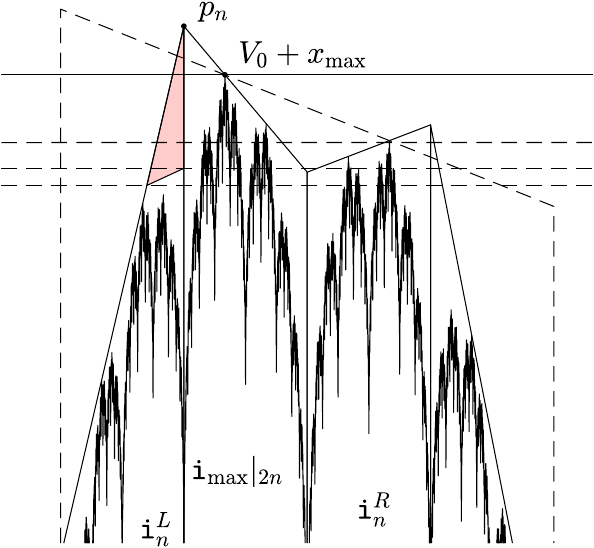}
    	\caption{Illustration of the proof of Lemma \ref{lemma:vertical-distance}.}
    	\label{fig:crux_fig}
    \end{figure}
 Using equations \eqref{eq:affine-translate} and \eqref{eq:takagi-matrix}, we have
    \begin{align*}
        |p_n-\varphi_{\iii^L_n}(1,M_\lambda-\tfrac{1}{3})|_y&=|\varphi_{\iii_{n}^L}(1,M_{\lambda})-\varphi_{\iii^L_n}(1,M_\lambda-\tfrac{1}{3})|_y\\
        &=|A_{\iii^L_n}(0,\tfrac{1}{3})|_y=\frac{\lambda^{2n}}{3}
    \end{align*}
    and, similarly,
    \begin{align*}
        |p_n-\varphi_{\iii^L_n}(\tfrac{2}{3},M_\lambda)|_y&=|\varphi_{\iii_{n}^L}(1,M_{\lambda})-\varphi_{\iii^L_n}(\tfrac{2}{3},M_\lambda)|_y=|A_{\iii^L_n}(\tfrac{1}{3},0)|_y\geq\frac{\lambda+1}{6\lambda^2+3\lambda}\lambda^{2n}.
    \end{align*}
    Therefore, we have
    \begin{equation}\label{eq:pn_to_L}
        \dist_y(p_n,\varphi_{\iii^L_n}(\Xl))\geq\min\biggl\{\frac{1}{3},\frac{\lambda+1}{6\lambda^2+3\lambda}\biggr\}\lambda^{2n}.
    \end{equation}
    On the other hand, it follows by a simple calculation that $\varphi_{12}(\overline{x}_{\max})=\overline{x}_{\max}$, and therefore, by induction, we have $\varphi_{\iii_{\max}|_{2n}}(\overline{x}_{\max})=\overline{x}_{\max}$ for all $n\in\N$. By \eqref{eq:affine-translate} and \eqref{eq:takagi-matrix}, we have
    \begin{align*}
        |p_n-\overline{x}_{\max}|_y &=|\varphi_{\iii_{\max}|_{2n}}(0,M_\lambda)-\varphi_{\iii_{\max}|_{2n}}(\tfrac{1}{3},M_\lambda)|_y=|A_{\iii_{\max}}\left(\tfrac{1}{3},0\right)|_y\\
        &=\biggl|\sum_{k=1}^{2n}(-1)^{k}(2\lambda)^{-k}\biggr|\frac{\lambda^{2n}}{3}=\frac{1-(2\lambda)^{-2n}}{3(2\lambda+1)}\lambda^{2n}\leq \frac{\lambda^{2n}}{3(2\lambda+1)}.
    \end{align*}
    By combining this with (\ref{eq:pn_to_L}) and applying the reverse triangle inequality, we get
    \begin{align*}
        \dist_y(\overline{x}_{\max},\varphi_{\iii_n^L}(\Xl))&\geq \dist_y(p_n,\varphi_{\iii_n^L}(\Xl))-|p_n-\overline{x}_{\max}|_y\\
        &\geq \biggl(\min\biggl\{\frac{1}{3},\frac{\lambda+1}{6\lambda^2+3\lambda}\biggr\}-\frac{1}{3(2\lambda+1)}\biggr)\lambda^{2n}\\
        &=\min\biggl\{\frac{2\lambda}{6\lambda+3},\frac{1}{6\lambda^2+3\lambda}\biggr\}\lambda^{2n},
    \end{align*}
    for any $n\in\N$, where $\min\{\frac{2\lambda}{6\lambda+3},\frac{1}{6\lambda^2+3\lambda}\}>0$ for all $\tfrac12 < \lambda < 1$.

    It follows from the construction, that $\dist_y(\overline{x}_{\max},\varphi_{\iii_n^R}(\Xl))\leq \dist_y(\overline{x}_{\max},\varphi_{\iii_n^{RR}}(\Xl))$ (see Figure \ref{fig:crux_fig}), so to finish the proof, it is enough to prove the claim for $\iii_n^R$. 
    {Observe that
$$
\dist_y(\overline{x}_{\max},\varphi_{\iii_n^R}(\Xl))=\min\{|\overline{x}_{\max}-\varphi_{\iii_n^R}(\overline{x}_{\max})|_y,|\overline{x}_{\max},\varphi_{\iii_n^R}(\overline{y}_{\max})|_y\},
$$
where
\begin{align*}
|\overline{x}_{\max}-\varphi_{\iii_n^R}(\overline{x}_{\max})|_y&=|\varphi_{\iii_{\max}|_{2n-2}}(\overline{x}_{\max})-\varphi_{\iii_n^R}(\overline{x}_{\max})|_y\\
&=|A_{\iii_{\max}|_{2n-2}}(\overline{x}_{\max}-\varphi_{21}(\overline{x}_{\max}))|_y\\
&=\left|A_{\iii_{\max}|_{2n-2}}(\tfrac14,(1-\lambda^2)M_\lambda+\tfrac{1}{12}-\tfrac{\lambda}{6})\right|_y\\
&=\biggl|\frac14\sum_{k=1}^{2n-2}(-1)^{k}(2\lambda)^{-k}+(1-\lambda^2)M_\lambda+\tfrac{1}{12}-\tfrac{\lambda}{6}\biggr|\lambda^{2n-2}\\
&\geq(-\tfrac{1}{8\lambda}+\tfrac{1+\lambda}{3}+\tfrac{1}{12}-\tfrac{\lambda}{6})\lambda^{2n-2}
\end{align*}
and, similarly,   
\begin{align*}
|\overline{x}_{\max}-\varphi_{\iii_n^R}(\overline{y}_{\max})|_y&=\left|A_{\iii_{\max}|_{2n-2}}(\tfrac16,(1-\lambda^2)M_\lambda+\tfrac{1}{6}-\tfrac{\lambda}{3})\right|_y\\
&=\biggl|\frac16\sum_{k=1}^{2n-2}(-1)^{k}(2\lambda)^{-k}+(1-\lambda^2)M_\lambda+\tfrac{1}{6}-\tfrac{\lambda}{3}\biggr|\lambda^{2n-2}\\
&\geq(-\tfrac{1}{12\lambda}+\tfrac{1+\lambda}{3}+\tfrac{1}{6}-\tfrac{\lambda}{3})\lambda^{2n-2}.
\end{align*}
Simple calculations show that in both of the above inequalities, the right-hand sides for $n=1$ are uniformly positive for $\tfrac12 < \lambda < 1$. Hence, the claim follows.}
\end{proof}

\begin{lemma}\label{lemma:crux}
    Let $x\in\R^2$ be such that $\Xl\cap (V_0+x)=\emptyset$. Then there are constants $C,K>0$ depending only on $\lambda$ such that
    \begin{equation*}
        \#\Sigma_{n}([V_0+x]_{C\lambda^{n}})\leq K,
    \end{equation*}
    for all $n\in\N$.
\end{lemma}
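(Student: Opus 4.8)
The plan is to exploit the global containment $\Xl\subset[0,1]\times[0,M_\lambda]$ together with the self-affine nesting $\fii_\iii(\Xl)\subset\Xl$ to convert the statement into a question about how fast cylinders recede from the extremizers of $T_\lambda$. Since $\Xl$ is the graph of the continuous function $T_\lambda\colon[0,1]\to[0,M_\lambda]$, the hypothesis $\Xl\cap(V_0+x)=\emptyset$ forces the height $y_0$ of $x$ to satisfy either $y_0>M_\lambda$ or $y_0<0$. I will treat $y_0>M_\lambda$ in detail; the case $y_0<0$ is analogous and simpler, and is the main obstacle only in a bookkeeping sense.

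First I would record two reductions. The mirror symmetry $T_\lambda(1-x)=T_\lambda(x)$ gives $R\fii_\iii=\fii_{\tilde\iii}R$, where $R(x,y)=(1-x,y)$ and $\tilde\iii$ swaps the symbols $1$ and $2$; since $R$ preserves every horizontal strip, the number of words in $\Sigma_n([V_0+x]_{C\lambda^n})$ beginning with $1$ equals the number beginning with $2$, so it suffices to bound the former by a constant. Moreover, because $\fii_\iii(\Xl)\subset\Xl\subset\R\times[0,M_\lambda]$, every cylinder has top $y$-coordinate at most $M_\lambda$; hence a level-$m$ cylinder $\fii_\iii(\Xl)$ can meet $[V_0+x]_{C\lambda^m}$ only if its top lies within $C\lambda^m$ of $y_0>M_\lambda$, i.e. only if it climbs to within $C\lambda^m$ of the maximal value $M_\lambda$.

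The core of the argument is a tree decomposition along the peak path. By Lemma \ref{lemma:mishura-schied} the value $M_\lambda$ is attained only at $\overline{x}_{\max}$ and $\overline{y}_{\max}$, with codings $\iii_{\max}=\overline{12}$ and $\jjj_{\max}=\overline{21}$; in the subtree of words beginning with $1$ the relevant maximizer is $\overline{x}_{\max}$. Every such word is either a prefix of $\iii_{\max}$ or a descendant of exactly one off-path sibling, and a direct check identifies these siblings as $\iii_n^L=\iii_{\max}|_{2n-2}11$ at the even depths $2n$ and $(12)^n2=\iii_{\max}|_{2n}2$ at the odd depths $2n+1$, the latter having children exactly $\iii_{n+1}^R$ and $\iii_{n+1}^{RR}$. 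Choosing $C$ strictly smaller than the constant $C_0=C_0(\lambda)$ of Lemma \ref{lemma:vertical-distance}, I claim the entire subtree rooted at any of $\iii_n^L,\iii_n^R,\iii_n^{RR}$ is disjoint from the strip: by Lemma \ref{lemma:vertical-distance} and $\fii_\iii(\Xl)\subset\Xl$ such a cylinder, and hence each of its descendants, has top at most $M_\lambda-C_0\lambda^{2n}$, whereas meeting $[V_0+x]_{C\lambda^m}$ at a level $m\ge 2n$ would require top exceeding $M_\lambda-C\lambda^m\ge M_\lambda-C\lambda^{2n}>M_\lambda-C_0\lambda^{2n}$, a contradiction. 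Consequently the subtree under each even-depth sibling, and under each child of each odd-depth sibling, is entirely excluded, so at level $m$ the only surviving words beginning with $1$ are the prefix $\iii_{\max}|_m$ and, when $m$ is odd, the single node $(12)^{(m-1)/2}2$. This bounds the surviving count by $2$, and with the symmetry reduction gives $\#\Sigma_n([V_0+x]_{C\lambda^n})\le 4$.

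For $y_0<0$ the same scheme applies with the minimizers $\pi(\overline1)=(0,0)$ and $\pi(\overline2)=(1,0)$ in place of the maximizers; the off-path siblings are $\fii_{1^{d-1}2}(\Xl)$ and $\fii_{2^{d-1}1}(\Xl)$, and a short direct computation replacing Lemma \ref{lemma:vertical-distance}, using $\fii_1^{d-1}(1,0)=(2^{-(d-1)},T_\lambda(2^{-(d-1)}))$ and $T_\lambda(2^{-k})\asymp\lambda^k$, shows these siblings stay at vertical distance $\gtrsim\lambda^d$ above the corresponding minimum, after which the identical exclusion argument leaves only boundedly many words. The genuinely non-elementary input is Lemma \ref{lemma:vertical-distance} in the case $y_0>M_\lambda$: the difficulty is that, unlike the minima which are attracting fixed points with a transparent self-similar neighbourhood, the maximum is approached through the fractal structure, so one needs the sharp receding rate $\lambda^{2n}$ of the cylinders flanking the peak in order to match the strip half-width $C\lambda^m$.
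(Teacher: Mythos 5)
Your proof is correct and takes essentially the same approach as the paper's: the same symmetry reduction to words beginning with $1$, the same case split according to which side of the line the graph lies on, the elementary treatment near the minimizers for $y_0<0$, and for $y_0>M_\lambda$ the same pruning of the coding tree along $\iii_{\max}=\overline{12}$ with Lemma \ref{lemma:vertical-distance} excluding the sibling cylinders $\iii_n^L$, $\iii_n^R$, $\iii_n^{RR}$. The only difference is cosmetic: you run the exclusion at every level (retaining the odd-depth node $\iii_{\max}|_{2n}2$ as a survivor, giving the explicit bound $4$), whereas the paper inducts over the even levels $2n$ only and absorbs the odd levels into the constant.
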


\begin{proof}
It suffices to find a constant $C>0$ such that
\begin{equation}\label{eq:reduction}
    \#\Sigma_{2n}^1([V_0+x]_{C\lambda^{2n}})\leq K,
\end{equation}
for all $n\in\N$, since then, by symmetry, for any $n\in\N$ we have $\#\Sigma_{n}([V_0+x]_{C\lambda^{n}})\leq 4K$. Let us first assume that $\Xl\subset H^+(x,V_0)$, that is, the line $V_0+x$ lies below $\Xl$. By induction, it is easy to see that for any $C\leq \frac{1}{2\lambda}$ the set $\Sigma_{2n}^1([V_0+x]_{C\lambda^{2n}})$ contains at most the word $\iii_1$.

For the other case $\Xl\subset H^-(x,V_0)$ we show by induction that for every $n\in\N$ there is a constant $C>0$ such that the set $\Sigma_{2n}^1([V_0+x]_{C\lambda^{2n}})$ contains at most the word $\iii_{\max}|_{2n}$. By Lemma \ref{lemma:vertical-distance}, we may choose a constant $C>0$ such that for any $n>1$ and $\iii\in\{\iii_n^L,\iii_n^R,\iii_n^{RR}\}$, we have $\dist_y(\overline{x}_{\max},\varphi_{\iii}(\Xl))\geq C\lambda^{2n}$. Moreover, if $n=1$, then we have $\dist_y(\overline{x}_{\max},\varphi_{\iii^L_1}(\Xl))\geq C\lambda^{2}$ and therefore, $[V_0+\overline{x}_{\max}]_{C\lambda^{2}}=B_y(\overline{x}_{\max},C\lambda^2)$ does not intersect the set $\varphi_{\iii_1^L}(\Xl)$. Since the only words in $\Sigma^1_2$ are $\iii^L_1$ and $\iii_{\max}|_2$, we see that $\Sigma_{2}^1([V_0+\overline{x}_{\max}]_{C\lambda^{2}})$ contains at most the word $\iii_{\max}|_{2}$. Since $\Xl$ lies below $V_0+\overline{x}_{\max}$, this is also true for any $x\in\R^2$ satisfying $\proj_{V_{\infty}}(x)>\proj_{V_{\infty}}(\overline{x}_{\max})$.

Now assume that the set $\Sigma_{2(n-1)}^1([V_0+x]_{C\lambda^{2(n-1)}})$ contains at most the word $\iii_{\max}|_{2(n-1)}$. Since $C\lambda^{2n}<C\lambda^{2(n-1)}$, the only cylinders that could intersect $[V_0+x]_{C\lambda^{2n}}$ are the ones corresponding to the children of $\iii_{\max}|_{2(n-1)}$, which are precisely the cylinders determined by $\iii_{\max}|_{2n}$, $\iii_n^L$, $\iii_n^R$, and $\iii_n^{RR}$. By relying on Lemma \ref{lemma:vertical-distance}, we see that $[V_0+x]_{C\lambda^{2n}}=B_y(x,C\lambda^{2n})$ does not intersect $\varphi_{\iii_n^L}(\Xl)$, $\varphi_{\iii_n^R}(\Xl)$ or $\varphi_{\iii_n^{RR}}(\Xl)$, which finishes the proof.
\end{proof}

\begin{proof}[Proof of Lemma \ref{lemma:cylinder-bound}]
Let $\Theta\geq K_{\lambda}$, $V_t\in \RP_{\Theta}$, and $x\in\R^2$ be such that $\Xl\cap (V+x)=\emptyset$. The case $t=0$ follows from Lemma \ref{lemma:crux}, so by symmetry we may assume that $t>0$. We first consider the case $\Xl\subset H^+(x,V_0)$. Without loss of generality, we may assume that the first coordinate of $x$ is $1$ and note that then $\Xl\subset H^+(x,V_0)\cap H^-(x,V_{\infty})$. By Lemma \ref{lemma:crux}, there are constants $c_1,K>0$ such that $\#\Sigma_{n}([V_0+x]_{c_1\lambda^{n}})\leq K$ for all $n\in\N$ and therefore, by Lemma \ref{lemma:geometric1}, there is a constant $c_2>0$ such that $\#\Sigma_{n}([V_t+x]_{c_1c_2\lambda^{n}})\leq K$ for all $n\in\N$.

For the case $\Xl\subset H^-(x,V_0)$, we let $t_0=0$ and for any $n\in\N$ we define $t_n=(2\lambda)^n\sum_{k=1}^n(2\lambda)^{-k}$. Further, let $\delta_0=0$ and
\begin{equation*}
    \delta_n=\frac{t_{n-1}+t_n}{2}.
\end{equation*}
Clearly $\delta_n$ is strictly increasing with $n$. Let $k\geq1$ be the unique integer satisfying $\delta_{k-1}\leq t < \delta_k$. Since $t\leq \Theta$, there is a natural number $N=N(\Theta,\lambda)$ such that $k\leq N$. By a geometric argument similar to the proof of Lemma \ref{lemma:crux}, it is possible to show that there is a constant $C=C(\Theta)>0$ such that $\Xl\cap [V_t+x]_{C}=\varphi_{\iii_1|_k}(\Xl)\cap [V_t+x]_{C}$. Therefore, by Lemma \ref{lemma:pull-back-cylinder}, for any $n>k$ and $r<C$ we have 
\begin{align}\label{eq:pull-back-bound}
    \Sigma_n([V+x]_{r})&=\{\iii_1|_k\jjj\in\Sigma_{n}\colon \varphi_{\jjj}(\Xl)\cap (\varphi_{\iii_1|_k}^{-1}([V_t+x]_{r}))\ne\emptyset\}\\
    &\subset\{\iii_1|_k\jjj\in\Sigma_n\colon \jjj\in\Sigma_{n-k}([V_{t_{\iii_1|_k}}+y]_{c\lambda^{-k}r})\}.\nonumber
\end{align}
By Lemma \ref{lemma:pull-back-cylinder} and the choice of $k$, we have
\[
t_{\iii_1|k}=-\sum_{n=1}^{k}(2\lambda)^{-n}+(2\lambda)^{-k}t
\]
and since $\delta_{k-1}\leq t<\delta_k$ we get
\begin{equation*}
    -\tfrac{1}{2}(2\lambda)^{-k}\leq t_{\iii_1|_k}< \tfrac{1}{2}(2\lambda)^{-(k+1)},
\end{equation*}
and so, we see that $|t_{\iii_1|_k}|\leq 1-\varepsilon$, where $\varepsilon=(1-\frac{1}{4\lambda})$. By symmetry, we may assume without loss of generality that $0<t_{\iii_1|_k}\leq 1-\varepsilon$ and that $\proj_{V_0}(y)=\frac{1}{3}$. Let $c,K>0$ be as in Lemma \ref{lemma:crux}. Then for any $n\geq N$, we have $\#\Sigma_{n-k}([V_0+y]_{c\lambda^{n-k}})\leq K$ and, by applying Lemmas \ref{lemma:empty-cone} and \ref{lemma:geometric2}, we see that there exists a constant $c>0$ such that
\begin{equation*}
    \#\Sigma_{n-k}([V_{t_{\iii_1|_k}}+y]_{c\lambda^{n-k}})\leq K.
\end{equation*}
In particular, using (\ref{eq:pull-back-bound}), we have
\begin{equation*}
    \#\Sigma_{n}([V+x]_{c\lambda^{n}})\leq\#\Sigma_{n-k}([V_{t_{\iii_1|_k}}+y]_{c\lambda^{n-k}})\leq K,
\end{equation*}
for all $n$ large enough such that $c\lambda^{n-N}< C$, and the claim follows.
\end{proof}


\end{document}